\documentclass[11pt,letter]{amsart}
\usepackage[utf8]{inputenc}
\usepackage{amsmath, calligra, mathrsfs}
\usepackage{enumerate, color, hyperref}
\usepackage{bbm}

\DeclareMathOperator{\Hom}{\mathscr{H}\text{\kern -3pt {\calligra\large om}}}

\textwidth=400pt \headsep=.7cm \evensidemargin=15pt
\oddsidemargin=15pt \leftmargin=0cm \rightmargin=0cm

\newtheorem{thm}{Theorem}[section]
\newtheorem{prop}[thm]{Proposition}

\newtheorem{lemma}[thm]{Lemma}

\newtheorem{hyp}[thm]{Hypothesis}

\theoremstyle{definition}
\newtheorem{rmk}[thm]{Remark}
\newtheorem{defi}[thm]{Definition}
\newtheorem{ex}[thm]{Exemple}

\usepackage{amsfonts}
\usepackage{amsmath}
\usepackage{amssymb}
\usepackage{graphics}
\usepackage{amscd}
\usepackage[all]{xy}
\usepackage{enumitem}
\usepackage{todonotes}
\usepackage{stackrel}

\newcommand{\beqn}{\begin{eqnarray*}}
\newcommand{\eeqn}{\end{eqnarray*}}
\newcommand{\beqa}{\begin{eqnarray}}
\newcommand{\eeqa}{\end{eqnarray}}

\DeclareMathOperator{\N}{\mathbb{N}}
\DeclareMathOperator{\Q}{\mathbb{Q}}

\DeclareMathOperator{\Z}{\mathbb{Z}}
\DeclareMathOperator{\V}{\mathbb{V}}
\DeclareMathOperator{\W}{\mathbb{W}}
\DeclareMathOperator{\G}{\mathbb{G}}
\DeclareMathOperator{\C}{\mathbb{C}}
\DeclareMathOperator{\A}{\mathbb{A}}

\DeclareMathOperator{\GL}{\mathrm{GL}}
\newcommand{\cO}{\mathcal{O}}

\newcommand{\dP}{\mathfrak{p}}
\newcommand{\cX}{\mathcal{X}}
\newcommand{\dX}{\mathfrak{X}}

\newcommand{\cN}{\mathfrak{n}}

\newcommand{\dH}{\mathfrak{H}}
\newcommand{\cG}{\mathcal{G}}
\newcommand{\cI}{\mathcal{I}}
\newcommand{\Spec}{\mathrm{Spec}}
\newcommand{\Spf}{\mathrm{Spf}}
\newcommand{\cU}{\mathcal{U}}

\newcommand{\cM}{\mathfrak{m}}
\newcommand{\cF}{\mathcal{F}}

\newcommand{\cR}{\mathcal{R}}
\newcommand{\cH}{\mathcal{H}}
\newcommand{\cW}{\mathcal{W}}
\newcommand{\dW}{\mathfrak{W}}
\newcommand{\Sym}{\mathrm{Sym}}

\newcommand{\M}{\mathrm{M}}

\makeatletter
\newcommand\figcaption{\def\@captype{figure}\caption}
\newcommand\tabcaption{\def\@captype{table}\caption}
\makeatother

\baselineskip=18pt plus1pt
\usepackage[left=1in, right=1in, top=1in, bottom=1in]{geometry}


\title[Finite slope Triple product $p$-adic $L$-functions]{Finite slope Triple product $p$-adic $L$-functions over totally real number fields}
	
\author{Santiago Molina}

\newcommand{\Addresses}{{
		\bigskip
		\footnotesize
		
		Santiago Molina; Universitat Polit\`{e}cnica de Catalunya\\Campus Nord, Calle Jordi Girona, 1-3, 08034 Barcelona, Spain\par\nopagebreak
		\texttt{santiago-molina@upc.edu}
		
}}


\begin{document}
	
	\begin{abstract} 
	We construct $p$-adic $L$-functions associated with triples of finite slope $p$-adic families of quaternionic automorphic eigenforms over totally real fieldson Shimura curves. These results generalize a previous construction, joint work with D.Barrera, performed in the ordinary setting. 
	\end{abstract}
	
  \maketitle
  	

\section{Introduction}

The theory of $p$-adic L-functions has grown tremendously during the last decades due to its important arithmetic applications. In particular, it has been essential for recent developments towards the Birch and Swinnerton-Dyer conjecture and its generalizations.
The seminal work of Kato provided deep results on the Birch and Swinnerton-Dyer conjecture in rank $0$ for twists of elliptic curves over $\Q$ by Dirichlet characters. More recently, similar results with twists by certain Artin representations of dimension 2 and 4 have been obtained by Bertolini-Darmon-Rotger in \cite{BDR15} and Darmon-Rotger in \cite{DR17} using analogous methods. In \cite{LLZ16}, \cite{KLZ17}, \cite{LZ}, \cite{AI17} and \cite{BSV2}  the development of such methods has been extended in different directions, as for example bounding certain Selmer groups and treating finite slope settings. In all these situations the 
$p$-adic $L$-function attached to a triple of $p$-adic families of modular forms has played a very important role. Such triple product $p$-adic $L$-functions were constructed in \cite{harris-tilouine01}, \cite{DR14}, \cite{hsieh18} in the ordinary case and in \cite{AI17} for Coleman Families, namely, in the finite slope situation. 

This paper deals with the construction of triple product $p$-adic L-functions associated with families of quaternionic automorphic forms in Shimura curves over totally real fields in the finite slope situation. In a previous paper with D. Barrera \cite{BM}, we constructed such $p$-adic L-functions in the ordinary setting.
In fact, we were able to construct $p$-adic families of finite slope quaternionic automorphic forms over totally real fields, but we had to restrict ourselves to the ordinary setting for the construction of the corresponding triple product $p$-adic L-functions. The reason for such a restriction was the inability to iterate the Gauss-Manin connection in the finite slope situation. In this article we manage to overcome these obstacles by following the strategy of Andreatta and Iovita in \cite{AI17}. 

Our setting is in many aspects analogous to the setting of elliptic modular forms treated in \cite{AI17}, hence many of the techniques will be similar. 
The main novelty of this paper with respect to \cite{AI17} is the extension of the $\chi$-iteration of the Gauss-Manin connection to the universal character. This allows us to extend the definition of triple product $p$-adic L-function to the whole weight space, removing the crucial \cite[Assumption 4.7]{AI17}. This also improves and complete the results in \cite{AI17} when $F = \Q$.


Our strategy relies on defining the space of nearly overconvergent families using the theory of formal vector bundles also introduced in \cite{AI17}. These formal vector bundles have been also used in \cite{Gra} to perform similar constructions in case of Hilbert modular schemes. However, we define a new type of formal vector bundle, already used in \cite{BM} to define $p$-adic families of finite slope quaternionic automorphic forms (\S \ref{ss:formal vector bundles}). During the elaboration of this paper, Andreatta and Iovita reported to the author that in parallel they had reached the same new definition of formal vector bundle.
This space of nearly overconvergent families admits a good definition of a Gauss-Manin connection (Theorem \ref{GMthm}).
We use the explicit description of the universal character given in \S \ref{expdescunivchar} to define a formal power series of endomorphisms liable to be the universal iteration of the Gauss-Manin connection. The choice of this new type of formal vector bundle ensures the convergence of such power series, as verified using $q$-expansions on Serre-Tate coordinates.
As mentioned before, these computations also apply in the setting over $\Q$, hence as a consequence one can define triple product $p$-adic L-function in the finite slope situation over $\Q$ without any restriction on the input weights (\cite[Assumption 5.4]{AI17}), improving the results known so far.

\subsection{Main result} Let $F$ be a totally real number field with integer ring $\cO_F$ and fix a real embedding $\tau_0$.  Let $p> 2$ be a prime number and let $\mathfrak{p}_0$ the prime over $p$ associated to $\tau_0$ under a fixed embedding $\bar\Q\hookrightarrow\C_p$. During all this paper we will suppose that $F_{\mathfrak{p}_0}=\Q_p$.

Let $B$ be a quaternion algebra over $F$ that splits at any prime over $p$, and it is ramified at any real place but $\tau_0$. In \cite{BM}, we construct $p$-adic families of automorphic forms on $(B\otimes\A)^\times$ as global sections of certain $p$-adic sheaves on certain unitary Shimura curves, extending the previous work of \cite{brasca13}. 
Such families are parametriced by the $(d+1)$-dimensional Iwasawa algebra $\Lambda=\Z_p[[(\cO_{F}\otimes\Z_p)^\times\times\Z_p^{\times}]]$.

Let $\mu_1, \mu_2, \mu_3$ be three finite slope $p$-adic families of automorphic eigenforms for $B$ and we denote by $\Lambda_1$, $\Lambda_2$ and $\Lambda_3$ the rings over which they are defined, respectively. Let  $x, y, z$ be a triple of classical points corresponding to $((\underline{k}_1, \nu_1), (\underline{k}_2, \nu_2), (\underline{k}_3, \nu_3))$, where $\mu_i\in\N$ and $\underline{k}_i\in\N^{[F:\Q]}$. We denote by $\pi_x$ the automorphic representation of $(B\otimes\A_F)^\times$ generated by the automorphic form obtained from the specialization of $\mu_1$ at $x$, and $\Pi_x$ the corresponding cuspidal automorphic representation of $\mathrm{GL}_2(\A_F)$. We also denote by $\alpha_{x}^{\dP}$ and $\beta_{x}^{\dP}$ the roots of the Hecke polynomial at $\mathfrak{p}$. In the same way we obtain $\pi_y$, $\Pi_y$, $\alpha_{y}^{\dP}$, $\beta_{y}^{\dP}$, $\pi_z$, $\Pi_z$, $\alpha_{z}^{\dP}$, $\beta_{z}^{\dP}$. 

We have (see Lemma \ref{l:construction} and Theorem \ref{t:interpolation} for more details):
\begin{thm}\label{t:main theorem 2} There exists $\mathcal{L}_p(\mu_1,\mu_2,\mu_3)\in \Lambda_1\hat{\otimes}\Lambda_2\hat{\otimes}\mathrm{Frac}(\Lambda_3)$ such that for each classical point $(x, y, z)$ corresponding to a triple $((\underline{k}_1, \nu_1), (\underline{k}_2, \nu_2), (\underline{k}_3, \nu_3))$ we have:
$$\mathcal{L}_p(\mu_1,\mu_2,\mu_3)(x, y, z)= K\cdot\left(\prod_{\dP\mid p}\frac{\mathcal{E}_\dP(x,y,z)}{\mathcal{E}_{\dP,1}(z)}\right)\cdot L\left(\frac{1-\nu_1-\nu_2-\nu_3}{2},\Pi_{x}\otimes\Pi_{y}\otimes\Pi_{z}\right)^{\frac{1}{2}},$$
where  $K$ is a non-zero constant depending of $(x, y, z)$, $\mathcal{E}_\dP(x,y,z)=$
\[
\left\{\begin{array}{lc}
\mbox{\small$(1-\beta_{x}^{\dP}\beta_{y}^{\dP}\alpha_{z}^{\dP}\varpi_{\dP}^{-\underline{m}_{\dP}-\underline{2}})(1-\alpha_{x}^{\dP}\beta_{y}^{\dP}\beta_{z}^{\dP}\varpi_{\dP}^{-\underline{m}_{\dP}-\underline{2}})(1-\beta_{x}^{\dP}\alpha_{y}^{\dP}\beta_{z}^{\dP}\varpi_{\dP}^{-\underline{m}_{\dP}-\underline{2}})(1-\beta_{x}^{\dP}\beta_{y}^{\dP}\beta_{z}^{\dP}\varpi_{\dP}^{-\underline{m}_{\dP}-\underline{2}})$},&\dP\neq\dP_0\\
\mbox{\small$(1-\alpha_{x}^{\dP_0}\alpha_{y}^{\dP_0}\beta_{z}^{\dP_0}p^{1-m_{0}})(1-\alpha_{x}^{\dP_0}\beta_{y}^{\dP_0}\beta_{z}^{\dP_0}p^{1-m_{0}})(1-\beta_{x}^{\dP_0}\alpha_{y}^{\dP_0}\beta_{z}^{\dP_0}p^{1-m_{0}})(1-\beta_{x}^{\dP_0}\beta_{y}^{\dP_0}\beta_{z}^{\dP_0}p^{1-m_{0}})$},&\dP=\dP_0
\end{array}\right., 
\]
\[
\mathcal{E}_{\dP,1}(z):=\left\{\begin{array}{lc} 
(1- (\beta_z^{\dP})^2\varpi_{\dP}^{-\underline{k}_{3,\dP}-\underline{2}})\cdot (1- (\beta_z^{\dP})^2\varpi_{\dP}^{-\underline{k}_{3,\dP}-\underline{1}}),&\dP\neq \dP_0,\\
(1- (\beta_z^{\dP_0})^{2}p^{-k_{3,\tau_0}})\cdot (1- (\beta_z^{\dP_0})^{2}p^{1-k_{3,\tau_0}}),&\dP= \dP_0,
\end{array}\right. 
\]
$m_{0}=\frac{k_{1,\tau_0}+k_{2,\tau_0}+k_{3,\tau_0}}{2}\geq 0$, $\underline{m}_{\dP}=\frac{\underline{k}_{1,\dP}+\underline{k}_{2,\dP}+\underline{k}_{3,\dP}}{2}= \left(\frac{k_{1,\tau}+k_{2,\tau}+k_{3,\tau}}{2}\right)_{\tau \sim\dP}$ and $\tau \sim\dP$ means real embeddings $\tau$ corresponding to embeddings $F_\dP\hookrightarrow\C_p$ through $\iota_p$.
\end{thm}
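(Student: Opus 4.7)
The plan is to follow the Andreatta--Iovita strategy of \cite{AI17}, adapted to quaternionic automorphic forms over the totally real field $F$ and extended to the whole weight space via the new formal vector bundle of \S\ref{ss:formal vector bundles}. The $p$-adic $L$-function will be defined as the value of a trilinear form on $p$-adic families: apply a ``universal iterated Gauss--Manin connection'' $\nabla^{\underline{s}}$ to the first family $\mu_1$, multiply by $\mu_2$, and project the result onto the finite-slope $\mu_3$-eigencomponent via a pairing with a dual companion of $\mu_3$. This last projection involves the inversion of a non-zero element of $\Lambda_3$ (the generic ``denominator'' of the finite-slope projector), which accounts for the factor $\mathrm{Frac}(\Lambda_3)$ in the target $\Lambda_1\hat{\otimes}\Lambda_2\hat{\otimes}\mathrm{Frac}(\Lambda_3)$.

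The technical heart is the construction of the operator $\nabla^{\underline{s}}$. I would proceed in three steps. First, define nearly overconvergent families using the formal vector bundle of \S\ref{ss:formal vector bundles}; this bundle is strictly smaller than the one used in \cite{AI17}, which is what will guarantee convergence later. Second, extend the Gauss--Manin connection of Theorem~\ref{GMthm} to this space. Third, using the explicit description of the universal character from \S\ref{expdescunivchar}, write $\nabla^{\underline{s}}$ as a formal power series of endomorphisms and verify its convergence by transferring to Serre--Tate coordinates, where the $q$-expansions of the individual summands can be estimated directly. The calibration of the new formal vector bundle is precisely what makes this series converge over all of $\Lambda$, without the weight restriction of \cite[Assumption~4.7]{AI17}. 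I expect this convergence step to be the main obstacle, and it is the step most distinctive of the present paper.

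Once $\nabla^{\underline{s}}$ is in hand, the element $\mathcal{L}_p(\mu_1,\mu_2,\mu_3)$ is assembled using the trilinear pairing described above (Lemma~\ref{l:construction}), and the interpolation formula is verified by specialising at a classical triple $(x,y,z)$. The specialisation becomes a Petersson-type integral which, via an Ichino-type formula as already used in \cite{hsieh18,BM}, factors as a product of local zeta integrals times the square root of the displayed central $L$-value. The archimedean and away-from-$p$ integrals are absorbed into the explicit constant $K$, while the integrals at each prime $\dP\mid p$, evaluated on the $p$-stabilisations corresponding to the chosen roots $\alpha, \beta$, produce the factors $\mathcal{E}_\dP(x,y,z)$; the asymmetry at $\dP_0$ reflects the fact that $F_{\dP_0}=\Q_p$ is the prime associated to the unique archimedean place at which $B$ is split, which plays a distinguished geometric role on the Shimura curve. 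The denominator $\mathcal{E}_{\dP,1}(z)$ is the standard adjoint Euler factor arising from the normalisation of the test vector in the $\mu_3$-eigenspace, and its combination with the finite-slope projector of the first paragraph gives the precise ratio $\mathcal{E}_\dP/\mathcal{E}_{\dP,1}$ appearing in Theorem~\ref{t:interpolation}.
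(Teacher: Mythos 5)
Your proposal matches the paper's strategy in all essential respects: you correctly identify the use of the new formal vector bundle $\V_Q(\mathcal{E},s_1,\dots,s_m)$ as the ingredient that makes the universal iterated Gauss--Manin operator $\nabla^{\bf s}$ converge (via the estimates in Serre--Tate coordinates of Proposition~\ref{pdiviter}), the construction of $\mathcal{L}_p$ via $H^{\dagger,\leq h}\bigl(\nabla^{\bf k_3}_{\bf k_1}\mu_1\cdot\mu_2\bigr)$ followed by the Petersson pairing against $\mu_z^*$ (Lemma~\ref{l:construction}), and the deduction of the interpolation formula by specialising and invoking the Ichino-type computation of \cite{BM,hsieh18} (Theorem~\ref{t:interpolation}). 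This is the same route the paper takes; the only imprecision is that the passage to $\mathrm{Frac}(\Lambda_3)$ reflects both the localisation at the ideal $I_m=\prod_{i=0}^m(w_n-i)$ required to invert $H^\dagger$ and the Petersson normalisation $\langle\mu_z^*,\mu_z\rangle$, not solely a ``finite-slope projector denominator'', but this does not affect the argument.
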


As mentioned before this result is analogous to \cite[Theorem 1.2]{BM} where the triple product $p$-adic L-function $\mathcal{L}_p(\mu_1,\mu_2,\mu_3)$ is constructed in the ordinary setting. We have been able to extend this result to the finite slope setting thanks to the strategy to $p$-adically interpolate of the integral powers of the Gauss-Manin connexion. 
We perform such $p$-adic interpolations as power series of endomorphisms in a well chosen space of nearly overconvergent modular forms. 

\subsubsection*{Acknowledgements.}
The author is supported in part by DGICYT Grant MTM2015-63829-P.
This project has received funding from the European Research Council
(ERC) under the European Union's Horizon 2020 research and innovation
programme (grant agreement No. 682152).

\section{Notations}
Let $\mathbb{A}$ be the adeles of $\Q$ and $\A_f$ the finite adeles. Let $F$ be a totally real field of degree $d= [F:\Q]$, $\mathcal{O}_{F}$ its ring of integers and $\Sigma_F$ the set of real embeddings of $F$. In all this paper we fix an embedding  $\tau_0 \in \Sigma_F$. We denote by $\underline{1}\in \Z[\Sigma_F]$ the element with each coordinate equals to $1$. For $x\in F^\times$ and $\underline{k}\in \Z[\Sigma_F]$ we put $x^{\underline{k}}= \prod_{\tau\in\Sigma_F}\tau(x)^{k_{\tau}}$.

We fix a prime number $p> 2$, and we identinfy $\Sigma_F$ with the set of the embeddings of $F$ in $\overline{\Q}_p$ once we fix an embedding $\iota_p:\bar\Q\hookrightarrow\C_p$. For each prime of $\mathfrak{p} \mid p$ let $F_{\mathfrak{p}}$ be the completion of $F$ at $\mathfrak{p}$, $\Sigma_{\mathfrak{p}}$ the set of its embeddings  in $\C_p$, $\mathcal{O}_{\mathfrak{p}}$ its ring of integers, $\kappa_{\mathfrak{p}}$ its residue field, $q_{\mathfrak{p}}= \sharp\kappa_{\mathfrak{p}}$ and $e_{\mathfrak{p}}$ the ramification index. We also fix uniformizers $\varpi_{\mathfrak{p}} \in \mathcal{O}_{\mathfrak{p}}$. Moreover we use the notation $\cO:= \cO_{F}\otimes \Z_p$ which naturally decompose as $\cO= \prod_{\dP}\cO_\dP$. 
We will denote by $\mathfrak{p}_0$ the prime corresponding to $\iota_p$ and $\tau_0$. 
We suppose the following hypothesis:
\begin{hyp} \label{hypothesis 1} $[F_{\dP_0}: \Q_p]= 1$ and $F$ unramified at $p$. 
\end{hyp}
We denote $\cO^{\tau_0}:=\prod_{\mathfrak{p}\neq \mathfrak{p}_0}\cO_{\mathfrak{p}}$. Thus we have the following decomposition $\cO= \cO_{\dP_0}\times \cO^{\tau_0}= \Z_p\times \cO^{\tau_0}$.

 We also fix a quaternion algebra over $F$ denoted by $B$ such that:
\begin{itemize}
	\item[(i)] split at $\tau_0$ and at each $\mathfrak{p}\mid p$,
	\item[(ii)] is ramified at each $\tau \in \Sigma_F\setminus \{\tau_0\}$.
\end{itemize}

We choose from now on $\lambda \in \Q$ such that $\lambda < 0$ and $p$ split in $\Q(\sqrt{\lambda})$. Let $E:= F(\sqrt{\lambda})$ and denote $z \mapsto \overline{z}$ the not-trivial automorphism of $E/F$.

We denote $D:= B\otimes_{F}E$ which is a quaternion algebra over $E$ and we consider the involution on $D$ defined by $l= b\otimes z \mapsto \overline{l}:= \bar b\otimes \overline{z}$ where $\bar b$ is the canonical involution of $B$. We fix $\delta \in D^\times$ such that $\overline{\delta}= -\delta$ and define a new involution on $D$ by $l\mapsto l^{\ast}:= \delta^{-1}\overline{l}\delta$.  We denote by $V$ to the underlying $\Q$-vector space of $D$ endowed with the natural left action of $D$.  
We have a symplectic bilinear form on $V$: 
$$\Theta: V \times V \rightarrow \Q, \ \ \  (v,w) \mapsto \mathrm{Tr}_{E/\Q}(\mathrm{Tr}_{D/E}(v\delta w^{\ast})).$$

Let $G'$  be the reductive group over $\Q$ such that for each $\Q$-algebra $R$ we have:
$$G'(R)= \left\lbrace  \mathrm{D-linear \ symplectic \ similitudes \ of } \ (V\otimes_{\Q}R, \Theta\otimes_{\Q}R) \right\rbrace. $$

Let $A\subset B$ be a $\cO_F$-order, and let $A_D:=A\otimes_{\cO_F}\cO_E$.
We introduce a way to cut certain modules endowed with an action of $A_D$. We denote by $\psi:\Q(\sqrt{\lambda})\longrightarrow D$ the natural embedding given by $z\longmapsto1\otimes z$ and fix an extension $R/\cO_E$  such that $A\otimes_{\cO_F}R=\M_2(R)$. For any $R$-module $M$ endowed with a linear action of $A_D$, we define 
	$$M^{+}:=\left\{v\in M:\;\psi(e)\ast v=ev,\mbox{ for all }e\in \Z(\sqrt{\lambda})\right\},$$
	$$M^{-}:=\left\{v\in M:\;\psi(e)\ast v=\bar ev,\mbox{ for all } e\in \Z(\sqrt{\lambda})\right\}.$$
	 Each $M^{\pm}$ is equipped with an action of $A\otimes_{\Z}R=\M_2(R)\otimes_{\Z}\cO_F$ and we put $M^{\pm,1}:=(\begin{smallmatrix}1&0\\0&0\end{smallmatrix})M^{\pm}$ and $M^{\pm,2}:=(\begin{smallmatrix}0&0\\0&1\end{smallmatrix})M^{\pm}.$  
	Note that both are isomorphic $R\otimes_{\Z}\cO_F$-modules through the matrix $(\begin{smallmatrix}0&1\\1&0\end{smallmatrix})$. Moreover, by construction we have:
	 $$M\supseteq M^+\oplus M^-= M^{+,1}\oplus M^{-,1}\oplus M^{+,2}\oplus M^{-,2},$$
	 and the inclusion is an equality if ${\rm disc}(\Q(\sqrt{\lambda}))\in R^\times$.

\section{Unitary modular forms} We introduce unitary Shimura curves associated with $G'$ and their moduli interpretation. We define the sheaves which give rise to modular forms. See \cite[\S 4]{BM} for further details.

\subsection{Unitary Shimura curves}\label{ss:unitary Shimura curves} 
Let $\cO_B\subset B$ be a maximal order and let $\cO_D:=\cO_B\otimes_{\cO_F}\cO_E$. The order $\cO_D$ is maximal (locally) at every prime not dividing ${\rm disc}(B){\rm disc}(D)^{-1}$. We denote by $G_D$ the algebraic group attached to $\cO_D$, namely, $G_D(R):=(\cO_D\otimes_{\Z}R)^\times$ for any $\Z$-algebra $R$.

Let $\cN$ be an integral ideal of $F$ prime to $\mathrm{disc}(B)$ and consider the open compact subgroups of $G_D(\hat\Z)$ and $G'(\A_f)$: 
$$K_1^D(\cN ):=\left\{\left(\begin{smallmatrix}a&b\\c&d\end{smallmatrix}\right)\in G_D(\hat\Z):\; c\equiv d-1\equiv 0\;{\rm mod}\;\cN\cO_E\right\},\qquad K_{1}^{'}(\cN ):=K_1^D(\cN )\cap G'(\A_f).$$

We define the \emph{unitary Shimura curve} over $\C$ of level $K_1'(\cN )$ as:
\begin{equation}\label{ShimCurv}
X(\C):=G'(\Q)_+\backslash (\dH\times G'(\A_f)/K_{1}^{'}(\cN )).
\end{equation}

Let $\cO_{B,\cM}\subseteq \cO_B$ be an Eichler order with a well chosen level $\cM\mid\cN$ so that the conditions of \cite[Lemma 4.2]{BM} are satisfied. 
Let $L/E$ be a finite extension such that $B\otimes_F L=\M_2(L)$. By \cite[\S 2.3]{carayol86} the Riemann surface $X(\C)$ has a model (also denoted $X$) defined over $L$. This curve solves the following moduli problem:  if $R$ is a $L$-algebra then $X(R)$ corresponds to the set of the isomorphism classes of tuples $(A,\iota, \theta, \alpha)$ where: 
	\begin{itemize}
		\item[$(i)$] $A$ is an abelian scheme over $R$  of relative dimension $4d$. 
		\item[$(ii)$] $\iota: \cO_\cM \rightarrow \mathrm{End}_R(A)$ gives an action of the ring $\cO_\cM$ on $A$  such that ${\rm Lie}(A)^{-,1}$ is of rank 1 and the action of $\cO_F$ factors through $\cO_F\subset E\subseteq L$.
		\item[$(iii)$] $\theta$ is a $\cO_\cM$-invariant homogeneous polarization of $A$ such that the Rosati involution sends $\iota(d)$ to $\iota(d^\ast)$.
		\item[$(iv)$]  $\alpha$ is a class modulo $K_{1}^{'}(\cN)$ of $\cO_\cM$-linear symplectic similitudes $\alpha:\hat T(A)\stackrel{\simeq}{\rightarrow}\hat\cO_\cM$.
	\end{itemize}	


\begin{rmk}\label{rmkonpts} Since $p \nmid{\rm disc}(B)$, a class $\alpha$ of $\cO_\cM$-linear symplectic similitudes modulo $K_{1}^{'}(\cN)$ is decomposed as $\alpha=\alpha_p\times\alpha^p$. By \cite[Remark 4.5]{BM} to provide a $\alpha_p$ modulo $K_{1}^{'}(\cN)_p$ amounts to giving a point  $P\in A[\cN\cO_{F}\otimes\Z_p]^{-,1}$ that generates a subgroup isomorphic to $(\cO_{F}\otimes\Z_p)/(\cN\cO_{F}\otimes\Z_p)$. We have an analogous description in case of $\Gamma_0(\cN)$-structures.
	
\end{rmk}

\subsection{Modular sheaves}\label{ss:modular sheaves} We introduce the sheaves which give rise to the modular forms for $G'$. Let $L_0/F$ be an extension such that $B\otimes_{F}L_0= \mathrm{M}_2(L_0)$, write $L=L_0(\sqrt{\lambda})\supset E$, and denote by  $X_L$ the  base change to $L$ of the unitary Shimura curve $X$. Using the universal abelian variety $\pi: A\rightarrow  X_L$ we define the following coherent sheaves on $X_L$:
\[
\omega:=\left(\pi_\ast\Omega^1_{A/X_L}\right)^{+,2} \qquad \omega_-:=\left(\left(R^1\pi_\ast\cO_A\right)^{+,2}\right)^\vee \qquad \cH:=\left(\cR^1\pi_\ast\Omega^\bullet_{A/X_L}\right)^{+,2}.
\]
The sheaf  $\cH$ is endowed with a Gauss-Manin connection $\bigtriangledown:\cH\rightarrow\cH\otimes\Omega^1_{X_L}$. The natural $\cO_D$-equivariant exact sequence:
	\begin{equation}\label{exseqdR}
	0\longrightarrow \pi_\ast\Omega^1_{A/X_L} \longrightarrow \cR^1\pi_\ast\Omega^\bullet_{A/X_L} \longrightarrow R^1\pi_\ast\cO_A \longrightarrow 0,
	\end{equation}
induces the Hodge exact sequence (see \cite[\S 2.3.1]{ding17})
$0\rightarrow \omega \rightarrow\cH \rightarrow\omega_-^\vee \rightarrow 0$.

If $L$ contains the Galois closure of $F$ then the natural decomposition $F\otimes_{\Q}L\cong L^{\Sigma_F}$ induces:
$\omega= \bigoplus_{\tau \in \Sigma_F}\omega_{\tau}$  and $\cH= \bigoplus_{\tau \in \Sigma_F}\cH_{\tau}$.
As the sheaves $(\Omega^1_{A/X_L})^{+,1}$ and $(\Omega^1_{A/X_L})^{+,2}$ are isomorphic then condition $(ii)(2)$ of the moduli problem of $X$ imply that $\omega_{\tau_0}$ is locally free of rank 1, while $\omega_\tau$ is of rank $2$ for $\tau \neq \tau_0$.  Moreover, $\omega_-$ is locally free of rank 1. Thus, $\omega_\tau=\cH_\tau$ for each $\tau\neq \tau_0$, and we have the exact sequence
\begin{equation}\label{Hodge1}
0\longrightarrow \omega_{\tau_0} \longrightarrow\cH_{\tau_0} \stackrel{\epsilon}{\longrightarrow}\omega_-^{\vee} \longrightarrow 0.
\end{equation}

Let $\underline{k}=(k_\tau)\in \N[\Sigma_F]$, we introduce the \emph{modular sheaves} over $X_L$ considered in this paper:
$$\omega^{\underline{k}}:= \omega_{\tau_0}^{\otimes k_{\tau_0}}\otimes \bigotimes_{\tau \neq \tau_0}\Sym^{k_\tau}\omega_\tau.$$

\begin{defi} A \emph{modular form} of weight $\underline{k}$ and coefficients in $L$ for $G'$ is a global section of $\omega^{\underline{k}}$, i.e. an element of $\mathrm{H}^{0}(X_L, \omega^{\underline{k}})$. 
\end{defi}

In \cite[\S 4.3]{BM} an isomorphism
$\varphi_{\tau_0}:\omega_{\tau_0}\stackrel{\simeq}{\longrightarrow}\omega_-$
is provided by the polarization. Thus, the Kodaira-Spencer isomorphism (see \cite[Lemme 2.3.4]{ding17}) induces the isomorphism:
\[
KS:\Omega^1_{X_L}\stackrel{\simeq}{\longrightarrow} \omega_{\tau_0}\otimes\omega_-\stackrel{\varphi_{\tau_0}^{-1}}{\longrightarrow} \omega_{\tau_0}^{\otimes 2}.
\] 

If $\tau\neq\tau_0$, also the polarization provides an isomorphism:
\begin{equation}\label{isowedge}
\varphi_\tau:\bigwedge^2\left(\cR^1\pi_\ast\Omega^\bullet_{A/X_L}\right)_\tau^{+,2} =\bigwedge^2 \omega_\tau\stackrel{\simeq}{\longrightarrow}\cO_{X_L}
\end{equation}

\subsection{Katz Modular forms}\label{KatzModForm}
Let $R_0$ be a $L$-algebra and let $f\in H^0(X/R_0,\omega^{\underline{k}})$. If $R$ is a $R_0$-algebra, $(A,\iota,\theta,\alpha)$ is a tuple corresponding to a point of $X(\Spec(R))$ and $w=(f_{0},(f_\tau,e_\tau)_{\tau\neq \tau_0})$ is a $R$-basis of $\omega_A=\left(\Omega^1_{A/R}\right)^{+,2}$, then there exists $f(A,\iota,\theta,\alpha,w)\in\bigotimes_{\tau\neq \tau_0}\Sym^{k_\tau}\left(R^2\right)^\vee$ such that
\[
f(A,\iota,\theta,\alpha)=f(A,\iota,\theta,\alpha,w)(P)f_{0}^{\otimes k_{\tau_0}}\mbox{ with } P((x_\tau, y_\tau)_{\tau\neq\tau_0})=\bigotimes_{\tau\neq {\tau_0}}|\begin{smallmatrix}f_\tau&e_\tau\\x_\tau&y_\tau\end{smallmatrix}|^{k_\tau}\varphi_{\tau}(f_\tau\wedge e_\tau)^{-k_\tau}
\]

Thus a section $f\in H^0(X/R_0,\omega^{\underline{k}})$ is characterized as a rule that assigns to any $R_0$-algebra $R$ and $(A,\iota,\theta,\alpha, w)$ over $R$ a linear form 
\[
f(A,\iota,\theta,\alpha,w)\in\bigotimes_{\tau\neq {\tau_0}}\Sym^{k_\tau}\left(R^2\right)^\vee
\]
satisfying:
\begin{itemize}
\item[(A1)] The element $f(A,\iota,\theta,\alpha,w)$ depends only on the $R$-isomorphism class of $(A,\iota,\theta,\alpha)$.

\item[(A2)] Formation of $f(A,\iota,\theta,\alpha,w)$ commutes with extensions $R\rightarrow R'$ of $R_0$-algebras.

\item[(A3)] If $(t, \underline{g})\in R^\times\times\GL_2(R)^{\Sigma_F\setminus\{\tau_0\}}$:
\[
f(A,\iota,\theta,\alpha,w(t,\underline{g}))=t^{-k_{\tau_0}}\cdot\left(\underline{g}^{-1} f(A,\iota,\theta,\alpha,w)\right).
\]
where $(t,\underline{g})$ acts on $w$ in a natural way (see \cite[\S 4.4]{BM} for further details).
\end{itemize}

\section{Overconvergent families of modular forms}\label{s:integral models and canonical groups} In this section we construct families of modular sheaves that $p$-adically interpolate the modular sheaves introduced in \ref{ss:modular sheaves}. Again, for further details see \cite[\S 5]{BM}.

\subsection{Integral models, Hasse invariants and canonical subgroups}\label{ss:integral models and divisible groups} Let $\cN$ be an integral ideal of $F$ prime to $p$ and $\mathrm{disc}(B)$. Write $K_1'\left(\mathfrak{n},\prod_{\dP\neq\dP_0}\dP\right):=K_1'\left(\mathfrak{n}\right)\cap K_0'\left(\prod_{\dP\neq\dP_0}\dP\right)$, and denote by $X$ the unitary Shimura curve of level $K_1'\left(\mathfrak{n},\prod_{\dP\neq\dP_0}\dP\right)$ similarly as in \S\ref{ss:unitary Shimura curves}.  Since $p$ is coprime to the discriminant of $B$, by \cite[\S 5.3]{carayol86} $X$ admits a canonical model $X_{\mathrm{int}}$ over $\cO_{\dP_0}$, representing the analogue moduli problem described in \S\ref{ss:unitary Shimura curves} but exchanging an $E$-algebra by an $\cO_{\dP_0}$-algebra $R$. Namely, it classifies quadruples $(A,\iota,\theta,\alpha^{\dP_0})$ over $R$, where $\alpha^{\dP_0}$ is a class of $\cO_D$-linear symplectic similitudes outside $\dP_0$. By \cite[\S 5.4]{carayol86}, $X_{\mathrm{int}}$ has good reduction. Let $\pi: {\bf A} \rightarrow X_{\mathrm{int}}$ be the universal abelian variety, whose existence is guaranteed by the moduli interpretation of $X_{\mathrm{int}}$. Notice that the universal abelian variety $A\rightarrow X_L$ introduced in \S \ref{ss:modular sheaves} is the generic fibre of ${\bf A} \rightarrow X_{\mathrm{int}}$. Since we have added $\Gamma_0(\dP)$-structure for all $\dP\neq\dP_0$, ${\bf A}$ is endowed with a subgroup $C_\dP\subset {\bf A}[\dP]^{-,1}$ isomorphic to $\cO_\dP/\dP$ by Remark \ref{rmkonpts}. 

  Let $\dX$ be denote the formal scheme over $\Spf(\cO_{\dP_0})$ obtained as the completion of $X_{\mathrm{int}}$ along its special fiber which is denoted by  $\bar{X}_{\mathrm{int}}$ .

The $p$-divisible group ${\bf A}[p^\infty]$ over $X_{\mathrm{int}}$ is decomposed as:
\[
{\bf A}[p^\infty]={\bf A}[\mathfrak{p}_0^\infty]^+\oplus \left[ \bigoplus_{\mathfrak{p} \neq \mathfrak{p}_0}{\bf A}[\mathfrak{p}^\infty]^+\right] \oplus {\bf A}[\mathfrak{p}_0^\infty]^-\oplus \left[\bigoplus_{\mathfrak{p} \neq \mathfrak{p}_0}{\bf A}[\mathfrak{p}^\infty]^-\right] ,
\] 
We are interested in the $p$-divisible groups $\mathcal{G}_0:= {\bf A}[\mathfrak{p}_0^\infty]^{-, 1}$ and $\mathcal{G}_{\mathfrak{p}}:= {\bf A}[\mathfrak{p}^\infty]^{-, 1}$ if $\mathfrak{p} \neq \mathfrak{p}_0$, which are defined over $X_{\mathrm{int}}$ and endowed with actions of $\cO_{\dP_0}$ and $\cO_{\mathfrak{p}}$ respectively. The sheaves of invariant differentials of the corresponding Cartier dual $p$-divisible groups are denoted by $\omega_{0}:= \omega_{\cG_{0}^D}$ and  $\omega_{\mathfrak{p}}:= \omega_{\cG_{\mathfrak{p}}^D} $ if $\mathfrak{p} \neq \mathfrak{p}_0$. This fits with the definition of $\omega$ given in \S \ref{ss:modular sheaves} since, as explained in \cite[\S 5.1]{BM}, $\left(\pi_\ast\Omega^1_{{\bf A}/X_{\rm int}}\right)^{+,2}\simeq\left(\pi_\ast\Omega^1_{{\bf A}^\vee/X_{\rm int}}\right)^{-,1}$ by means of the polarization.

As explained in \cite[\S 5.1]{BM}, the universal polarization provides an isomorphism of sheaves of invariant differentials $\omega_0\simeq\omega_{\cG_0}$ compatible with $\varphi_{\tau_0}$. Moreover, for any ring $\hat\cO$ containing all the $p$-adic embeddings of $\cO_F\hookrightarrow\cO_{\C_p}$, if we extend our base ring $\cO_{\dP_0}$ to $\hat\cO$ then we have a decomposition $\omega_{\mathfrak{p}}= \oplus_{\tau \in \Sigma_\mathfrak{p}}\omega_{\mathfrak{p}, \tau}$, where each $\omega_{\mathfrak{p}, \tau}$ has rank $2$ and we have an isomorphism $\varphi_\tau:\bigwedge^2\omega_{\dP,\tau}\stackrel{\simeq}{\rightarrow}\cO_{X_{\rm int}}$. The sheaves $\omega_{\dP,\tau}$ and $\omega_0$ are the integral models of $\omega_\tau$ and $\omega_{\tau_0}$, respectively.

There is a dichotomy in $X_{\mathrm{int}}$ which says that any point in the generic fiber $\bar{X}_{\mathrm{int}}$ is ordinary or  supersingular (with respect to $\mathcal{G}_0$), and there are finitely many supersingular points in $\bar{X}_{\mathrm{int}}$. From \cite[Proposition 6.1]{kassaei04} there exists $\mathrm{Ha}\in H^0\left(\bar{X}_{\mathrm{int}},\omega_{0}^{p-1}\right)$ that vanishes exactly at supersingular geometric points and these zeroes are simple. This is called the \emph{Hasse invariant}.

We denote by $\overline{\rm Hdg}$ the locally principal ideal of $\cO_{\bar{X}_{\mathrm{int}}}$ described as follows: for each $U=\Spec(R)\subset\bar{X}_{\mathrm{int}}$  if $\omega_{0}\mid_U=Rw$ and  $\mathrm{Ha}\mid_U=Hw^{\otimes(p-1)}$ then $\overline{\rm Hdg}\mid_U=HR\subseteq R$. Let $\rm Hdg$ the inverse image of $\overline{\rm Hdg}$ in $\cO_{\dX}$, which is also a locally principal ideal. Note that $\mathrm{Ha}^{p^n}$ extends canonically to a section of $H^0(\dX,\omega_0^{p^n(p-1)}\otimes\Z/p^{n+1}\Z)$, indeed, for any two extensions $\mathrm{Ha}_1$ and $\mathrm{Ha}_2$ of $\mathrm{Ha}$ we have $\mathrm{Ha}_1^{p^n}=\mathrm{Ha}_2^{p^n}$ modulo $p^{n+1}$ by the binomial formula. 

\begin{rmk} From \cite[Prop. 3.4]{brasca13} there exists a $(p-1)$-root of the principal ideal $\mathrm{Hdg}$. This ideal is denoted $\underline{\delta}$ or $\mathrm{Hdg}^{1/(p-1)}$.
\end{rmk}

We introduce the corresponding neighborhoods of the ordinary locus. 
For each integer $r\geq 1$ we denote by $\dX_{r}$ the formal scheme over $\dX$ which represents the functor that classifies for each $p$-adically complete  $\hat\cO$-algebra $R$:
$$\dX_{r}(R)= \left\lbrace [(h, \eta)] \ \ | \ \  h\in\dX(R),\; \eta\in H^0(\Spf(R),h^\ast(\omega_{\cG}^{(1-p)p^{r+1}})),\; \eta\cdot{\rm Ha}^{p^{r+1}}=p\mod p^2 \right\rbrace ,$$
here the brackets means the equivalence class given by $(h,\eta)\equiv (h',\eta')$ if $h=h'$ and $\eta=\eta'(1+pu)$ for some $u\in R$ 
(see \cite[Definition 3.1]{AIP-halo}).

\begin{prop}\cite[Corollaire A.2]{AIP-halo}\label{p:canonical subgroups} There exists a canonical subgroup $C_n$ of $\cG_0[\mathfrak{p}_0^n]$ for $n\leq r$ over $\dX_{r}$. This is unique and satisfy the compatibility $C_n[\mathfrak{p}_0^{n-1}]=C_{n-1}$. Moreover, if we denote $D_n:= \cG_0[\mathfrak{p}_0^n]/C_n$ then $\omega_{D_n}\simeq\omega_{\cG_0[\dP^n]}/\mathrm{Hdg}^{\frac{p^n-1}{p-1}}$.
\end{prop}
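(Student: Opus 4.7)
The plan is to reduce the statement to the standard existence theory of canonical subgroups for Barsotti--Tate groups of dimension one and height two, as developed by Katz, Abbes--Mokrane, Fargues, and Andreatta--Iovita--Pilloni. By Hypothesis \ref{hypothesis 1} we have $F_{\dP_0}=\Q_p$, hence the moduli condition that $\mathrm{Lie}({\bf A})^{-,1}$ is of rank $1$ forces $\cG_0={\bf A}[\dP_0^\infty]^{-,1}$ to be a one-dimensional $p$-divisible group of height $2$, and $\omega_0$ is locally free of rank one. This places us squarely in the classical setting of the theory.

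First I would translate the defining condition of $\dX_r$ into a truncated bound on $\mathrm{Hdg}$. The relation $\eta\cdot\mathrm{Ha}^{p^{r+1}}\equiv p \pmod{p^2}$ asserts that $\mathrm{Hdg}^{p^{r+1}}$ generates the ideal $(p)$ modulo $(p^2)$, so the $p$-adic valuation of $\mathrm{Hdg}$ is at most $p^{-(r+1)}$ at every geometric point of $\dX_r$. For $n\leq r$ this bound lies well below the threshold required in the classical theory, and appeal to \cite[Corollaire A.2]{AIP-halo} produces $C_n\subset\cG_0[\dP_0^n]$. Uniqueness follows from the characterization of $C_n$ as the unique lift of the kernel of $n$-fold relative Frobenius on the ordinary locus, and the compatibility $C_n[\dP_0^{n-1}]=C_{n-1}$ is then a formal consequence of uniqueness applied to the restriction of $C_n$ to $\cG_0[\dP_0^{n-1}]$.

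Second, I would establish the formula for $\omega_{D_n}$ by induction on $n$. The base case $n=1$ is the standard statement that $C_1$ is the kernel of an approximate Frobenius, so the quotient map $\cG_0[\dP_0]\to D_1$ has cokernel of differentials generated by $\mathrm{Hdg}$, yielding $\omega_{D_1}\simeq\omega_{\cG_0[\dP_0]}/\mathrm{Hdg}$. For the inductive step, the filtration $0\subset C_1\subset\cdots\subset C_n$ modulo $C_1$ produces the short exact sequence
\[
0\longrightarrow D_{n-1}^{(\dP_0)}\longrightarrow D_n \longrightarrow D_1 \longrightarrow 0,
\]
where the Frobenius twist scales the inductive exponent $(p^{n-1}-1)/(p-1)$ by $p$ and the contribution from $D_1$ adds $1$, giving the geometric sum $1+p+\cdots+p^{n-1}=(p^n-1)/(p-1)$.

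The main obstacle, were one to reprove \cite[Cor. A.2]{AIP-halo} rather than invoke it, is the quantitative description of $\omega_{D_n}$: existence of $C_n$ as an abstract finite flat subgroup is classical, but tracking the Hodge ideal of the quotient through $n$ iterates of approximate Frobenius requires the full deformation-theoretic machinery of loc. cit. In our geometric setting the one-dimensional height-two case is treated \emph{verbatim} there, so the only verification needed is that our moduli-theoretic definition of $\dX_r$ matches theirs under the translation between $\eta$-trivializations and truncated valuations of $\mathrm{Hdg}$, which is immediate.
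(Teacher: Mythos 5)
The paper gives no proof of this proposition at all; it is merely cited, as the label indicates, from \cite[Corollaire A.2]{AIP-halo}, which is exactly what your proposal ultimately does. Your supplementary checks — translating the moduli condition $\eta\cdot\mathrm{Ha}^{p^{r+1}}\equiv p\bmod p^2$ into the Hodge-height bound $v(\mathrm{Hdg})\le p^{-(r+1)}$ (hence below the classical threshold for $n\le r$), and the induction on $n$ via $0\to D_{n-1}^{(\dP_0)}\to D_n\to D_1\to 0$ with the Frobenius twist multiplying the exponent by $p$ so that $1+p\cdot\frac{p^{n-1}-1}{p-1}=\frac{p^n-1}{p-1}$ — are correct and consistent with the reference, so your proposal matches the paper's (purely citational) treatment.
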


If we write $\Omega_0\subseteq\omega_0$ for the subsheaf generated by the lifts of the image of the Hodge-Tate map, by \cite[Proposition A.3]{AIP-halo} we have that $\Omega_0=\underline{\delta}\omega_0$. Thus, we obtain a morphism
\begin{equation}\label{dlog0}
{\rm dlog}_0:D_n(\dX_{r})\longrightarrow \Omega_0\otimes_{\cO_{\dX_{r}}}(\cO_{\dX_{r}}/\cI_n)\subset \omega_0/p^n\mathrm{Hdg}^{-\frac{p^n-1}{p-1}},
\end{equation}
where $\cI_n:=p^n\mathrm{Hdg}^{-\frac{p^n}{p-1}}$.

By the moduli interpretation, the $p$-divisible group $\prod_{\dP\neq\dP_0}\cG_\dP\rightarrow\dX_{r}$ is \'etale isomorphic to $\prod_{\dP\neq\dP_0}(F_\dP/\cO_\dP)^2$. Assume that $r\geq n$. We denote by $\dX_{r,n}\rightarrow\dX_{r}$ the formal scheme obtained by adding to the moduli interpretation a point of order $\mathfrak{p}^{n}$ for each $\mathfrak{p} \neq \mathfrak{p}_0$ whose multiples generate $\cG_\dP[\dP]/C_\dP$ (see Remark \ref{rmkonpts}). It is clear that the extension $\dX_{r,n}\rightarrow\dX_{r}$ is \'etale and its Galois group contains $\prod_{\dP\neq\dP_0}(\cO_\dP/p^{n}\cO_\dP)^\times$ as a subgroup. Moreover, $\dX_{r,n}$ has also good reduction (see \cite[\S 5.4]{carayol86}). Now we trivialize the subgroup $D_n$:
Let  $\cX_{r,n}$ be the adic generic fiber of $\dX_{r,n}$. By \cite[Corollaire A.2]{AIP-halo}, the group scheme $D_{n}\rightarrow\cX_{r,n}$ is also \'etale isomorphic to $p^{{-n}}\cO_{\dP_0}/\cO_{\dP_0}$. We denote by $\mathcal{IG}_{r,n}$ the adic space over $\cX_{r,n}$ that trivialize $D_{n}$. Then the map $\mathcal{IG}_{r,n} \rightarrow\cX_{r,n}$ is a finite \'etale with Galois group $(\cO_{\dP_0}/p^{n}\cO_{\dP_0})^\times$. We denote by $\mathfrak{IG}_{r,n}$ the normalization $\mathcal{IG}_{ r,n}$ in $\dX_{ r,n}$ which is finite over $ \dX_{ r,n}$ and it is also endowed with an action of $(\cO_{{\dP_0}}/p^{n}\cO_{{\dP_0}})^\times$. These constructions are captured by the following tower of formal schemes: 
$$\mathfrak{IG}_{r,n} \longrightarrow\dX_{r,n}\longrightarrow\dX_{r},$$
endowed with a natural action of $(\cO/p^{n}\cO)^\times\simeq\prod_\dP(\cO_\dP/p^{n}\cO_\dP)^\times$.

Let us consider the morphism
\[
\eta:\mathfrak{IG}_{r,n} \longrightarrow\dX_{r,n}\longrightarrow\dX_{r}\longrightarrow\dX.
\]
The following result is completely analogous to \cite[Lemma 3.3]{AI17}:
\begin{lemma}\label{lemOmegaOmega}
The induced map $\eta^*\left(\Omega_{\dX/\hat\cO}^1\right)\longrightarrow\Omega_{\mathfrak{IG}_{r,n}/\hat\cO}^1$ has kernel and cokernel annihilated by a power of $\underline{\delta}$ and, in particular, by a power of $p$, depending on $n$.
\end{lemma}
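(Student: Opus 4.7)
The plan is to factor the map $\eta$ into its three constituent stages
\[
\mathfrak{IG}_{r,n}\xrightarrow{\;\eta_3\;}\dX_{r,n}\xrightarrow{\;\eta_2\;}\dX_r\xrightarrow{\;\eta_1\;}\dX
\]
and control the kernel and cokernel on the differentials of each stage separately. The statement will then follow from the exact sequences of relative differentials
\[
\eta_i^\ast\Omega^1_{\text{target}/\hat\cO}\longrightarrow\Omega^1_{\text{source}/\hat\cO}\longrightarrow\Omega^1_{\text{source}/\text{target}}\longrightarrow 0,
\]
combined to give a single four-term complex whose only nontrivial cohomology terms are annihilated by bounded powers of $\underline{\delta}$.

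\textbf{Stage $\eta_2$.} The extension $\dX_{r,n}\to\dX_r$ is \'etale by construction (adding level-$\mathfrak{p}^n$ structure for $\mathfrak{p}\neq\mathfrak{p}_0$, which is \'etale since $\prod_{\dP\neq\dP_0}\cG_\dP$ is \'etale over $\dX_r$). Hence $\eta_2^\ast\Omega^1_{\dX_r/\hat\cO}\xrightarrow{\sim}\Omega^1_{\dX_{r,n}/\hat\cO}$ is an isomorphism and contributes nothing.

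\textbf{Stage $\eta_1$.} The formal scheme $\dX_r\to\dX$ is obtained by adjoining a section $\eta$ with the relation $\eta\cdot\mathrm{Ha}^{p^{r+1}}=p\pmod{p^2}$, i.e.\ it is (locally, up to the equivalence class) a dilation along the Hasse ideal $\overline{\rm Hdg}^{p^{r+1}}$. The sheaf of relative differentials $\Omega^1_{\dX_r/\dX}$ is therefore supported on the supersingular locus and killed by a power of $\mathrm{Hdg}$, say $\mathrm{Hdg}^{N_1(r)}$. Consequently the cokernel of $\eta_1^\ast\Omega^1_{\dX/\hat\cO}\to\Omega^1_{\dX_r/\hat\cO}$ is annihilated by $\mathrm{Hdg}^{N_1(r)}$, while the kernel is torsion supported in the same locus (hence also killed by a power of $\underline\delta$) since $\dX\to\Spf(\hat\cO)$ is smooth.

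\textbf{Stage $\eta_3$.} This is the most delicate step. The map $\mathfrak{IG}_{r,n}\to\dX_{r,n}$ is finite, and \'etale after passing to the adic generic fibre (since $D_n$ is \'etale there, cf.\ Proposition \ref{p:canonical subgroups} and the preceding discussion). Therefore $\Omega^1_{\mathfrak{IG}_{r,n}/\dX_{r,n}}$ is a torsion sheaf; the issue is to bound its exponent (and that of the kernel) by an explicit power of $\underline{\delta}$. Here one uses the Hodge–Tate/{\rm dlog} map \eqref{dlog0}: the ramification of the Igusa-type tower trivializing $D_n$ is governed by the cokernel of $\mathrm{dlog}_0$, which by \cite[Proposition A.3]{AIP-halo} is killed by $\mathrm{Hdg}^{(p^n-1)/(p-1)}$ (via $\Omega_0=\underline\delta\,\omega_0$ and $\cI_n=p^n\mathrm{Hdg}^{-p^n/(p-1)}$). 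Differentiating the Kummer-type equations defining $\mathfrak{IG}_{r,n}$ over $\dX_{r,n}$ then shows that both $\Omega^1_{\mathfrak{IG}_{r,n}/\dX_{r,n}}$ and the kernel of $\eta_3^\ast\Omega^1_{\dX_{r,n}/\hat\cO}\to\Omega^1_{\mathfrak{IG}_{r,n}/\hat\cO}$ are annihilated by a power $\underline{\delta}^{N_3(n)}$ depending only on $n$.

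\textbf{Conclusion.} Splicing the three stages via the standard snake-lemma argument on relative cotangent sequences, the kernel and cokernel of the composite are annihilated by $\underline\delta^{N_1(r)+N_3(n)}$, and since $\underline\delta^{p-1}=\mathrm{Hdg}$ divides $p$ on $\dX_r$ (indeed $p\in\mathrm{Hdg}^{p^{r+1}}+p^2\cO_{\dX_r}$ by the defining relation), this bound is also a power of $p$ depending on $n$ (and $r$). The main obstacle is the explicit bound in Stage $\eta_3$; this is exactly the content that makes the proof parallel \cite[Lemma 3.3]{AI17}, the only adjustment being the use of our $\mathcal{G}_0={\bf A}[\mathfrak{p}_0^\infty]^{-,1}$ in place of the elliptic $p$-divisible group, which is harmless since we are working in the $F_{\dP_0}=\Q_p$ setting of Hypothesis \ref{hypothesis 1}.
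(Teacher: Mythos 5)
Your proposal is correct and mirrors the approach the paper intends: the paper supplies no proof, simply declaring the result "completely analogous to [AI17, Lemma 3.3]," and your three-stage decomposition along $\mathfrak{IG}_{r,n}\to\dX_{r,n}\to\dX_r\to\dX$ together with the use of $\mathrm{dlog}_0$ to control the Igusa stage is exactly the intended argument. One caveat worth making explicit: your Stage $\eta_1$ bound $\mathrm{Hdg}^{N_1(r)}$, with $N_1(r)$ essentially $p^{r+1}$, depends on $r$, so the final $\underline\delta$-exponent $N_1(r)+N_3(n)$ is not purely $n$-dependent; the way to recover the "depending on $n$" in the statement is to observe that $\mathrm{Hdg}^{p^{r+1}}$ already divides $p$ on $\dX_r$ by the defining relation, so $\Omega^1_{\dX_r/\dX}$ is in fact killed by $p$ itself and only the Igusa stage $\eta_3$ carries the $n$-dependent contribution. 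Relatedly, in Stage $\eta_1$ the reason the kernel is $\mathrm{Hdg}$-torsion is that $\dX_r\to\dX$ becomes an isomorphism after inverting $\mathrm{Hdg}$ (the dilation is trivial over the ordinary locus), not merely that $\dX/\Spf(\hat\cO)$ is smooth — smoothness only gives local freeness of $\eta_1^\ast\Omega^1_{\dX/\hat\cO}$.
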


\subsection{Formal vector bundles}\label{ss:formal vector bundles}
	We briefly recall in this subsection constructions performed in \cite[\S 2]{AI17} and \cite[\S 6]{BM}. Let $S$ be a formal scheme, $\cI$ its (invertible) ideal of definition and $\mathcal{E}$ a locally free $\cO_S$-module of rank $n$. We write $\bar S$ the scheme with structural sheaf $\mathcal{O}_S/\cI$ and put $\overline{\mathcal{E}}$ the corresponding $\cO_{\overline{S}}$-module. We fix marked sections $s_1,\cdots, s_m$ of $\overline{\mathcal{E}}$, namely, the sections $s_1, \cdots , s_m$ define a direct sum decomposition $\overline{\mathcal{E}} = \cO_{\bar S}^m\oplus Q$, where $Q$ is a
locally free $\cO_{\bar S}$-module of rank $n-m$.

Let $S-\mathrm{Sch}$ be the category of the  formal $S$-schemes. There exists a formal scheme $\V(\mathcal{E})$ over $S$ called the \emph{formal vector bundle} attached to $\mathcal{E}$ which represents the functor, denoted by the same symbol, $S-\mathrm{Sch} \rightarrow \mathrm{Sets}$, given by $\V(\mathcal{E})(t:T\rightarrow S):=  H^0(T,t^\ast(\mathcal{E})^\vee)=  {\rm Hom}_{\cO_T}(t^\ast(\mathcal{E}),\cO_T)$.  Crucial in \cite{AI17} is the construction of the so called \emph{formal vector bundles with marked sections} which is the formal scheme $\V_0(\mathcal{E},s_1,\cdots,s_m)$ over $\V(\mathcal{E})$ that represents the sub-functor $S-\mathrm{Sch} \rightarrow \mathrm{Sets}$
   $$ \V_0(\mathcal{E}, s_1,\cdots,s_m)(t:T\rightarrow S)=\left\lbrace \rho\in H^0(T,t^\ast(\mathcal{E})^\vee) \ | \  \bar\rho(t^\ast(s_i))=1, \  i=1,\cdots,m \right\rbrace, $$
here $\bar\rho$ is the reduction of $\rho$ modulo $\cI$. 

Given the fixed decomposition $\overline{\mathcal{E}}=Q\oplus \langle s_i\rangle_i$,
let us consider now the sub-functor $\V_Q(\mathcal{E},s_1,\cdots,s_m)$ that associates to any formal $S$-scheme $t:T\rightarrow S$ the subset of sections $\rho\in \V_0(\mathcal{E},s_1,\cdots,s_m)(T)$ whose reduction $\bar\rho$ modulo $\cI$ also satisfies $\bar\rho(t^\ast(m))=0$ for every $m\in Q$.
\begin{lemma}\cite[Lemma 6.3]{BM} 
The morphism $\V_{Q}(\mathcal{E},s_1,\cdots,s_m)\rightarrow \V_0(\mathcal{E},s_1,\cdots,s_m)$ is represented by a formal subscheme.
\end{lemma}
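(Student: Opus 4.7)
The plan is to verify representability Zariski locally and then glue the pieces. To begin, I would reduce to $S=\Spf(A)$ affine with $\cI=(\pi)$ principal and $\pi$ a non-zero-divisor, and $\mathcal{E}$ free. After possibly shrinking $S$, the decomposition $\overline{\mathcal{E}}=\mathcal{O}_{\bar S}^m\oplus Q$ lifts to a splitting $\mathcal{E}=\tilde{\mathcal{E}}_0\oplus\tilde Q$ with compatible bases $\tilde s_1,\dots,\tilde s_m$ of $\tilde{\mathcal{E}}_0$ lifting $s_1,\dots,s_m$ and $\tilde q_{m+1},\dots,\tilde q_n$ of $\tilde Q$ lifting a chosen basis of $Q$. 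In the dual coordinates $X_1,\dots,X_n$ on $\V(\mathcal{E})$, an $R$-point of $\V(\mathcal{E})$ is just a tuple $(x_1,\dots,x_n)\in R^n$ for any admissible (i.e.\ $\pi$-adically complete and $\pi$-torsion free) $A$-algebra $R$.

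Next I would translate the two functors into these local coordinates. Directly from the definition, on admissible $R$,
\begin{eqnarray*}
\V_0(\mathcal{E},s_1,\dots,s_m)(R)=\{(x_i)\in R^n:\, x_i\in 1+\pi R\text{ for }1\leq i\leq m\},
\end{eqnarray*}
which is represented by $B_0:=A\langle Y_1,\dots,Y_m,X_{m+1},\dots,X_n\rangle$ via $X_i=1+\pi Y_i$. The additional condition defining $\V_Q$ asks that $\bar\rho(t^{\ast}m)=0$ for every $m\in Q$, i.e.\ $x_j\in\pi R$ for $j>m$; since $\pi$ is a non-zero-divisor on $R$, this is equivalent to the datum of a unique $z_j\in R$ with $x_j=\pi z_j$. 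Hence $\V_Q$ is locally represented by $B_Q:=A\langle Y_1,\dots,Y_m,Z_{m+1},\dots,Z_n\rangle$, with structural morphism to $\V_0$ corresponding to the ring map $B_0\to B_Q$, $X_j\mapsto \pi Z_j$ (and $Y_i\mapsto Y_i$). This is a monomorphism of functors by construction, so exhibits $\V_Q$ as a formal subscheme of $\V_0$ over the chosen affine.

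For the gluing I would observe that the subfunctor $\V_Q\subseteq\V_0$ is defined purely in terms of the intrinsic data $(\mathcal{E},s_i,Q)$, without reference to any lifts. Concretely, if $\tilde q_j'=\tilde q_j+\pi\epsilon_j$ is a second lift of $q_j$, then $\rho(\tilde q_j')-\rho(\tilde q_j)=\pi\rho(\epsilon_j)\in \pi R$, so the condition ``$\rho(\tilde q_j)\in\pi R$'' is independent of the lift, and the analogous remark holds for the $\tilde s_i$. By Yoneda the local representing objects patch canonically across overlaps, producing a global formal scheme $\V_Q(\mathcal{E},s_1,\dots,s_m)$ equipped with a monomorphism to $\V_0(\mathcal{E},s_1,\dots,s_m)$.

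The single essential technical point, and really the only place where hypotheses matter, is working over admissible (hence $\pi$-torsion free) base algebras, which converts the pointwise condition ``$x_j\in\pi R$'' into the datum of a bona fide new coordinate $z_j=x_j/\pi$; without this, the subfunctor would not be representable in the naive way. Beyond that, the construction is the obvious $Q$-decorated analogue of the formal vector bundles with marked sections of \cite{AI17}, and the argument is essentially bookkeeping once a local frame adapted to the splitting has been chosen.
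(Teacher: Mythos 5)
Your proposal is correct and its local model is exactly the one the paper itself uses in \S\ref{filFVB}, where $\V_Q$ is described as $\Spf\bigl(R\langle Z_1,\dots,Z_d,T_1,\dots,T_{h-d}\rangle\bigr)$ with $g_Q^\ast(Y_i)=\alpha T_i$; this matches your $B_Q=A\langle Y_1,\dots,Y_m,Z_{m+1},\dots,Z_n\rangle$ with $X_j\mapsto\pi Z_j$. The paper does not reproduce a proof (it cites \cite[Lemma~6.3]{BM}), so the comparison is with the local description implicit in the text, and you have reconstructed it faithfully: pick a local frame adapted to a lift of the splitting $\overline{\mathcal{E}}=\cO_{\bar S}^m\oplus Q$, describe both functors in coordinates, and observe that the divisibility condition $x_j\in\pi R$ produces a new coordinate $z_j=x_j/\pi$ precisely because $\pi$ acts injectively on the test objects.

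Two small remarks. First, you correctly isolate the torsion-free hypothesis on test algebras as the load-bearing point; this is indeed required, since the map $B_0\to B_Q$, $X_j\mapsto\pi Z_j$, is not a monomorphism of formal schemes on the full category of formal $S$-schemes (the self fibre product introduces $\pi$-torsion killing $Z_j-Z_j'$). The implicit convention in \cite{BM}, as in \cite{AI17} and \cite{AIP-halo}, is to work with admissible (in particular $\pi$-torsion-free) formal schemes, so your restriction is the intended one rather than an extra assumption. Second, note that $B_0\to B_Q$ is neither surjective nor a localization, so ``formal subscheme'' here is used in the loose sense of a monomorphism in the admissible category rather than a closed or open immersion; your phrasing is consistent with that usage, but it is worth being aware that $g_Q$ only becomes an isomorphism after inverting $\alpha$, as the paper's Equation \eqref{isoFils} records.
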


\begin{rmk}
Notice that this construction is also functorial with respect to $(\mathcal{E},Q,s_1,\cdots,s_m)$. Indeed, given a morphism $\varphi:\mathcal{E}'\rightarrow\mathcal{E}$ of locally free $\cO_S$-modules of finite rank and marked sections $s_1,\cdots,s_m\in \overline{\mathcal{E}}$, $s_1',\cdots,s_m'\in \overline{\mathcal{E}'}$ such that $\bar\varphi(s_i')=s_i$ and $\bar\varphi(Q')\subseteq Q$, we have the morphisms making the following diagram commutative
\[
\xymatrix{
\V_{Q}(\mathcal{E},s_1,\cdots,s_m)\ar[r]\ar[d]&\V_{0}(\mathcal{E},s_1,\cdots,s_m)\ar[r]\ar[d]&\V(\mathcal{E})\ar[d]\\
\V_{Q}(\mathcal{E}',s_1',\cdots,s_m')\ar[r]&\V_{0}(\mathcal{E}',s_1',\cdots,s_m')\ar[r]&\V(\mathcal{E}')
}
\]
\end{rmk}
\begin{rmk}\label{DepOnQuot}
In fact, $\V_{Q}(\mathcal{E},s_1,\cdots,s_m)$ depends on $Q\subset \overline{\mathcal{E}}$ and the image of $s_i$ in $\overline{\mathcal{E}}/Q$. Indeed, given $m_i\in Q$ and $t:T\rightarrow S$, since $\bar\rho(t^\ast(m_i))=0$,
\[
\V_{Q}(\mathcal{E},(s_i+m_i)_i)(T)=\left\{\rho\in H^0(T,t^\ast(\mathcal{E})^\vee);\quad\bar\rho(t^\ast(s_i+m_i))=1,\; \bar\rho(t^\ast(Q))=0\right\}=\V_{Q}(\mathcal{E},(s_i)_i)(T).
\]
\end{rmk}

\subsubsection{Filtrations}\label{filFVB} 
Let $\mathcal{E}$ be a locally free $\cO_S$-module of rank $h$ and assume that there exists an $\cO_S$-submodule $\cF\subset\mathcal{E}$, locally free as $\cO_S$-module of rank $d$, which is a locally direct summand in $\mathcal{E}$. Assume also that we have marked sections $s_1,\cdots,s_d$ of $\overline{\mathcal{E}}$ that define a $\cO_{\bar S}$-basis of $\overline{\cF}$. Assume as above that we fix a direct summand $\overline{\mathcal{E}}=Q\oplus\langle s_i\rangle=Q\oplus\overline{\mathcal{F}}$. The following commutative diagram is obtained by functionality
\[
\xymatrix{
\V_Q(\mathcal{E},s_1,\cdots,s_d)\ar[r]^{g_Q}\ar[rd]&\V_0(\mathcal{E},s_1,\cdots,s_d)\ar[d]\ar[r]&\V(\mathcal{E})\ar[d]\\
&\V_0(\cF,s_1,\cdots,s_d)\ar[r]&\V(\cF)
}
\]
By \cite[Lemma 2.5]{AI17}, the diagram on the right hand side is cartesian and the vertical morphisms are principal homogeneous spaces under the group of affine transformations $\A_S^{h-d}$.
As a corollary, if $f:\V(\mathcal{E})\rightarrow S$ and $f_0:\V_0(\mathcal{E},s_1\cdots,s_d)\rightarrow S$ are the structural morphisms, $f_{0,*}\cO_{\V_0(\mathcal{E},s_1,\cdots,s_d)}$ is endowed with increasing filtrations ${\rm Fil}_\bullet f_{0,*}\cO_{\V_0(\mathcal{E},s_1,\cdots,s_d)}$ with graded pieces
\begin{equation}\label{gradpiec}
{\rm Gr}_nf_{0,*}\cO_{\V_0(\mathcal{E},s_1,\cdots,s_d)}\simeq f_{0,*}\cO_{\V_0(\mathcal{F},s_1,\cdots,s_d)}\otimes_{\cO_S}{\rm Sym}^n(\mathcal{E}/\mathcal{F}).
\end{equation}
If we consider the structural morphism $f_Q=f_0\circ g_Q:\V_Q(\mathcal{E},s_1\cdots,s_d)\rightarrow S$,
this defines a filtration ${\rm Fil}_\bullet f_{Q,*}\cO_{\V_Q(\mathcal{E},s_1,\cdots,s_d)}$ in $f_{Q,*}\cO_{\V_Q(\mathcal{E},s_1,\cdots,s_d)}$,
\[
{\rm Fil}_n f_{Q,*}\cO_{\V_Q(\mathcal{E},s_1,\cdots,s_d)}:=f_{Q,*}\cO_{\V_Q(\mathcal{E},s_1,\cdots,s_d)}\cap {\rm Fil}_n f_{0,*}\cO_{\V_0(\mathcal{E},s_1,\cdots,s_d)}.
\]
The filtration is characterized by its local description: Over $U={\rm Spf}(R)\subset S$, an open affine subscheme such that $\mathcal{F}$, $\mathcal{E}$ are free over $U$, we have that 
\[
\V_0(\mathcal{F},s_1,\cdots,s_d)\mid_U={\rm Spf}(R\langle Z_1,\cdots,Z_d\rangle),\quad \V_0(\mathcal{E},s_1,\cdots,s_d)\mid_U={\rm Spf}(R\langle Z_1,\cdots,Z_d,Y_1,\cdots,Y_{h-d}\rangle).
\]
Then ${\rm Fil}_n f_{0,*}\cO_{\V_0(\mathcal{E},s_1,\cdots,s_d)}(U)$ consists of the polynomials of degree $\leq n$ in the variables $Y_1,\cdots,Y_{h-d}$ with coefficients in $R\langle Z_1,\cdots,Z_d\rangle$. 
Similarly, if $\alpha\in\cI$ is a generator, we have that $\V_Q(\mathcal{E},s_1,\cdots,s_d)={\rm Spf}(R\langle Z_1,\cdots,Z_d,T_1,\cdots,T_{h-d}\rangle)$ and
\[
g_Q^\ast:R\langle Z_1,\cdots,Z_d,Y_1,\cdots,Y_{h-d}\rangle\rightarrow R\langle Z_1,\cdots,Z_d,T_1,\cdots,T_{h-d}\rangle, \qquad g_Q^\ast(Y_i)=\alpha T_i.
\] 
This implies that ${\rm Fil}_n f_{Q,*}\cO_{\V_Q(\mathcal{E},s_1,\cdots,s_d)}(U)$ consists of the polynomials of degree $\leq n$ in the variables $T_1,\cdots,T_{h-d}$ with coefficients in $R\langle Z_1,\cdots,Z_d\rangle$, and $g_Q^\ast$ provides an isomorphism 
\begin{equation}\label{isoFils}
{\rm Fil}_n f_{0,*}\cO_{\V_0(\mathcal{E},s_1,\cdots,s_d)}(U)[\alpha^{-1}]\simeq {\rm Fil}_n f_{Q,*}\cO_{\V_Q(\mathcal{E},s_1,\cdots,s_d)}(U)[\alpha^{-1}].
\end{equation}
Finally, both constructions of the filtrations are functorial in sense of \cite[Corollary 2.7]{AI17}.

\subsubsection{Connections}\label{connFVB} The following part is a brief summary of \cite[\S 2.4]{AI17}.
Suppose that we have a $\Z_p$-algebra $A_0$ and an element $\varpi\in A_0$ such that $A_0$ is $\varpi$-adically complete and separated. Let $S$ be a formal scheme locally of finite type over ${\rm Spf}(A_0)$ such that the topology of $S$ is the $\varpi$-adic topology. We let $\Omega_{S/A_0}^1$ be the $\cO_S$-module of continuous Kh\"aler differentials.

Consider a free $\cO_S$-module $\mathcal{E}$ endowed with an integrable connection $\bigtriangledown:\mathcal{E}\longrightarrow \mathcal{E}\otimes_{\cO_S}\Omega_{S/A_0}^1$. Assume that we have fixed marked sections $s_1,\cdots,s_d\in \overline{\mathcal{E}}$ which are horizontal for the reduction of $\bigtriangledown$ modulo $\cI=\varpi\cO_S$. Then by \cite[\S 2.4]{AI17} $\bigtriangledown$ defines an integrable connection
\[
\bigtriangledown_0:\V_0(\mathcal{E},s_1,\cdots,s_d)\longrightarrow \V_0(\mathcal{E},s_1,\cdots,s_d)\hat\otimes_{\cO_S}\Omega^1_{S/A_0}
\]
where $\hat\otimes$ denotes the completed tensor product.

Moreover, if we assume that we are in the situation of the previous section with a locally free $\cO_S$-module and a direct summand
$\mathcal{F}\subset\mathcal{E}$, we consider the filtrations ${\rm Fil}_\bullet f_{0,*}\cO_{\V_0(\mathcal{E},s_1,\cdots,s_d)}$. By \cite[Lemma 2.9]{AI17} the connection $\bigtriangledown_0$ satisfies Griffith's transversality with respect to the filtration ${\rm Fil}_\bullet f_{0,*}\cO_{\V_0(\mathcal{E},s_1,\cdots,s_d)}$, namely for every integer $n$ we have
\[
\bigtriangledown_0\left({\rm Fil}_n f_{0,*}\cO_{\V_0(\mathcal{E},s_1,\cdots,s_d)}\right)\subset {\rm Fil}_{n+1} f_{0,*}\cO_{\V_0(\mathcal{E},s_1,\cdots,s_d)}\hat\otimes_{\cO_S}\Omega_{S/A_0}^1.
\]
Furthermore the induced map
\begin{equation}\label{gradpiecD}
{\rm gr}_n(\bigtriangledown_0):{\rm Gr}_nf_{0,*}\cO_{\V_0(\mathcal{E},s_1,\cdots,s_d)}\longrightarrow {\rm Gr}_{n+1}f_{0,*}\cO_{\V_0(\mathcal{E},s_1,\cdots,s_d)}\hat\otimes_{\cO_S}\Omega_{S/A_0}^1
\end{equation}
is $\cO_S$-linear and, via the identification \eqref{gradpiec}, 
the morphism ${\rm gr}_\bullet(\bigtriangledown_0)$ is ${\rm Sym}^\bullet(\mathcal{E}/\mathcal{F})$-linear.

\subsection{Weight space}\label{ss:weight space}   We fix a decomposition: 
$$\cO^\times\cong \cO^0\times H,$$
where $H$ is the torsion subgroup of $\cO^\times$ and $\cO^0\simeq 1+p\cO$ is a free $\Z_p$-module of rank $d$.
We put $\Lambda_F:= \Z_p[[\cO^\times]]$ and $\Lambda_F^0:= \Z_p[[\cO^0]]$. The choice of a basis $\{e_1, ..., e_d\}$ of  $\cO^0$ furnishes an isomorphism $\Lambda_F^0\cong  \Z_p[[T_1, ....,T_d]]$ given by  $1+ T_i= e_i$ for $i= 1,..., d$. Moreover, for each $n\in \N$ we consider the algebras:
\[
\Lambda_{n}:= \Lambda_F\left\langle\frac{T_1^{p^{n-1}}}{p},\cdots,\frac{T_d^{p^{n-1}}}{p}\right\rangle\qquad\qquad \Lambda_{n}^0:=\Lambda_F^0\left\langle\frac{T_1^{p^{n-1}}}{p},\cdots,\frac{T_d^{p^{n-1}}}{p}\right\rangle
\]

The formal scheme $\dW=\Spf(\Lambda_F)$ is our formal \emph{weight space} for $G'$.  Thus for each complete $\Z_p$-algebra $R$ we have: 
$$\dW(R)={\rm Hom}_{\mathrm{cont}}(\cO^\times,R^\times).$$
We also consider the following formal schemes $\dW^0=\Spf(\Lambda_F^0)$, $\dW_{n}:=\Spf(\Lambda_{n})$ and $\dW^0_{n}:=\Spf(\Lambda_{n}^0)$. By construction we have $\dW=\bigcup_n\dW_{n}$ and $\dW^0=\bigcup_n\dW^0_{n}$. Moreover, we have the following explicit description:
\[
\dW_n^0(\C_p)=\{k\in{\rm Hom}_{\mathrm{cont}}(\cO^0,\C_p^\times): \;|k(e_i)-1|\leq p^{-p^{-n+1}},\;i=1,\cdots,d\}
\] 
We denote by ${\bf k}:\cO^\times\rightarrow\Lambda_F^\times$ the universal character of $\dW$, which decomposes as ${\bf k}= {\bf k}^0\otimes{\bf k}_f$ where: 
 \[
 {\bf k}_f:H\longrightarrow \Z_p[H]^\times\qquad\qquad {\bf k}^0:\cO^0\longrightarrow (\Lambda_F^0)^\times.
 \]
Let ${\bf k}_{n}^0: \cO^0 \rightarrow (\Lambda^0_n)^\times $ be  given by the composition of ${\bf k}^0$ with the inclusion $(\Lambda_F^0)^\times\subseteq (\Lambda^0_n)^\times$ and we put ${\bf k}_n:={\bf k}_n^0\otimes{\bf k}_f:\cO^\times\rightarrow\Lambda_n^\times$. The following Lemma can be found in \cite[Lemma 6.4]{BM}, but it also can be deduced from the computations below.

 \begin{lemma}\label{weightextends} Let $R$ be a $p$-adically complete $\Lambda^0_{n}$-algebra. Then ${\bf k}_{n}^0$ extends locally analytically to a character $\cO^0(1+p^{n}\lambda^{-1}\cO_{F}\otimes R)\rightarrow R^{\times}$, for any $\lambda\in R$ such that $\lambda^{p-1}\in p^{p-2}m_R$, where $m_R$ is the maximal of $R$. In particular, ${\bf k}_{n}^0$ is analytic on $1+p^{n}\cO$.
\end{lemma}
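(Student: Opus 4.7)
The plan is to use the logarithm-exponential correspondence and the $\Z_p$-module structure of $\cO^0$ to reduce the extension of ${\bf k}_n^0$ to a single convergence estimate on $\log(1+T_i)$ together with an estimate on the ``new'' exponents. Since $\{e_1,\ldots,e_d\}$ is a $\Z_p$-basis of $\cO^0$ and the logarithm gives a topological isomorphism $\cO^0\simeq p\cO$ (using $p>2$), the universal character is equivalently determined by the identities $\log{\bf k}_n^0(e_i)=\log(1+T_i)$, and the extension amounts to defining $u\mapsto\exp\bigl(\sum_i\alpha_i\log(1+T_i)\bigr)$ where $\log u=\sum_i\alpha_i\log e_i$.

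First I would check that $\log(1+T_i)\in\Lambda_n^0$ converges and has $p$-adic valuation at least $p^{1-n}$. This is immediate from the presentation of $\Lambda_n^0$: since $T_i^{p^{n-1}}/p\in\Lambda_n^0$ we have $v_p(T_i)\geq p^{1-n}$, so the term $T_i^k/k$ of the Mercator series has valuation $\geq kp^{1-n}-v_p(k)\to+\infty$.

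Now, for $R$ a $p$-adically complete $\Lambda_n^0$-algebra and $u\in\cO^0\cdot(1+p^n\lambda^{-1}\cO_F\otimes R)$, I identify $\cO_F\otimes R\simeq\cO\otimes_{\Z_p}R$ so that $\{\log e_i\}$ is an $R$-basis of $p\cO\otimes_{\Z_p}R$. Expanding $\log u=\sum_i\alpha_i\log e_i$ with $\alpha_i\in R$, and writing $u=u_0(1+p^n\lambda^{-1}w)$ with $u_0\in\cO^0$ and $w\in\cO_F\otimes R$, the ``new'' contribution to $\alpha_i$ lies in $p^n\lambda^{-1}R$, hence satisfies $v_p(\alpha_i)\geq n-v_p(\lambda)$ on that part (and $\alpha_i\in\Z_p$ for the $\cO^0$-factor). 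I then set
\[
\tilde{\bf k}_n^0(u):=\exp\Bigl(\sum_i\alpha_i\,\log(1+T_i)\Bigr)\in R^\times.
\]
Multiplicativity is automatic from the additivity of $\log$ and the functional equation of $\exp$ on commuting elements in its convergence disk; when $u\in\cO^0$ this specializes to $\prod_i(1+T_i)^{a_i}={\bf k}_n^0(u)$, recovering the original character.

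The main obstacle is ensuring convergence of the outer exponential, i.e.\ that $v_p(\alpha_i\log(1+T_i))>1/(p-1)$ for every $i$. The previous bounds yield $v_p(\alpha_i\log(1+T_i))\geq n-v_p(\lambda)+p^{1-n}$, and the hypothesis $\lambda^{p-1}\in p^{p-2}m_R$ is calibrated precisely so that this lower bound strictly exceeds $1/(p-1)$, giving both convergence and local analytic dependence of $\tilde{\bf k}_n^0$ on $u$. The ``in particular'' assertion is the case $\lambda=1$: then $1+p^n\lambda^{-1}\cO_F\otimes R=1+p^n(\cO_F\otimes R)$, whose intersection with $\cO^0$ is $1+p^n\cO$, and on this subgroup the construction reduces to $\exp(p^n b_i\log(1+T_i))$ in the exponents $b_i\in\Z_p$, which is manifestly analytic since $v_p(p^n\log(1+T_i))\geq n+p^{1-n}>1/(p-1)$.
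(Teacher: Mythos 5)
Your proposal hinges on the claim that $\log(1+T_i)\in\Lambda_n^0$ has $p$-adic valuation at least $p^{1-n}$. This is false for $n\geq 2$. The correct minimum of $k\cdot p^{1-n}-v_p(k)$ over $k\geq 1$ is \emph{not} attained at $k=1$; it is attained at $k=p^{n-1}$, where it equals $1-(n-1)=2-n$, which is negative for $n\geq 3$ and zero for $n=2$. (This is exactly what Section \ref{expdescunivchar} records: $u_n=\log(1+T)\in p^{2-n}\Lambda_{\tau_0,n}^0$.) So the bound you use to close the convergence estimate is off by roughly $n-2$, which is fatal: with the corrected bound, $v_p\bigl(\alpha_i\log(1+T_i)\bigr)\geq n-v_p(\lambda)+(2-n)=2-v_p(\lambda)$, and your claimed calibration against $1/(p-1)$ collapses.

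There is also a more structural problem: the formula $\tilde{\mathbf k}_n^0(u)=\exp\bigl(\sum_i\alpha_i\log(1+T_i)\bigr)$ is not well-defined on all of $\cO^0$. For $u\in\cO^0$ with exponents $a_i\in\Z_p$ of valuation $0$, the argument $a_i\log(1+T_i)$ has valuation $2-n\leq 0$ for $n\geq 2$, which lies well below the $1/(p-1)$ threshold needed for $\exp$ to converge. The identity $\exp\bigl(\sum_i a_i\log(1+T_i)\bigr)=\prod_i(1+T_i)^{a_i}$ that you invoke to "recover the original character" is therefore not a computation in $\Lambda_n^0$; the left side simply does not converge. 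The universal character $\mathbf k_n^0$ is only \emph{locally} analytic, with radius of analyticity $p^n$, and any global $\log$-$\exp$ reduction of the sort you propose necessarily loses this. The paper handles this by the locally analytic formula \eqref{univK}: one decomposes $\cO^0$ into residue discs modulo $p^{n-1}$, evaluates $\mathbf k_n^0$ exactly on representatives via $(1+T)^j$, and interpolates the remaining factor by the binomial series $\sum_i\binom{p^{-1}u_n}{i}(\cdot)^i$, whose convergence is controlled by the estimate in Remark \ref{valbinom}. Any correct argument must be anchored to that local decomposition (or, equivalently, must factor $u=u_0(1+p^n\lambda^{-1}w)$ and apply $\exp$ only to the small factor, defining the value on $u_0$ by the binomial series, not by $\exp\circ\log$).
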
	

 Recall that hypothesis \ref{hypothesis 1} imply a decomposition of rings $\cO=\cO_{\dP_0} \times\prod_{\mathfrak{p}\neq \mathfrak{p}_0}\cO_{\mathfrak{p}}= \Z_p\times\cO^{\tau_0}$ and put $\cO^{\tau_0,0}:=1+p\cO^{\tau_0}$. Analogously as above we introduce:
 \begin{eqnarray*}
 \begin{gathered}
 \Lambda_{\tau_0}:=\Z_p[[\Z_p^\times]]\qquad\qquad \Lambda_{\tau_0}^0:=\Z_p[[1+p\Z_p]]\simeq\Z_p[[T]]\\
\Lambda^{\tau_0}:=\Z_p[[\cO^{\tau_0\times}]]\qquad\qquad  \Lambda^{\tau_0,0}:=\Z_p[[\cO^{\tau_0,0}]]\simeq\Z_p[[T_2,\cdots,T_d]]\\
\Lambda_{\tau_0,n}:= \Lambda_{\tau_0}\left\langle\frac{T^{p^{n-1}}}{p}\right\rangle\qquad\qquad \Lambda_{\tau_0,n}^0:= \Lambda_{\tau_0}^0\left\langle\frac{T^{p^{n-1}}}{p}\right\rangle\\
 \Lambda^{\tau_0}_n:=\Lambda^{\tau_0}\left\langle\frac{T_2^{p^{n-1}}}{p},\cdots,\frac{T_d^{p^{n-1}}}{p}\right\rangle\qquad\qquad \Lambda^{\tau_0,0}_n:=\Lambda^{\tau_0,0}\left\langle\frac{T_2^{p^{n-1}}}{p},\cdots,\frac{T_d^{p^{n-1}}}{p}\right\rangle.
 \end{gathered}
\end{eqnarray*}
Thus, we have decompositions $\Lambda_n^0=\Lambda_{\tau_0,n}^0\hat\otimes\Lambda_n^{\tau_0,0}$ and $\Lambda_n=\Lambda_{\tau_0, n}\hat\otimes\Lambda_n^{\tau_0}$. We denote by ${\bf k}_{\tau_0, n}^{0}:(1+p\Z_p)\longrightarrow\Lambda_{\tau_0,n}^0$, ${\bf k}_{n}^{\tau_0,0}:\cO^{\tau_0,0}\longrightarrow\Lambda_n^{\tau_0,0}$, ${\bf k}_{\tau_0, n}:\Z_p^\times\rightarrow \Lambda_{\tau_0,n}$ and ${\bf k}_n^{\tau_0}:\cO^{\tau_0\times}\rightarrow \Lambda^{\tau_0}_n$ the universal characters. Then we have $ {\bf k}_{n}^0= {\bf k}_{\tau_0, n}^0\otimes {\bf k}_{n}^{\tau_0,0}$ and ${\bf k}_n={\bf k}_{\tau_0, n}\otimes {\bf k}_n^{\tau_0}$. Moreover,
 $${\bf k}_{\tau_0, n}\otimes {\bf k}_n^{\tau_0}= {\bf k}_{n}= {\bf k}_{n}^0\otimes{\bf k}_{f}={\bf k}_{\tau_0, n}^0\otimes {\bf k}_{n}^{\tau_0,0}\otimes{\bf k}_{f}.$$

\subsubsection{The universal character ${\bf k}_{\tau_0,n}^0$}\label{expdescunivchar} Recall that the weight space $\dW_{\tau_0,n}^0={\rm Spf}(\Lambda_{\tau_0,n}^0)$ classifies characters such that
\begin{equation}\label{Wn}
\dW_{\tau_0,n}^0(\C_p)=\left\{k\in{\rm Hom}_{\mathrm{cont}}(1+p\Z_p,\C_p^\times): \;|k(\exp(p))-1|\leq p^{-p^{-n+1}}\right\}.
\end{equation}
In this part we will describe the universal character ${\bf k}_{\tau_0,n}^0$. By the above lemma ${\bf k}_{\tau_0,n}^0$ is analytic when restricted to $1+p^n\Z_p$. Moreover, it is given by 
\[
1+p\Z_p\stackrel{{\bf k}_{\tau_0}^0}{\longrightarrow}\Lambda_{\tau_0}^\times\hookrightarrow \Lambda_{\tau_0,n}^\times,\qquad  {\bf k}_{\tau_0}^0(\exp(\alpha p))=(1+T)^\alpha,\quad\alpha\in \Z_p.
\] 
Notice that $(1+T)^\alpha=\exp(\alpha u_n)$, where $u_n:=\log(1+T)\in p^{-n+2}\Lambda_{\tau_0,n}^0$ since 
\[
\log(1+T)=-\sum_{k\geq 1}\frac{(-T)^k}{k}=p^{-n+2}\sum_{k\in p^{n-1}\N}(-1)^{k+1}\frac{T^{k-(v(k)-n+2)p^{n-1}}}{kp^{-v(k)}}\left(\frac{T^{p^{n-1}}}{p}\right)^{v(k)-n+2}-\sum_{k\not\in p^{n-1}\N}\frac{(-T)^k}{k},
\]
where $v:\Z\rightarrow\N$ is the $p$-adic valuation. This implies that
\begin{eqnarray*}
(1+T)^\alpha&=&\sum_{j=0}^{p^{n-1}-1}(1+T)^j\cdot 1_{j+p^{n-1}\Z_p}(\alpha)\cdot\exp((\alpha-j)u_n)\\
&=&\sum_{j=0}^{p^{n-1}-1}(1+T)^j\cdot 1_{j+p^{n-1}\Z_p}(\alpha)\cdot\sum_{i\geq 0}\binom{p^{-1}u_n}{i}\left(\frac{\exp(\alpha p)-\exp(jp)}{\exp(jp)}\right)^i,
\end{eqnarray*}
where the last equality comes from the following equality in $\Q[[X,Y]]$
\[
\exp(X\log(1+Y))=\sum_{i\geq 0}\binom{X}{i}Y^i, \qquad \binom{X}{i}:=\frac{X(X-1)\cdots(X-i+1)}{i!}.
\]
\begin{rmk}\label{valbinom}
Notice that $v(h!)\leq\frac{h}{p-1}$ (see \cite[Lemma 4.12]{AI17}), hence we have 
\[
\binom{p^{-1}u_n}{i}\in p^{-i\left(n-\frac{p}{p-1}\right)}\Lambda_{\tau_0,n}^0.
\]
This implies that $\sum_{i\geq 0}\binom{p^{-1}u_n}{i}\left(\frac{\exp(\alpha p)-\exp(jp)}{\exp(jp)}\right)^i$ converges since $\left(\frac{\exp(\alpha p)-\exp(jp)}{\exp(jp)}\right)^i\in p^{in}\Z_p$.
\end{rmk}
Thus, we obtain that the locally analytic expression of ${\bf k}_{\tau_0,n}^0$ is given by
\begin{equation}\label{univK}
{\bf k}_{\tau_0,n}^0(\beta)=\sum_{j=0}^{p^{n-1}-1}(1+T)^j\cdot 1_{\exp(jp)+p^{n}\Z_p}(\beta)\cdot\sum_{i\geq 0}\binom{p^{-1}u_n}{i}\left(\frac{\beta-\exp(jp)}{\exp(jp)}\right)^i.
\end{equation}
The following examples help us to visualize that the above expression can be specialized at both analytic character and locally constant characters that factor through $(1+p\Z_p)/(1+p^{n+1}\Z_p)$.
\begin{ex}
Let $k\in \dW_{\tau_0,n}^0(\C_p)$ be the point corresponding to the character $\beta\mapsto \beta^k$, for some $k\in\Z$. It clearly satisfies the characterization of Equation \eqref{Wn}. It corresponds to the maximal ideal $(T-\exp(kp)+1)$. We check that 
\begin{eqnarray*}
k^\ast{\bf k}_{\tau_0,n}^0(\beta)&=&\sum_{j=0}^{p^{n-1}-1}\exp(pk)^j\cdot 1_{\exp(jp)+p^n\Z_p}(\beta)\cdot\sum_{i= 0}^k\binom{k}{i}\left(\frac{\beta-\exp(jp)}{\exp(jp)}\right)^i\\
&=&\sum_{j=0}^{p^{n-1}-1}\exp(jp)^k\cdot 1_{\exp(jp)+p^{n}\Z_p}(\beta)\left(\frac{\beta}{\exp(jp)}\right)^k=\beta^k.
\end{eqnarray*}
\end{ex}

\begin{ex}
Let $\chi\in \dW_{\tau_0,n}^0(\C_p)$ be the point corresponding to the character $\chi(\exp(\alpha p))=\xi_n^\alpha$, for a fix $(n-1)$-th root of unity $\xi_n$. It also satisfies the characterization of Equation \eqref{Wn} and corresponds to the maximal ideal $(T-\xi_n+1)$. We check that 
\begin{eqnarray*}
\chi^\ast{\bf k}_{\tau_0,n}^0(\exp(\alpha p))&=&\sum_{j=0}^{p^{n-1}-1}\xi_n^j\cdot 1_{\exp(jp)+p^{n}\Z_p}(\exp(\alpha p))\cdot\sum_{i\geq 0}\binom{0}{i}\left(\frac{\exp(\alpha p)-\exp(jp)}{\exp(jp)}\right)^i\\
&=&\sum_{j=0}^{p^{n-1}-1}\xi_n^j\cdot 1_{j+p^{n-1}\Z_p}(\alpha)=\xi_n^\alpha.
\end{eqnarray*}
\end{ex}

\subsection{Overconvergent modular sheaves}\label{ss:overconvergent modular sheaves} 

We fix $L$  a finite extension of $\Q_p$ containing all the $p$-adic embedding of $F$ and let us work over the ring of integers of $L$. Let $r\geq n$. As  in \S \ref{ss:integral models and divisible groups}, we consider the ideal of $\cO_{\mathfrak{IG}_{ r,n}}$ given by $\mathcal{I}_n:=p^{n} \mathrm{Hdg}^{-\frac{p^{n}}{p-1}}$, which is our ideal in order to perform the construction of \S\ref{ss:formal vector bundles}. Then using notations from \S\ref{ss:formal vector bundles} we put $\overline{\mathfrak{IG}_{r,n}}$ for the corresponding reduction modulo $\cI_{n}$. From Equation \eqref{dlog0}, we have an isomorphism 
\begin{equation}\label{omega canonical vs G}
\mathrm{dlog}_0:D_{n}(\mathfrak{IG}_{r,n})\otimes_{\Z_p}(\cO_{\mathfrak{IG}_{r,n}}/\cI_n)\stackrel{\simeq}{\longrightarrow}\Omega_0\otimes_{\cO_{\mathfrak{IG}_{ r,n}}}(\cO_{\mathfrak{IG}_{r,n}}/\cI_n),
\end{equation}
where $\Omega_0$ is the $\cO_{\mathfrak{IG}_{ r,n}}$-submodule of $\omega_0$ generated by all the lifts of $\mathrm{dlog}_0(D_{n})$. 
By construction, there exist on $\mathfrak{IG}_{r,n}$ a universal canonical generator $P_{0,n}$ of $D_{n}$, and universal points $P_{\dP,n}$ of order $p^{n}$ in $\cG_\dP[p^{n}]$. We put:
\begin{equation}\label{e:omega for the machinery}
\Omega:= \Omega_0\oplus \bigoplus_{\mathfrak{p} \neq \mathfrak{p}_0}\omega_{\mathfrak{p}}= \Omega_0\oplus\Omega^0
\end{equation}
 where $\Omega^0= \bigoplus_{\mathfrak{p} \neq \mathfrak{p}_0}\omega_{\mathfrak{p}}$, and we denote $\overline{\Omega}$ the associated $\cO_{\overline{\mathfrak{IG}_{r,n}}}$-module. Now we produce marked sections in $\overline{\Omega}$ as follows. Let $\mathfrak{p}\mid p$ and we consider two cases: 
\begin{itemize}
	\item if $\mathfrak{p}= \mathfrak{p}_0$ we denote by $s_0 \in \overline{\Omega}$ the image $\mathrm{dlog}_0(P_{0,n})$ using the isomorphism (\ref{omega canonical vs G}). 
	\item if $\mathfrak{p} \neq \mathfrak{p}_0$ we firstly consider the decomposition $\omega_{\mathfrak{p}}= \oplus_{\tau \in \Sigma_\mathfrak{p}}\omega_{\mathfrak{p}, \tau}$ over $\mathfrak{IG}_{r,n}$ and the dlog map: 
	\begin{equation}\label{dlogP}
	\mathrm{dlog}_{\mathfrak{p}}: \cG_{\mathfrak{p}}[p^{n}] \longrightarrow \omega_{\cG_{\mathfrak{p}}[p^{n}]^D}= \omega_{\mathfrak{p}}/p^{n}\omega_{\mathfrak{p}}= \bigoplus_{\tau \in \Sigma_\mathfrak{p}}\omega_{\mathfrak{p}, \tau}/p^{n}\omega_{\mathfrak{p}, \tau}.
	\end{equation}
Hence the image of $P_{\mathfrak{p}, n}$ through $\mathrm{dlog}_{\mathfrak{p}}$ provides a set of sections $\{s_{\mathfrak{p}, \tau}\}_{\tau \in \Sigma_\mathfrak{p}}$ of $\omega_{\mathfrak{p}, \tau}/\cI_n\omega_{\mathfrak{p}, \tau}\subseteq\omega_{\mathfrak{p}}/\cI_{n}\omega_{\mathfrak{p}}$. 
\end{itemize}
By \cite[Lemma 6.5]{BM}, the set ${\bf s}:=\{s_0\}\cup\bigcup_{\mathfrak{p}\neq \mathfrak{p}_0} \{s_{\mathfrak{p}, \tau}\}_{\tau \in \Sigma_\mathfrak{p}}$ define marked sections for the locally free $\cO_{\mathfrak{IG}_{r,n}}$-module $\Omega$. Hence we construct the formal scheme $\V_0(\Omega,{\bf s})$ over $\mathfrak{IG}_{r,n}$. By construction we have the following tower of formal schemes:  
$$\xymatrix{\V_0(\Omega,{\bf s})\ar[r]&\mathfrak{IG}_{r,n} \ar^{g_n}[r]&\dX_{r}.}$$
For any $\dX_r$-scheme $T$ we have:
\[
\V_0(\Omega,{\bf s})(T)=\{(\rho,\varphi)\in \mathfrak{IG}_{r,n}(T)\times\Gamma(T,\rho^\ast\Omega^\vee);\quad  \varphi(\rho^\ast s_i)\equiv1\mod \cI_n\}
\]

Let ${\bf s}^0:=\bigcup_{\mathfrak{p}\neq \mathfrak{p}_0} \{s_{\mathfrak{p}, \tau}\}_{\tau \in \Sigma_\mathfrak{p}}$ then from (\ref{e:omega for the machinery}) we have:
\[
\V_0(\Omega,{\bf s})=\V_0(\Omega_0,s_0)\times_{\mathfrak{IG}_{r,n}}\V_0(\Omega^0,{\bf s}^0).
\]

As we are interested in locally analytic distributions (rather than functions) we perform the following construction. Let $t_{\mathfrak{p}, \tau}\in\bar\omega_{\dP,\tau}$ be any section such that $\varphi_\tau(s_{\mathfrak{p}, \tau}\wedge t_{\mathfrak{p}, \tau})=1$ and $Q_{{\bf s}^0}\subset \overline{\Omega^0}$ be the direct summand generated by the sections in ${\bf s}^0$. We put ${\bf t}^0:=\bigcup_{\mathfrak{p}\neq \mathfrak{p}_0} \{t_{\mathfrak{p}, \tau}\}_{\tau \in \Sigma_\mathfrak{p}}$ and 
\[
\V_{Q_{{\bf s}^0}}(\Omega^0,{\bf t}^0)\stackrel{f^0}{\longrightarrow}\mathfrak{IG}_{r,n}\stackrel{g_n}{\longrightarrow}\dX_{r}, \qquad \V_{0}(\Omega_0,s_0)\stackrel{f_0}{\longrightarrow}\mathfrak{IG}_{r,n}\stackrel{g_n}{\longrightarrow}\dX_{r}.
\]
The sections $t_{\mathfrak{p}, \tau}$ are well defined modulo $Q_{{\bf s}^0}$ which is fine because remark \ref{DepOnQuot}. We have 
\[
\V_{Q_{{\bf s}^0}}(\Omega^0,{\bf t}^0)(T)=\{(\rho,\varphi)\in \mathfrak{IG}_{r,n}(T)\times\Gamma(T,\rho^\ast(\Omega^0)^\vee);\quad \varphi(\rho^\ast t_{\dP,\tau})\equiv1,\; \varphi(\rho^\ast s_{\dP,\tau})\equiv0\mod \cI_n\}.
\]

The morphism $g_n$ is endowed with an action of $(\cO/p^{n}\cO)^\times$, then both $\V_{Q_{{\bf s}^0}}(\Omega^0,{\bf t}^0)/\dX_r$ and $\V_0(\Omega_0,s_0)/\dX_r$ are equipped with actions of $\cO^\times(1+\cI_n{\rm Res}_{\cO_F/\Z}\G_a)\subseteq{\rm Res}_{\cO/\Z_p}\G_m$ (see \cite[\S 6.3]{BM}).

Since $r\geq n$ by Lemma \ref{weightextends} (with $\lambda={\rm Hdg}^{\frac{p^n}{p-1}}$) the character ${\bf k}_n^0$ extends to a locally analytic character 
\[
{\bf k}_n^0:\cO^\times(1+\cO_F\otimes_{\Z}\cI_n\cO_{\dX_r}\otimes_{\Z_p}\Lambda_n^0)\longrightarrow \cO_{\dX_r}\otimes_{\Z_p}\Lambda_n^0.
\]

\begin{defi} We consider the following sheaves over $\dX_{r}\times\dW_n$:
\begin{eqnarray*}
\cF_n&:=& \left((g_n\circ f^{0})_{\ast}\cO_{\V_{Q_{{\bf s}^0}}(\Omega^0,{\bf t}^0)}\hat\otimes\Lambda_n\right)[{\bf k}_{n}^{\tau_0,0}],\qquad \Omega_0^{{\bf k}_{\tau_0,n}^0}:=\left((g_n\circ f_{0})_{\ast}\cO_{\V_0(\Omega_0,s_0)}\hat\otimes\Lambda_n\right)[{\bf k}_{\tau_0,n}^0], \\
 \Omega^{{\bf k}_{f}}&:=&\left( g_{1, \ast}(\cO_{\mathfrak{IG}_{1}})\hat\otimes\Lambda_n\right)[{\bf k}_{f}].
\end{eqnarray*}
The formal \emph{overconvergent modular sheaf} over $\dX_{r}\times\dW_n$ is defined as 
\[
\Omega^{{\bf k}_{n}}:= \Omega_0^{{\bf k}_{\tau_0,n}^0}\otimes_{\cO_{\dX_{r}\times\dW_n}}\cF_n^\vee\otimes_{\cO_{\dX_{r}\times\dW_n}}\Omega^{{\bf k}_{f}},\qquad\cF_n^\vee:=\Hom_{\dX_{r}\times\dW_n}\left(\cF_n,\cO_{\dX_{r}\times\dW_n}\right).
\]
\end{defi}

\begin{defi} 
A section in $\mathrm{H}^0(\dX_{r}\times\dW_{n}, \Omega^{{\bf k}_{n}})$ is called a \emph{family of locally analytic Overconvergent modular forms}.
\end{defi}

\subsection{Overconvergent modular forms \`a la Katz} \label{ss:overconvergent modular forms a la katz}
Here we give a moduli description of the families of overconvergent modular forms introduced above.
\subsubsection{Notations}\label{NotCD}
 Let $R$ be a complete local $\hat\cO$-algebra. 
 \begin{defi}
Let $k:\cO^{\tau_0\times}\rightarrow R$ be a character and $n \in \N$. We denote by  $C^{k}_n(\cO^{\tau_0},R)$ the $R$-module of the functions $f:\cO^{\tau_0\times}\times \cO^{\tau_0}\rightarrow R$ such that: 
\begin{itemize}
\item $f(tx, ty)=k(t)\cdot f(x, y)$  for each  $t\in\cO^{\tau_0\times}$ and $(x, y) \in \cO^{\tau_0\times}\times \cO^{\tau_0}$;
\item  the function $y\mapsto f(1, y)$ is analytic on the disks $y_0+ p^n\cO^{\tau_0}$  where $y_0$ below to a system of representatives of $\cO^{\tau_0}/p^n\cO^{\tau_0}$.
\end{itemize}
The space of \emph{distributions} is defined by $D^{k}_n(\cO^{\tau_0},R):={\rm Hom}_R(C^{k}_n(\cO^{\tau_0},R),R).$
\end{defi}

\begin{rmk}\label{rmkanalicity}
Note that $C^{k}_n(\cO^{\tau_0},R)\subseteq C^{k}_{n+1}(\cO^{\tau_0},R)$ and if $k=\underline{k}\in\N[\Sigma_F\smallsetminus \{\tau_0\}]$ is a classical weight then $C^{\underline{k}}_0(\cO^{\tau_0},R)$ is the module of analytic functions and naturally contains $\Sym^{\underline{k}}(R^2)$. We obtain a natural projection $D^{\underline{k}}_0(\cO^{\tau_0},R)\rightarrow\Sym^{\underline{k}}(R^2)^\vee$. 
\end{rmk}

We have a natural action of the subgroup $K_0(p)^{\tau_0}\subset\GL_2(\cO^{\tau_0})$ of upper triangular matrices modulo $p$ on $C^{k}_n(\cO^{\tau_0},R)$ and $D^{k}_n(\cO^{\tau_0},R)$ given by: 
\[
(g\ast f)(x,y)= f((x,y)g) \qquad \qquad (g\ast\mu)(f):=\mu(g^{-1}\ast f),
\]
where $g\in K_0(p)^{\tau_0}$, $f\in C^{k}_n(\cO^{\tau_0},R)$ and $\mu\in D^{k}_n(\cO^{\tau_0},R)$. Since $y \mapsto f(1, y)$ is analytic on the disks $y_0+ p^n\cO^{\tau_0}$ this action extends to an action of $K_0(p)^{\tau_0}(1+p^n\M_2(R\otimes_{\Z_p}\cO^{\tau_0}))$.

\subsubsection{Locally analytic distributions}\label{locallanaldist} We are going to describe the elements of 
$H^0(\dX_{r}\times\dW_{n}, \Omega^{{\bf k}_n})$ as rules, extending the classical interpretation due to Katz. As always, we are assuming that $r \geq n$.

Now let $R$ be a $\Lambda_n$-algebra.
Recall that a tuple $(A,\iota,\theta,\alpha^{\dP^0})$ defined over $R$ corresponds to a point in $\dX_r(R)$. We will denote by $w=(f_0,\{(f_\tau,e_\tau)\}_\tau)$ the basis of $\Omega$ such that $f_0$ is a basis of $\Omega_0$ and $\{e_\tau, f_\tau\}$  is a basis of  $\omega_{\dP,\tau}$, where $\dP\neq\dP_0$ and $\tau \in \Sigma_{\mathfrak{p}}$.
As seen in \cite[\S 6.5.2]{BM}, any section $\mu \in H^0(\dX_{r}\times\dW_n, \Omega^{{\bf k}_{n}})$ is characterized by a rule that assigns to each tuple $(A,\iota,\theta,\alpha^{\dP^0},w)$ over $R$, a distribution $\mu(A,\iota,\theta,\alpha^{\dP^0},w)\in D^{{\bf k}_n^{\tau_0}}_n(\cO^{\tau_0},R)$. 
The rule $(A,\iota,\theta,\alpha^{\dP_0},w)\mapsto \mu(A,\iota,\theta,\alpha^{\dP_0},w)$ satisfies:
\begin{itemize}
\item[(B1)] $\mu(A,\iota,\theta,\alpha^{\dP_0},w)$ depends only on the $R$-isomorphism class of $(A,\iota,\theta,\alpha^{\dP_0})$.

\item[(B2)] The formation of $\mu(A,\iota,\theta,\alpha^{\dP_0},w)$ commutes with arbitrary extensions of scalars $R\rightarrow R'$ of $\Lambda_n$-algebras.


\item [(B3-a)] $\mu(A,\iota,\theta,\alpha^{\dP_0}, a^{-1}w)=k_n^{\tau_0}(t)\cdot\mu(A,\iota,\theta,\alpha^{\dP_0},w)$, for all $t\in\Z_p^\times$.

\item [(B3-b)] $g\ast\mu(A,\iota,\theta,\alpha^{\dP_0},wg)=\mu(A,\iota,\theta,\alpha^{\dP_0},w)$, for all $g\in K_0(p)^{\tau_0}$.
\end{itemize}



\begin{rmk}
Note that this description fits with the classical setting of classical Katz modular forms explained in \S \ref{KatzModForm}.
\end{rmk}

\section{Nearly overconvergent modular forms}

Let us consider the sheaf $\cR^1\pi_\ast\Omega_{{\bf A}/\dX}^\bullet$, where ${\bf A}\rightarrow \dX$ is the universal abelian variety. By the moduli interpretation of $\dX$
\[
\cH:=\left(\cR^1\pi_\ast\Omega_{{\bf A}/\dX}^\bullet\right)^{+,2}=\cH_0\oplus\bigoplus_{\dP\neq\dP_0}\omega_\dP.
\]
Notice that $\cH_0$ is endowed with:
\begin{itemize}
\item A connection $\triangledown:\cH_0\rightarrow\cH_0\otimes_{\cO_\dX}\Omega_{\dX/\hat\cO}^1$.

\item A Hodge filtration
\begin{equation}\label{HdgFil}
0\longrightarrow\omega_0 \longrightarrow\cH_0\stackrel{\epsilon}{\longrightarrow}\omega_0^{-1}\longrightarrow 0,
\end{equation}
(see \cite[\S 9.1]{BM}).
\end{itemize} 

Fixing integers $r\geq n$ as above and having the morphisms $\mathfrak{IG}_{r,n} \rightarrow\dX_{r,n}\rightarrow\dX_{r}\rightarrow\dX$, we can base-change the triple $(\cH_0,\triangledown,{\rm Fil}^\bullet)$ over $\mathfrak{IG}_{r,n}$ and denote it the same way. 

Write $\cH_0^\sharp:=\Omega_0+\underline{\delta}^p\cH_0$. Since $\underline{\delta}$ is a locally free $\cO_{\mathfrak{IG}_{r,n}}$-module of rank 1, $\cH_0^\sharp$ is a $\cO_{\mathfrak{IG}_{r,n}}$-module of rank 2 with Hodge filtration 
\begin{equation}\label{exseqHw}
0\longrightarrow\Omega_0 \longrightarrow\cH_0^\sharp\stackrel{\epsilon}{\longrightarrow}\underline{\delta}^p\omega_0^{-1}\longrightarrow 0.
\end{equation}
This implies that $s_0$ is also a marked section of $\cH_0^\sharp$. 
Similarly as in \cite[Lemma 6.1]{AI17} one can prove that the construction of the exact sequence \eqref{exseqHw} is functorial.

\begin{lemma}
For any lift $\tilde s_0\in \Omega_0$ of $s_0$ and any generator $D$ of the space of derivations of $\dX$, the subspace $Q={\rm Hdg}\langle\triangledown(D)(\tilde s_0)\rangle\;({\rm mod} \,\cI_n)$ defines a canonical direct summand $\overline{\cH_0^\sharp}=\langle s_0\rangle\oplus Q$.
\end{lemma}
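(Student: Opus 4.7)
The approach is to verify the assertion locally on $\mathfrak{IG}_{r,n}$, using the Kodaira--Spencer isomorphism to identify the component of $\triangledown(D)(\tilde s_0)$ transverse to $\omega_0$, and then to show that multiplication by $\mathrm{Hdg}$ brings it back into $\cH_0^\sharp$ while keeping it transverse to $\langle s_0\rangle$ modulo $\cI_n$.

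First I would fix an affine open $U\subset\mathfrak{IG}_{r,n}$ on which $\omega_0=\cO_U w_0$ and the invertible ideal $\underline{\delta}$ is generated by $\delta\in\cO_U$. Then $\Omega_0=\cO_U\cdot\delta w_0$, and a lift $\tilde s_0$ of the marked section $s_0$ takes the form $\tilde s_0=u\delta w_0$ with $u\equiv1\bmod\cI_n$. Completing $w_0$ to a local basis $\{w_0,e\}$ of $\cH_0$, the formula $\cH_0^\sharp=\Omega_0+\underline{\delta}^p\cH_0$ provides the local basis $\{\delta w_0,\,\delta^p e\}$ of $\cH_0^\sharp$, so that $\overline{\cH_0^\sharp}$ is free of rank two over $\cO_U/\cI_n$. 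The Kodaira--Spencer isomorphism $\omega_0^{\otimes 2}\stackrel{\sim}{\to}\Omega^1_{\dX/\hat\cO}$, combined with the hypothesis that $D$ is a generator of the tangent sheaf, allows me to write $\triangledown(D)(w_0)=\lambda e+\mu w_0$ with $\lambda\in\cO_U^\times$ and $\mu\in\cO_U$.

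Applying Leibniz I obtain
\[
\mathrm{Hdg}\cdot\triangledown(D)(\tilde s_0)\;=\;\delta^{p-1}\bigl(D(u\delta)+u\delta\mu\bigr)\,w_0\;+\;u\lambda\,\delta^p\,e.
\]
Since $p>2$ one has $\delta\mid\delta^{p-1}$, so the $w_0$-term lies in $\Omega_0$, while the $e$-term manifestly lies in $\underline{\delta}^p\cH_0$; hence this element belongs to $\cH_0^\sharp$. Reading off coordinates in the basis $\{\delta w_0,\delta^p e\}$, the pair $\{s_0,\,\mathrm{Hdg}\triangledown(D)(\tilde s_0)\bmod\cI_n\}$ has transition matrix $\bigl(\begin{smallmatrix}1 & \ast\\ 0 & u\lambda\end{smallmatrix}\bigr)\bmod\cI_n$, with determinant the unit $u\lambda\in(\cO_U/\cI_n)^\times$. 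This establishes the local direct-sum decomposition $\overline{\cH_0^\sharp}=\langle s_0\rangle\oplus Q$, which glues since everything is canonical in the chosen data.

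For the canonicality of $Q$ itself I would check independence of the two choices. Rescaling $D$ by a unit $\eta$ merely rescales the generator of $Q$ by $\eta\bmod\cI_n$, so $Q$ is unchanged. If $\tilde s_0$ is replaced by $\tilde s_0+\nu$ with $\nu=\gamma\tilde s_0$ and $\gamma\in\cI_n$, Leibniz gives $\mathrm{Hdg}\triangledown(D)(\nu)=\delta^{p-1}D(\gamma)\tilde s_0+\gamma\cdot\mathrm{Hdg}\triangledown(D)(\tilde s_0)$; the second summand lies in $\cI_n\cH_0^\sharp$ directly, and for the first it suffices to prove the inclusion $D(\cI_n)\subset\cI_n$, which follows by differentiating the relation $\gamma\delta^{p^n}\in p^n\cO$ coming from $\cI_n=p^n\underline{\delta}^{-p^n}$. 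The main subtlety I expect, requiring a few extra lines invoking Lemma \ref{lemOmegaOmega}, is that $D$ is a derivation of $\dX$ rather than of $\mathfrak{IG}_{r,n}$: one must explain how $D$ acts on $\cO_{\mathfrak{IG}_{r,n}}$-valued data and verify that the Kodaira--Spencer isomorphism survives this base-change modulo $\cI_n$, up to a power of $\underline{\delta}$ which is absorbed by the factor $\mathrm{Hdg}=\underline{\delta}^{p-1}$.
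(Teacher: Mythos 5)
Your proof is essentially correct and follows the same strategy as the paper: work locally with $\rho^\ast\omega_0 = Rf$, $\rho^\ast\cH_0 = Rf \oplus Re$, use the Kodaira--Spencer isomorphism to identify the $e$-coefficient of $\triangledown(D)(\delta f)$ as a unit, apply Leibniz, and observe that the $\mathrm{Hdg}=\underline{\delta}^{p-1}$ factor pushes the result back into $\cH_0^\sharp$ with the right shape. The paper just runs the two pieces of the argument in the opposite order (independence of the choices of $D$ and $\tilde s_0$ first, local computation second), and the Leibniz computation it records is your equation with $u=1$.

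Two small remarks. First, your intermediate assertion $D(\cI_n)\subset\cI_n$ is true, but the verification by differentiating $\gamma\delta^{p^n}\in p^n R$ produces a term $p^{2n}\delta^{-1}$ whose membership in $p^nR$ requires $\underline\delta\mid p$ over $\dX_r$ (which follows from $\mathrm{Hdg}^{p^{r+1}}\mid p$); you use this implicitly, and the paper avoids needing the stronger statement by only controlling $\mathrm{Hdg}\cdot D\alpha$, which is what actually enters the computation. Second, your parametrization of the ambiguity in $\tilde s_0$ as $\tilde s_0+\gamma\tilde s_0$ with $\gamma\in\cI_n$ is locally equivalent to the paper's $\tilde s_0+\alpha s$ with $\alpha\in\cI_n$, $s\in\Omega_0$, since $\tilde s_0$ generates $\Omega_0$ after your normalization $u\equiv1\bmod\cI_n$. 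The subtlety you flag at the end, that $D$ lives on $\dX$ rather than $\mathfrak{IG}_{r,n}$, is indeed the point where Lemma~\ref{lemOmegaOmega} is relevant; the paper elides it in the local computation by pulling back to $\Spf(R)$, and you are right that a careful writeup should say a word about why the Kodaira--Spencer normalization survives the pullback modulo $\cI_n$, but this is bookkeeping rather than a genuine gap.
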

\begin{proof}
First, let me remark that the definition of $Q$ does not depend on the choice of $D$ nor the lift $\tilde s_0$. Indeed, for any $\alpha\in\cI_n=p^n\underline{\delta}^{-p^n}$
\[
{\rm Hdg}\triangledown(D)(\tilde s_0+\alpha s)={\rm Hdg}(\triangledown(D)(\tilde s_0)+D\alpha s+\alpha\triangledown(D)(s))\equiv {\rm Hdg}\triangledown(D)(\tilde s_0)+{\rm Hdg}D\alpha s\mod\,\cI_n,
\]
But $D\alpha\in p^{2n}\underline{\delta}^{-p^n-1}=p^n\underline{\delta}^{-1}\cI_n$, thus ${\rm Hdg}D\alpha s=\underline{\delta}^{p-1}D\alpha s\equiv 0$ modulo $\cI_n$.

In order to prove the claim, we will work locally. Let $\rho:\Spf(R)\rightarrow \mathfrak{IG}_{r,n}$ be a morphism of formal schemes without $p$-torsion such that $\rho^\ast\omega_0$, and $\rho^\ast\cH_0$ are free $R$-modules of rank 1 and 2, respectively. We choose basis $\rho^\ast\omega_0=R f$, $\rho^\ast\cH_0=Rf+Re$. Recall that the Kodaira-Spencer isomorphism $KS$ is obtained from restricting $\triangledown$ to $\omega_0$ and composing with the morphism $\epsilon$ of \eqref{HdgFil}. Thus, if the derivation $D$ is dual to $\Theta=KS(f,e)\in \Omega_{\dX/\Z_p}^1$, we have that 
\[
\triangledown(D)(f)=af+e,\quad\mbox{for some }a\in R.
\] 
Assume also that $\rho^\ast \underline\delta=\delta R$. Moreover, we can choose $\delta$ so that $\tilde s_0=\delta f$. Hence we obtain
\[
\triangledown(D)(\tilde s_0)=\triangledown(D)(\delta f)=(D\delta +\delta a)f+\delta e.
\]
Since $\rho^\ast{\rm Hdg}=\delta^{p-1}R$, we obtain
\[
{\rm Hdg}\langle\triangledown(D)(\tilde s_0)\rangle= \delta^{p-1}\triangledown(D)(\tilde s_0)R=((\delta^{p-2}D\delta +\delta^{p-1} a)\delta f+\delta^p e)R\subset \cH_0^\sharp.
\]
Since $\rho^\ast\cH_0^\sharp=\delta fR\oplus \delta^pe E$, we obtain that $\overline{\cH_0^\sharp}=\langle s_0\rangle\oplus Q$. 
\end{proof}
The above lemma provides all the ingredients to construct the formal vector bundle $\V_Q(\cH_0^\sharp,s_0)\rightarrow\dX_r$.
Similarly as in \cite[\S 6.3]{BM}, we define an action of $\Z_p^\times(1+\cI_n\G_a)$ on $\V_Q(\cH_0^\sharp,s_0)\rightarrow\dX_r$ as follows: For any $\dX_r$-scheme $T$
\[
\V_Q(\cH_0^\sharp,s_0)(T)=\{(\rho,\varphi)\in \mathfrak{IG}_{r,n}(T)\times\Gamma(T,\rho^\ast(\cH_0^\sharp)^\vee);\quad  \varphi(\rho^\ast s_0)\equiv1,\; \varphi(\rho^\ast Q)\equiv 0\mod \cI_n\},
\]
and the action of $\lambda(1+\gamma)\in\Z_p^\times(1+\cI_n\G_a)$ is given by $\lambda(1+\gamma)\ast(\rho,\varphi):=(\lambda\rho,\lambda(1+\gamma)\ast\varphi)$, where $\lambda\ast\varphi(w)=\varphi(\lambda w)$.

\begin{defi} We consider the following sheaf over $\dX_{r}\times\dW_n$:
\[
\W_0^{{\bf k}_{\tau_0,n}^0}:=\left((g_n\circ f_{0})_{\ast}\cO_{\V_Q(\cH_0^\sharp,s_0)}\hat\otimes\Lambda_n\right)[{\bf k}_{\tau_0,n}^0], 
 \]
 corresponding to sections such that $t\ast s={\bf k}_{\tau_0,n}^0(t)s$, for all $t\in \Z_p^\times(1+\cI_n\G_a)$.
The formal \emph{nearly overconvergent modular sheaf} over $\dX_{r}\times\dW_n$ is defined as 
\[
\W^{{\bf k}_{n}}:= \W_0^{{\bf k}_{\tau_0,n}^0}\otimes_{\cO_{\dX_{r}\times\dW_n}}\cF_n^\vee\otimes_{\cO_{\dX_{r}\times\dW_n}}\Omega^{{\bf k}_{f}}.
\]
\end{defi}

\begin{defi} 
A section in $\mathrm{H}^0(\dX_{r}\times\dW_{n}, \W^{{\bf k}_{n}})$ is called a \emph{family of locally analytic nearly overconvergent modular forms}.
\end{defi}

The inclusion $\Omega_0\subset \cH_0^\sharp$ provides a filtration of locally free sheaves with marked sections $(\Omega_0,s_0)\hookrightarrow (\cH_0^\sharp,s_0)$. By \S \ref{filFVB}, the sheaf $(g_n\circ f_{0})_{\ast}\cO_{\V_Q(\cH_0^\sharp,s_0)}$ has a canonical filtration ${\rm Fil}_\bullet(g_n\circ f_{0})_{\ast}\cO_{\V_Q(\cH_0^\sharp,s_0)}:=(g_n\circ f_{0})_{\ast}{\rm Fil}_\bullet\cO_{\V_Q(\cH_0^\sharp,s_0)}$. 

Analogously as in \cite[Theorem 3.11.]{AI17}, the action of $\Z_p^\times(1+\cI_n\G_a)$ on $(g_n\circ f_{0})_{\ast}\cO_{\V_Q(\cH_0^\sharp,s_0)}$ preserves the filtration $(g_n\circ f_{0})_{\ast}{\rm Fil}_\bullet\cO_{\V_Q(\cH_0^\sharp,s_0)}$. Moreover, if we define 
\[
{\rm Fil}_n\W_0^{{\bf k}_{\tau_0,n}^0}:=\left((g_n\circ f_{0})_{\ast}{\rm Fil}_n\cO_{\V_Q(\cH_0^\sharp,s_0)}\hat\otimes\Lambda_n\right)[{\bf k}_{\tau_0,n}^0]\subset \left((g_n\circ f_{0})_{\ast}\cO_{\V_Q(\cH_0^\sharp,s_0)}\hat\otimes\Lambda_n\right)[{\bf k}_{\tau_0,n}^0]=\W_0^{{\bf k}_{\tau_0,n}^0}.
\]
Similarly as in \cite[\S 3.3.1]{AI17} we have
\begin{itemize}
\item[(i)] ${\rm Fil}_n\W_0^{{\bf k}_{\tau_0,n}^0}$ is a locally free $\cO_{\dX_{r}\times\dW_n}$-module for the Zariski topology on $\dX_{r}\times\dW_n$;

\item[(ii)] $\W_0^{{\bf k}_{\tau_0,n}^0}$ is the $p$-adic completion of $\lim_n{\rm Fil}_n\W_0^{{\bf k}_{\tau_0,n}^0}$;

\item[(iii)] ${\rm Fil}_0\W_0^{{\bf k}_{\tau_0,n}^0}\simeq \Omega_0^{{\bf k}_{\tau_0,n}^0}$. 
\end{itemize}
Moreover, if we define ${\rm Fil}_n\W^{{\bf k}_{n}}:={\rm Fil}_n\W_0^{{\bf k}_{\tau_0,n}^0}\otimes_{\cO_{\dX_{r}\times\dW_n}}\cF_n^\vee\otimes_{\cO_{\dX_{r}\times\dW_n}}\Omega^{{\bf k}_{f}}$, we obtain an increasing filtration $\{{\rm Fil}_n\W^{{\bf k}_{n}}\}_n$ by directs summands such that claims $(ii)$ and $(iii)$ hold replacing $\Omega_0^{{\bf k}_{\tau_0,n}^0}$ with $\Omega^{{\bf k}_{n}}$.

Finally, if $k\in \N$ is a classical weight then we have a canonical identification
\[
{\rm Sym}^k(\cH_0)[1/p]={\rm Fil}_k(\W_0^k)[1/p]:=k^\ast \left({\rm Fil}_k(\W_0^{{\bf k}^0_{\tau_0,n}})\otimes_{\cO_{\dX_{r}\times\dW_n}}\Omega^{{\bf k}_{f}}\right)[1/p].
\]
as sheaves on the corresponding adic space $\cX_{r}\times\cW_n$ compatible with the Hodge filtration on ${\rm Sym}^k(\cH_0)$.

\begin{rmk}
We omit the proof of the above facts because it is completely analogous to \cite[\S3.3.3]{AI17} using results of \cite{AIP-halo} and \S \ref{filFVB}. 
\end{rmk}

\subsection{Gauss-Manin connections}


Consider the morphism of adic spaces $\overline{\mathcal{IG}}_{r,n}\rightarrow\mathcal{IG}_{r,n}$ defined by the trivializations $\cG_0[\dP_0^n]\simeq(\Z/p^n\Z)^2$ compatible with the trivializations $D_n\simeq \Z/p^n\Z$.
Let $\overline{\mathfrak{IG}}_{r,n}\rightarrow\mathfrak{IG}_{r,n}$ be the normalization. It follows from \cite[Proposition 6.3]{AI17} and \S \ref{connFVB} that over $\overline{\mathfrak{IG}}_{r,n}$ the sheaf 
\[
\overline{\W}_0^{{\bf k}_{\tau_0,n}^0}:=\left((g_n\circ f_{0})_{\ast}\cO_{\V_0(\cH_0^\sharp,s_0)}\hat\otimes\Lambda_n\right)[{\bf k}_{\tau_0,n}^0], 
\]
admits an integrable connection relatively to $\Lambda_{\tau_0,n}^0$ for which ${\rm Fil}_\bullet\overline{\W}_0^{{\bf k}_{\tau_0,n}^0}$ satisfies Griffiths' transversality.

Recall that, by \eqref{univK}, the universal character ${\bf k}_{\tau_0,n}^0$ is given by
\[
{\bf k}_{\tau_0,n}^0(\beta)=\sum_{j=0}^{p^{n-1}-1}(1+T)^j\cdot 1_{\exp(jp)+p^{n}\Z_p}(\beta)\cdot\sum_{i\geq 0}\binom{w_n}{i}\left(\frac{\beta-\exp(jp)}{\exp(jp)}\right)^i.
\]
where $w_n=p^{-1}\log(1+T)\in p^{-n+1}\Lambda_{\tau_0,n}^0$.

\begin{thm}\label{GMthm}
The above connection descends to an integrable connection
\[
\bigtriangledown_{{\bf k}_{\tau_0,n}}:\W_0^{{\bf k}_{\tau_0,n}^0}\longrightarrow \W_0^{{\bf k}_{\tau_0,n}^0}\otimes_{\cO_{\dX}}\underline{\delta}^{-c_n}\Omega^1_{\dX/\Lambda_n};\qquad \dX:=\dX_{r}\times\dW_n,
\]
for some $c_n\in\N$ depending on $n$, such that the induced $\cO_{\dX}$-linear map on the $n$ graded piece
\[
{\rm Gr}_h(\bigtriangledown_{{\bf k}_{\tau_0,n}}):{\rm Gr}_n(\W_0^{{\bf k}_{\tau_0,n}^0})[1/p]\longrightarrow {\rm Gr}_{h+1}(\W_0^{{\bf k}_{\tau_0,n}^0})\otimes_{\cO_{\dX}}\Omega^1_{\dX/\Lambda_n}[1/p]
\]
is an isomorphism times $w_n-h$. 
\end{thm}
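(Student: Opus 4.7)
The plan is to follow the strategy of \cite[Theorem 3.11]{AI17}, adapted to the new formal vector bundle $\V_Q(\cH_0^\sharp,s_0)$. The connection $\overline{\bigtriangledown}$ on $\overline{\W}_0^{{\bf k}_{\tau_0,n}^0}$ over $\overline{\mathfrak{IG}}_{r,n}$ is already constructed by the general formalism of \S\ref{connFVB} applied to $(\cH_0^\sharp,\triangledown,s_0)$, so the task is twofold: first, descend $\overline{\bigtriangledown}$ from $\V_0(\cH_0^\sharp,s_0)$ to $\V_Q(\cH_0^\sharp,s_0)$, picking up the denominator $\underline{\delta}^{-c_n}$; second, identify the induced map on graded pieces as a Kodaira--Spencer-type isomorphism multiplied by $w_n-h$.

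For the descent I would work locally as in \S\ref{filFVB}. Over an open affine $U={\rm Spf}(R)\subset\dX_r$ trivialising $\cH_0^\sharp$ compatibly with $\Omega_0\subset\cH_0^\sharp$ and with $\underline{\delta}|_U=\delta R$, one has $\V_0(\cH_0^\sharp,s_0)|_U={\rm Spf}(R\langle Z,Y\rangle)$ and $\V_Q(\cH_0^\sharp,s_0)|_U={\rm Spf}(R\langle Z,T\rangle)$, with $g_Q^\ast(Y)=\alpha T$ for a generator $\alpha\in\cI_n=p^n\underline{\delta}^{-p^n}$. Under the identification \eqref{isoFils}, Griffiths' transversality of $\overline{\bigtriangledown}$ transports to a map ${\rm Fil}_h\W_0^{{\bf k}_{\tau_0,n}^0}\to{\rm Fil}_{h+1}\W_0^{{\bf k}_{\tau_0,n}^0}\otimes\underline{\delta}^{-1}\Omega_{\dX/\Lambda_n}^1$ on each filtration piece, but the Leibniz rule applied inside the power series expansion of a weight-${\bf k}_{\tau_0,n}^0$ section also produces derivatives of the universal character, hence extra denominators governed by $w_n\in p^{-n+1}\Lambda_{\tau_0,n}^0$ and by Remark \ref{valbinom}. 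Packaging all these denominators into a single power of $\underline{\delta}$ produces the integer $c_n$; integrability is then inherited from integrability of $\overline{\bigtriangledown}$ via the isomorphism $\V_0[\alpha^{-1}]=\V_Q[\alpha^{-1}]$.

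For the graded piece statement I would combine \eqref{gradpiecD} with the explicit formula \eqref{univK}. By the first, ${\rm gr}_h(\overline{\bigtriangledown_0})$ is $\cO_\dX$-linear and, via the identification \eqref{gradpiec}, is essentially Kodaira--Spencer tensored with the identity on ${\rm Sym}^h(\cH_0^\sharp/\Omega_0)$; in particular it is an isomorphism after inverting $p$. The character-weight condition is imposed on the $Z$-direction, whereas the filtration index $h$ counts powers of $T$. Differentiating the locally analytic expression \eqref{univK} in the direction of $\beta$ produces, through the derivative of $\binom{w_n}{i}(\beta/\exp(jp)-1)^i$, the factor $w_n$ on top-order terms, while the $\Z_p^\times$-homogeneity of the $T^h$ factor contributes a shift of $-h$; the net effect on ${\rm Gr}_h$ is multiplication of the Kodaira--Spencer isomorphism by $w_n-h$, as required.

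The main obstacle will be the careful bookkeeping of denominators: a priori, applying $\overline{\bigtriangledown_0}$ to a power series in $Z,T$ obtained from a genuinely locally analytic universal character produces infinitely many correction terms, and one must ensure that these are still captured by $\underline{\delta}^{-c_n}\Omega^1_{\dX/\Lambda_n}$ uniformly in $h$ for the fixed radius of overconvergence. I would verify this on Serre--Tate coordinates, as in \cite[\S 3.3]{AI17} and the analogous calculation in \cite{BM}: the $q$-expansion of $\bigtriangledown_{{\bf k}_{\tau_0,n}}$ becomes $q\, d/dq$ composed with the character action, which preserves integral expansions and reduces the control of $c_n$ to a finite combinatorial estimate checkable against Remark \ref{valbinom}. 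The three remaining claims—$\cO_\dX$-linearity on graded pieces, compatibility of the descended $\bigtriangledown_{{\bf k}_{\tau_0,n}}$ with the filtration ${\rm Fil}_\bullet\W_0^{{\bf k}_{\tau_0,n}^0}$, and the fact that the leading coefficient $w_n-h$ is nonzero generically—then follow formally from the setup.
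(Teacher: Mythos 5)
Your high-level plan matches the paper: work locally on $\V_Q(\cH_0^\sharp,s_0)={\rm Spf}(R\langle Z,T\rangle)$, descend the $\V_0$-connection, and exhibit the graded map via an explicit Leibniz computation. However, there is a genuine gap in the descent step, which is precisely where the theorem is hardest. To descend the connection through $g_Q^\ast(Y)=\alpha T$ you must show that, after multiplying by a power of $\underline\delta$, the connection matrix in the basis $(\delta f,\delta^p e)$ has its $(2,1)$-entry $C$ (the coefficient of $\delta f$ in $\triangledown(\delta^p e)$) divisible by $\cI_n$, so that $\alpha^{-1}C\in R$. You do not address this; you only observe that $\underline\delta$-denominators will appear and invoke \eqref{isoFils}, but \eqref{isoFils} only gives an isomorphism after inverting $\alpha$ — it does not by itself prove the integrality needed for descent. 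The paper establishes this $\cI_n$-divisibility by a real argument: passing to the ordinary locus $R[1/\delta]$, invoking Katz's theorem (\cite[Main Theorem 4.3.2]{K81}) that sections in the image of ${\rm dlog}:T_p\cG_0\to\Omega_0$ are annihilated by $\triangledown(d)^2$, writing $\delta f=s+s_0$ with $s\in{\rm dlog}(T_p\cG_0)$ and $s_0\in\delta^{-a_n}\cI_n\Omega_0$, and then tracking denominators through the Leibniz rule. Without this input about the horizontal (unit-root) splitting near the ordinary locus, the $\cI_n$-divisibility of $C$ has no reason to hold, and the descent to $\V_Q$ does not go through.

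A second, more minor issue is the attribution of the $\underline\delta^{-c_n}$ denominator. You trace it to "derivatives of the universal character" and to Remark \ref{valbinom}; but those estimates are already used in the very definition of $\W_0^{{\bf k}_{\tau_0,n}^0}$ to show that $P_n(Z)=\sum_i\binom{w_n}{i}\alpha^i Z^i$ converges. In the paper's proof the denominator $\underline\delta^{-c_n}$ comes from the geometry of $\cH_0^\sharp$: from $D\alpha\in p^n\underline\delta^{-1}\cI_n$, from the estimate $\triangledown(d)^2(\delta f)\in\delta^{-b_n}\cI_n$, and from Lemma \ref{lemOmegaOmega}. The final graded-piece coefficient $(w_n-h)$ then falls out of a one-line Leibniz computation on $P_n(Z)(T/(1+\alpha Z))^h$; it does not require "differentiating \eqref{univK} in the $\beta$-direction" or separating $Z$- versus $T$-homogeneity as you propose, and the observation that $B=\delta^{c_n+1-p}$ (so that $\alpha B$ becomes invertible after inverting $p$) is what makes ${\rm Gr}_h(\bigtriangledown_{{\bf k}_{\tau_0,n}})$ an isomorphism up to the factor $w_n-h$. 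You should supply the Katz-type horizontality argument and redo the denominator bookkeeping in terms of the de Rham data rather than the character.
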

\begin{proof}
Let us work locally. Let $\rho:\Spf(R)\rightarrow \mathfrak{IG}_{r,n}$ be a morphism of formal schemes without $p$-torsion such that $\rho^\ast\omega_0$, and $\rho^\ast\cH_0$ are free $R$-modules of rank 1 and 2, respectively. We choose basis $\rho^\ast\omega_0=R f$, $\rho^\ast\cH_0=Rf+Re$ and assume that $\underline{\delta}=\delta R$. Moreover, we assume that $\delta f\equiv s_0$ and $\langle\delta^p e\rangle\equiv Q$ modulo $\cI_n$. Thus, if the derivation $d$ is dual to the image of the Kodaira-Spencer isomorphism $\Theta=KS(f,e)\in \Omega_{R/\Z_p}^1$, we have that 
\begin{equation}\label{eqaux1}
\triangledown(d)(\delta f)=af+\delta e,\quad\mbox{for some }a\in \cI_n.
\end{equation}

Computing things in $R[1/\delta]$, the setting becomes ordinary. It can be deduced from \cite[Main Theorem 4.3.2]{K81} that any section $s$ in the image of the map ${\rm dlog}:T_p\cG_0\rightarrow \Omega_0$ satisfies $\triangledown(d)^2(s)=0$. Since $\delta f$ is in the image of ${\rm dlog}$ modulo $\cI_n$, we have $\delta f=s+s_0$, where $s\in {\rm dlog}(T_p\cG_0)$ and $s_0\in \delta^{-a_n}\cI_n\Omega_0$, for some $a_n\in\N$. Since $\triangledown(d)\delta^{-a_n}\cI_n\Omega_0\subseteq \delta^{-a_n-1}\cI_n\Omega_0$, 
we deduce that $\triangledown(d)^2(\delta f)\in \delta^{-b_n}\cI_n$, for some $b_n\in\N$. Applying the Leibniz rule, we compute that
\[
\triangledown(d)(\delta^p e)=(\delta^{p-3}a(d\delta-a)-\delta^{p-2}da)\delta f+\delta^{-1}((p-1)d\delta-a)\delta^p e+\delta^{p-1}\triangledown(d)^2(\delta f).
\]
Since $da\in p^n\delta^{-1}\cI_n$, this implies that, for some $c_n\in\N$,
\[
\triangledown(\delta^{c_n}d)(\delta f)=A\delta f+B\delta^p e,\qquad \triangledown(\delta^{c_n}D)(\delta^p e)=C\delta f+D\delta^p e,
\]
where $A,C\in\cI_n$ and $B,D\in R$.

Fix a generator $\alpha\in\cI_n$.
As explained in \cite[\S 6.4]{BM}, the sections $\rho^\ast\cO_{\V_Q(\cH_0^\sharp,s_0)}(\Spf(R))$ are given by
\[
\rho^\ast\cO_{\V(\cH_0^\sharp)}=R\langle X,Y\rangle\longrightarrow R\langle Z,T\rangle=\rho^\ast\cO_{\V_Q(\cH_0^\sharp,s_0)},\qquad X\mapsto1+\alpha Z,\quad Y\mapsto\alpha T,
\]
where $X$ corresponds to $\delta f$ and $Y$ corresponds to $\delta^p e$. This implies that the connection on $\rho^\ast\cO_{\V(\cH_0^\sharp)}$ is given by $\triangledown(\delta^{c_n}d)(X)=AX+BY$, $\triangledown(\delta^{c_n}d)(Y)=CX+DY$. We observe that this connection descends to $\rho^\ast\cO_{\V_Q(\cH_0^\sharp,s_0)}$, indeed we can define
\[
\triangledown(\delta^{c_n}d)(Z)=\alpha^{-1}A(1+\alpha Z)+BT,\qquad \triangledown(\delta^{c_n}d)(T)=\alpha^{-1}C(1+\alpha Z)+DT,
\]
since $\alpha^{-1}A,\alpha^{-1}C\in R$. Thus, by Lemma \ref{lemOmegaOmega}, after possibly enlarging $c_n$ we obtain
\[
\bigtriangledown:(g_n\circ f_0)_\ast\cO_{\V_Q(\cH_0^\sharp,s_0)}\longrightarrow (g_n\circ f_0)_\ast\cO_{\V_Q(\cH_0^\sharp,s_0)}\otimes_{\cO_{\dX}}\underline{\delta}^{-c_n}\Omega^1_{\dX/\Lambda_n}. 
\]
In \cite[\S 6.4]{BM} we have also the local description of $\rho^\ast\W_0^{{\bf k}_{\tau_0,n}^0}$: It is given by 
\[
\rho^\ast\W_0^{{\bf k}_{\tau_0,n}^0}=P_n(Z)\cdot R\left\langle\frac{T}{1+\alpha Z}\right\rangle,\qquad P_n(Z)=\sum_{i\geq 0}\binom{w_n}{i}\alpha^i Z^i,
\]
being $P_n$ the analytic extension of ${\bf k}_{\tau_0,n}^0$ around $1+\cI_nR\langle Z\rangle$. We compute using Leibniz rule:
\begin{eqnarray*}
&&\triangledown(\delta^{c_n}d)\left(P_n(Z)\left(\frac{T}{1+\alpha Z}\right)^m\right)=\\
&&=P_n(Z)\left(\frac{T}{1+\alpha Z}\right)^{m-1}\left((w_n-m)\alpha B\left(\frac{T}{1+\alpha Z}\right)^2+((w_n-m)A+mD)\left(\frac{T}{1+\alpha Z}\right)+m\alpha^{-1}C\right).
\end{eqnarray*}
From this we deduce the first assertion.

Moreover, the above computation shows that the induced morphism in graded peaces is given by multiplication by $(w_n-m)\alpha B$, and recall by \eqref{eqaux1} that $B=\delta^{c_n+1-p}$. Hence, once we invert $p$ (and therefore $\underline{\delta}$) we obtain the second assertion.
\end{proof}

Together with the connection on $\Omega^{{\bf k}_{f}}[1/p]$ provided by the universal derivation on $\mathfrak{IG}_{r,n}$, the above $\bigtriangledown_{{\bf k}_{\tau_0,n}}$ induces a connection on $\W^{{\bf k}_{\tau_0,n}}:=\W_0^{{\bf k}^0_{\tau_0,n}}\otimes_{\cO_{\dX_{r}\times\dW_{\tau_0,n}}}\Omega^{{\bf k}_{f}}[1/p]$ compatible with the identification 
\[
{\rm Sym}^k(\cH_0)[1/p]=k^\ast \left({\rm Fil}_k(\W_0^{{\bf k}^0_{\tau_0,n}})\otimes_{\cO_{\dX_{r}\times\dW_{\tau_0,n}}}\Omega^{{\bf k}_{f}}\right)[1/p],
\]
and the Gauss-Manin connection on ${\rm Sym}^k(\cH_0)[1/p]$.


\subsection{Hecke operator $U_{\dP_0}$}

Let us consider the morphisms $p_1,p_2:\dX_{r+1}\rightarrow \dX_{r}$ defined on the universal abelian variety ${\bf A}\mapsto {\bf A}$ and ${\bf A}\mapsto {\bf A}'={\bf A}/\tilde C_1$, where $\tilde C_1\subset {\bf A}[\dP_0]$ is the $\cO_D$-submodule generated by the canonical subgroup $C_1$.
By \cite[Proposition 3.24]{AI17} and \cite[Proposition 6.5]{AI17}, the dual isogeny $\lambda:{\bf A}'\rightarrow {\bf A}$ induces morphisms of $\cO_{\dX}$-modules
\[
\cU:p_{2,\ast}p_1^\ast(\W_0^{{\bf k}^0_{\tau_0,n}})\longrightarrow p_{2,\ast}p_2^\ast(\W_0^{{\bf k}^0_{\tau_0,n}}),\qquad \cU:p_{2,\ast}p_1^\ast(\W^{{\bf k}_{\tau_0,n}})\longrightarrow p_{2,\ast}p_2^\ast(\W^{{\bf k}_{\tau_0,n}}),
\]
which commute with $\bigtriangledown_{{\bf k}_{\tau_0,n}}$ and preserve filtrations.

Similarly as in \cite[Proposition 3.3]{AIP-halo}, the morphism $p_2:\dX_{r+1}\rightarrow \dX_{r}$ is finite flat of degree $p$ hence there is a well defined trace map with respect to $\cO_{\dX_r}\rightarrow p_{2,\ast}(\cO_{\dX_{r+1}})$. We define the $U_{\dP_0}$-operator
\[
U_{\dP_0}:H^0(\dX,\W^{{\bf k}_{\tau_0,n}})\stackrel{\cU\circ p_1^\ast}{\longrightarrow}H^0(\dX,p_{2,\ast}p_2^\ast(\W^{{\bf k}_{\tau_0,n}}))\stackrel{\frac{1}{p}{\rm Tr}_{p_2}}{\longrightarrow}H^0(\dX,\W^{{\bf k}_{\tau_0,n}})[1/p].
\]


\begin{lemma}\label{lemFilm}
The morphism induced by $U_{\dP_0}$ on $H^0(\dX,\W^{{\bf k}_{\tau_0,n}}/{\rm Fil}_m\W^{{\bf k}_{\tau_0,n}})$ is nilpotent modulo $p^{[m/2]-1}$, for $m\geq p$ where $[\cdot]$ denotes the integral part.
\end{lemma}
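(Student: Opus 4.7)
The plan is to establish the claim by showing that $U_{\dP_0}$ strictly preserves the filtration ${\rm Fil}_\bullet\W^{{\bf k}_{\tau_0,n}}$ and that the induced action on each graded piece ${\rm Gr}_h$ for $h > m$ is already divisible by $p^{[m/2]-1}$; from this, a single application of $U_{\dP_0}$ annihilates $\W^{{\bf k}_{\tau_0,n}}/{\rm Fil}_m$ modulo $p^{[m/2]-1}$, and nilpotency is immediate. The preservation of ${\rm Fil}_\bullet$ follows from the functoriality of the $\V_Q(-,-)$ construction applied to the dual isogeny $\lambda:{\bf A}'\to{\bf A}$, whose pullback sends $\Omega_0\subset\cH_0^\sharp$ into the analogous subsheaf on ${\bf A}'$.

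The heart of the argument is the local computation of $\cU$ on the graded pieces. Working locally as in the proof of Theorem~\ref{GMthm}, I would use the description $\rho^\ast\W_0^{{\bf k}_{\tau_0,n}^0}=P_n(Z)\cdot R\langle T/(1+\alpha Z)\rangle$ so that ${\rm Gr}_h$ corresponds to monomials of degree $h$ in $T/(1+\alpha Z)$. Since $\tilde C_1$ is multiplicative of order $p$, its dual $\ker(\lambda)$ is étale, so $\lambda^*$ is an isomorphism on the subsheaf $\Omega_0$ (the $Z$-direction) but acts as $p$ times an isomorphism on the quotient $\cH_0^\sharp/\Omega_0\simeq\underline{\delta}^p\omega_0^{-1}$ (the $T$-direction): this reflects that the dual isogeny $\mu=\lambda^\vee$ has connected kernel, so its cotangent map divides the ``unit-root'' direction by $p$. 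Therefore $\cU$ sends $T/(1+\alpha Z)\mapsto p\cdot T'/(1+\alpha Z')$ up to a unit and, on ${\rm Gr}_h$, is divisible by $p^h$; after the $\frac{1}{p}{\rm Tr}_{p_2}$ normalization one obtains divisibility at least $p^{h-1}$.

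The subtle point is to convert this into the sharper bound $p^{[h/2]-1}$ after accounting for the $\underline{\delta}$-denominators. These come from two sources: the identification of $\V_Q(\cH_0^\sharp,s_0)$-sections as polynomials in $T/(1+\alpha Z)$, where the duality $\cH_0^{\sharp\vee}\hookrightarrow\cH_0^{\vee}$ introduces a denominator $\underline{\delta}^{-ph}$ on ${\rm Gr}_h$; and the $\underline{\delta}^p$ appearing in $\cH_0^\sharp=\Omega_0+\underline{\delta}^p\cH_0$. I would then use the defining relation of $\dX_r$, namely $\mathrm{Hdg}^{p^{r+1}}\cdot p^{-1}\in\cO_{\dX_r}$, equivalently $\underline{\delta}^{(p-1)p^{r+1}}$ divides $p$ up to units, to absorb $\underline{\delta}^{-1}$-denominators into controlled $p$-denominators. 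The hypothesis $m\geq p$ is what ensures the ratio between the $p^h$-gain from $\lambda^*$ and the $\underline{\delta}$-loss from duality is bounded below by $[h/2]-1$ rather than being eaten up entirely.

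The main obstacle will be this bookkeeping of the interplay between the $p^h$-gain from $\lambda^*$ on the $T$-variable, the $\underline{\delta}^{-ph}$ loss from dualizing the local model of $\cH_0^\sharp$, and the $\mathrm{Hdg}$-nilpotence on $\dX_r$; this is the precise step that produces the explicit bound $[m/2]-1$ and explains the threshold $m\geq p$. Once the graded estimate is in hand, the rest is formal: any section of $\W^{{\bf k}_{\tau_0,n}}/{\rm Fil}_m$ is a $p$-adically convergent sum of components in ${\rm Gr}_h$ for $h\geq m+1$, on each of which $U_{\dP_0}$ is divisible by $p^{[(m+1)/2]-1}\geq p^{[m/2]-1}$, giving the nilpotency modulo $p^{[m/2]-1}$.
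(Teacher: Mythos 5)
Your overall plan --- reduce to a divisibility statement on graded pieces, using that $\lambda^*$ is an isomorphism on $\Omega_0$ and picks up a $p$-factor on the quotient $\cH_0^\sharp/\Omega_0$, then absorb $\underline{\delta}$-denominators via the defining relation of $\dX_r$ --- is the right shape, and is what the paper does (citing \cite[Lemma 6.4]{AI17}). However, there is a genuine error in the heart of the local computation. You assert that $\lambda^*$ ``acts as $p$ times an isomorphism'' on $\cH_0^\sharp/\Omega_0$, and then separately introduce a $\underline{\delta}^{-ph}$-denominator that you attribute to the duality $\cH_0^{\sharp\vee}\hookrightarrow\cH_0^\vee$. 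Neither of these is correct. The actual content of \cite[Lemma 6.4]{AI17} is that $\lambda$ identifies $\cH_0^\sharp/\Omega_0=\underline{\delta}^p\omega_0^\vee$ with $p\,\underline{\delta}^{1-p^2}\,\cH_{\cG_0'}^\sharp/\Omega_0'$: the factor is $p\,\underline{\delta}^{1-p^2}$, not $p$, and the $\underline{\delta}$ is intrinsic to the comparison of the two $\sharp$-lattices (since forming the quotient by $C_1$ raises the Hodge height to ${\rm Hdg}^p$), not an artifact of dualizing the local model of $\V_Q$. As a result, on ${\rm Gr}_m$ the induced morphism is divisible by $p^m\underline{\delta}^{(1-p^2)m}$, not by $p^m\underline{\delta}^{-pm}$; since $(1-p^2)m < -pm$, your version underestimates the $\underline{\delta}$-penalty and the subsequent ``bookkeeping'' you defer would not close as written.

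The paper then handles the absorption by a short squaring argument that you don't have: it shows
\[
\left(p^m\underline{\delta}^{(1-p^2)m}\right)^2 = p^{2m}{\rm Hdg}^{-2(p+1)m}\subset p^m\left(p\,{\rm Hdg}^{-p^3}\right)^m\subset p^m\cO_{\dX_{r+1}},
\]
where the last inclusion uses that $p\,{\rm Hdg}^{-p^{r+2}}$ is a well-defined section of $\cO_{\dX_{r+1}}$ with $r\geq 1$, and the middle one uses $p^3\geq 2(p+1)$. Finally, your closing step --- ``a single application of $U_{\dP_0}$ annihilates $\W^{{\bf k}_{\tau_0,n}}/{\rm Fil}_m$ modulo $p^{[m/2]-1}$'' --- is too strong: divisibility on each graded piece of a filtered module does not give divisibility on the module in one shot, since $U_{\dP_0}(x)$ can have a component of strictly lower filtration degree that is not $p$-divisible. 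What one gets from the graded estimate is nilpotency modulo $p^{[m/2]-1}$, which is exactly what the lemma asserts (and why it is phrased as nilpotency rather than divisibility). Your heuristic explanation of the threshold $m\geq p$ is also not what drives the inequality; the actual constraint used is $r\geq 1$ to make $p\,{\rm Hdg}^{-p^3}$ integral.
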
 
\begin{proof}
Analogously to \cite[Proposition 3.24]{AI17} we will prove that the morphism $\cU$ induces a morphism on the $m$th graded part of the filtration that is zero modulo $p^{[m/p]}$, this automatically implies the claim.

By \cite[Lemma 6.4]{AI17} the map induced on de Rham cohomology $\lambda:\cH_0^{\sharp}\rightarrow\cH_{\cG_0'}^{\sharp}$ (where $\cG_0'$ is the $p$-divisible group associated with ${\bf A}'$) gives an isomorphism $\Omega_0\simeq\Omega_0':=\Omega_{\cG_0'}^1$ and identifies $\cH_0^{\sharp}/\Omega_0=\underline{\delta}^p\omega_0^\vee$ with $p\underline{\delta}^{1-p^2}\cH_{\cG_0'}^{\sharp}/\Omega_0'$. Using the description of ${\rm Gr}_m\W_0^{{\bf k}_{\tau_0,n}^0}$ provided in \eqref{gradpiec} we deduce that the induced morphism on graded parts is zero modulo $p^m\underline{\delta}^{(1-p^2)m}$. By construction $p{\rm Hdg}^{-p^{r+2}}$ is a well defined section of $\dX_{r+1}$, since $r\geq 1$, 
\[
\left(p^m\underline{\delta}^{(1-p^2)m}\right)^2=p^{2m}{\rm Hdg}^{-2(p+1)m}\subset p^m(p{\rm Hdg}^{-p^3})^m\subset p^m\cO_{\dX_{r+1}},
\]
hence the result follows.
\end{proof}

\begin{rmk}\label{rmkonheckeops}
We note that the above definition of $U_{\dP_0}$ fits with the one given in \cite[\S 7.3]{BM} acting on $H^0(\dX,\Omega^{{\bf k}_n})$. Moreover, in \cite[\S 7]{BM} one can also find the definition of the Hecke operators $V_{\dP_0}$ and $U_{\dP}$, for $\dP\neq \dP_0$, acting on $H^0(\dX,\Omega^{{\bf k}_n})$.
\end{rmk}

\subsection{De Rham cohomology with coefficients in $\W_0^{{\bf k}^0_{\tau_0,n}}$ and the overconvergent projection}

The multiplicative law on $(g_n\circ f_0)_\ast\cO_{\V_0(\cH_0^\sharp,s_0)}$ provides a morphism 
\[
{\rm Fil}_2\W^2\times \W^{{\bf k}_{\tau_0,n}}\longrightarrow \W^{{\bf k}_{\tau_0,n}+2}.
\]
Moreover, by the Kodaira-Spenser isomorphism we can identify $\Omega_{\cX_r}^1$ with ${\rm Fil}_2\W^2$ in the adic space $\cX_{r}$. Hence, the connection $\bigtriangledown_{{\bf k}_{\tau_0,n}}$ can be reinterpreted as the morphism of de Rham complexes of sheaves on the adic space $\cX_r\times\cW_n$
\[
\W_{{\bf k}_{\tau_0,n}}^{\bullet}:\W^{{\bf k}_{\tau_0,n}}\stackrel{\bigtriangledown_{{\bf k}_{\tau_0,n}}}{\longrightarrow}\W^{{\bf k}_{\tau_0,n}+2};\qquad {\rm Fil}_m(\W_{{\bf k}_{\tau_0,n}}^{\bullet}):{\rm Fil}_m(\W^{{\bf k}_{\tau_0,n}})\stackrel{\bigtriangledown_{{\bf k}_{\tau_0,n}}}{\longrightarrow}{\rm Fil}_{m+1}(\W^{{\bf k}_{\tau_0,n}+2}).
\]   
Let us denote by $H_{dR}^i(\cX_r\times\cW_n,\bullet)$ the $i$-th hypercohomology group of such de Rham complexes.


The same identification as above allows us to consider the Gauss-Manin operator given by Griffith's transversality 
\[
\bigtriangledown_m:\omega_0^{k-m}\otimes\Sym^m\cH_0\longrightarrow \omega_0^{k-m+1}\otimes\Sym^{m+1}\cH_0.
\]
Notice that the sheaves $\omega_0^{k-m}\otimes\Sym^m\cH_0$ define a filtration ${\rm Fil}_m$ of $\Sym^k\cH_0$. Moreover, on graded pieces the connection $\bigtriangledown_m$ defines an isomorphism times $k-m$. This implies that 
\[
\bigtriangledown_k(\Sym^k\cH_0)\subseteq {\rm Fil}_{k}\Sym^{k+2}\cH_0={\rm Fil}_{k-1}\Sym^{k+2}\cH_0+{\rm Im}\bigtriangledown_{k-1}=\cdots={\rm Fil}_{0}\Sym^{k+2}\cH_0+{\rm Im}\bigtriangledown_{k-1}.
\]
Since ${\rm Gr}_m(\Sym^k\cH_0)\simeq \omega_0^{k-2m}$, we obtain that $\bigtriangledown_k$ defines a morphism
\[
\theta^{k+1}:H^0(\cX_r,\omega_0^{-k})\longrightarrow H^0(\cX_r,\omega_0^{k+2}).
\]
As shown in \cite[\S 8.2]{BM}, the classical computations on $q$-expansions also apply on this setting considering Serre-Tate coordinates.
This implies that the exactly same proof of \cite[Lemma 3.32]{AI17} works showing that $H_{dR}^1(\cX_r\times\cW_n,{\rm Fil}_m(\W_{{\bf k}_{\tau_0,n}}^{\bullet}))$ lies in the exact sequence
\begin{equation}\label{exseqOW}
0\longrightarrow H^0(\cX_r\times\cW_n,\Omega_0^{{\bf k}_{\tau_0,n}+2})\longrightarrow H_{dR}^1(\cX_r\times\cW_n,{\rm Fil}_m(\W_{{\bf k}_{\tau_0,n}}^{\bullet}))\longrightarrow\bigoplus_{i=0,\rho}^mH^0(\cX_r\times\cW_n,j_{\rho,\ast}(\omega_0)^{-i})\longrightarrow 0
\end{equation}
where $j_\rho$ is the closed immersion $\cX_r\hookrightarrow \cX_r\times\cW_n$ defined by a point such that $w_n=i$. Moreover, the torsion free part $H_{dR}^1(\cX_r\times\cW_n,{\rm Fil}_m(\W_{{\bf k}_{\tau_0,n}}^{\bullet}))^{tf}$ lies in the exact sequence
\[
0\longrightarrow H^0(\cX_r\times\cW_n,\Omega_0^{{\bf k}_{\tau_0,n}+2})\longrightarrow H_{dR}^1(\cX_r\times\cW_n,{\rm Fil}_m(\W_{{\bf k}_{\tau_0,n}}^{\bullet}))^{tf}\longrightarrow\bigoplus_{i=0,\rho}^m\theta^{i+1}\left(H^0(\cX_r\times\cW_n,j_{\rho,\ast}(\omega_0)^{-i})\right)\longrightarrow 0
\]
where the morphisms are equivariant for the action of $U_{\dP_0}$ and the action of $U_{\dP_0}$ on $H^0(\cX_r\times\cW_n,j_{\rho,\ast}(\omega_0)^{-i})$ is divided by $p^{i+1}$.

Let $I_m$ be the ideal $\prod_{i=0}^m(w_n-i)$ and $\Lambda_{I_{m}}$ the localization of $\Lambda_{\tau_0,n}$ at $I_m$. The above exact sequence \eqref{exseqOW} provides an isomorphism
\begin{equation}\label{Hn}
H_m^{\dagger}:H_{dR}^1(\cX_r\times\cW_n,{\rm Fil}_m(\W_{{\bf k}_{\tau_0,n}}^{\bullet}))\otimes_{\Lambda_{\tau_0,n}}\Lambda_{I_{m}}\longrightarrow H^0(\cX_r\times\cW_n,\Omega_0^{{\bf k}_{\tau_0,n}+2})\otimes_{\Lambda_{\tau_0,n}}\Lambda_{I_{m}}.
\end{equation}

Since ${\rm Fil}_m\W^{{\bf k}_{n}}$ is coherent and $U_{\dP_0}$ is compact, the usual discussion on slope decompositions applies to $H^0(\cX_r\times\cW_n,{\rm Fil}_m\W^{{\bf k}_{n}})$. Hence, given a finite slope $h\geq 0$ we have, locally on the weight space, a slope $h$ decomposition. This implies that $H_{dR}^i(\cX_r\times\cW_n,{\rm Fil}_m(\W_{{\bf k}_{\tau_0,n}}^{\bullet}))$ admits a slope $h$ decomposition.
Due to Lemma \ref{lemFilm}, the operator $U_{\dP_0}$ on $H_{dR}^i(\cX_r\times\cW_n,\left(\W/{\rm Fil}_m(\W)\right)^{\bullet})$ is divisible by $p^{h+1}$ for large enough $m$, where $\left(\W/{\rm Fil}_m(\W)\right)^{\bullet}$ is the quotient complex of $\W_{{\bf k}_{\tau_0,n}}^\bullet$ and ${\rm Fil}_m(\W_{{\bf k}_{\tau_0,n}}^\bullet)$. By the long exact sequence of de Rham complexes this implies that 
\begin{equation}\label{HdRWFil}
H_{dR}^i(\cX_r\times\cW_n,{\rm Fil}_m(\W_{{\bf k}_{\tau_0,n}})^{\bullet})^{\leq h}\simeq H_{dR}^i(\cX_r\times\cW_n,\W_{{\bf k}_{\tau_0,n}}^{\bullet})^{\leq h}.
\end{equation}
The above isomorphism together with \eqref{exseqOW} provides a morphism 
\begin{equation}
H^{\dagger}:H_{dR}^1(\cX_r\times\cW_n,\W_{{\bf k}_{\tau_0,n}}^{\bullet})^{\leq h}\otimes_{\Lambda_{\tau_0,n}}\Lambda_{I_{m}}\longrightarrow H^0(\cX_r\times\cW_n,\Omega_0^{{\bf k}_{\tau_0,n}+2})^{\leq h}\otimes_{\Lambda_{\tau_0,n}}\Lambda_{I_{m}},
\end{equation}
where $m$ is an natural number depending on $h$.

The spectral theory developed in \cite[Appendice B]{AIP-halo} provides a overconvergent projection $e_{\leq h}$ onto slope $\leq h$ subspace, this gives rise to the composition:
\begin{equation}\label{Hdh}
\xymatrix{
H^{0}(\cX_r\times\cW_n,\W^{{\bf k}_{\tau_0,n}})\otimes_{\Lambda_{\tau_0,n}}\Lambda_{I_{m}}\ar[d]\ar[r]^{H^{\dagger,\leq h}} &H^0(\cX_r\times\cW_n,\Omega_0^{{\bf k}_{\tau_0,n}})^{\leq h}\otimes_{\Lambda_{\tau_0,n}}\Lambda_{I_{m}}\\
H_{dR}^1(\cX_r\times\cW_n,\W_{{\bf k}_{\tau_0,n}-2}^{\bullet})\otimes_{\Lambda_{\tau_0,n}}\Lambda_{I_{m}}\ar[r]^{e_{\leq h}} & H_{dR}^1(\cX_r\times\cW_n,\W_{{\bf k}_{\tau_0,n}-2}^{\bullet})^{\leq h}\otimes_{\Lambda_{\tau_0,n}}\Lambda_{I_{m}}\ar[u]^{H^\dagger}
}
\end{equation}
where the left down arrow is the natural projection.

\section{Triple products and $p$-adic L-functions}

\subsection{Triple products at $\tau_0$}

In \cite[Theorem 4.11]{BM}, it is shown that there exists a morphism
\begin{eqnarray*}
t_{k_1,k_2,k_3}:H^0(\cX_r,\omega_0^{k_1})\times H^0(\cX_r,\omega_0^{k_2})&\longrightarrow& H^0(\cX_r,\omega_0^{k_3});\\
t_{k_1,k_2,k_3}(s_1,s_2)&:=&\sum_{j=0}^m (-1)^j\binom{m}{j}\binom{M-2}{k_1+j-1}\bigtriangledown_{k_1}^j(s_1)\bigtriangledown^{m-j}_{k_2}(s_2),
\end{eqnarray*}
where $k_1+k_2+k_3$ is even, $k_3\geq k_1+k_2$, $M=\frac{k_1+k_2+k_3}{2}$, and $m=\frac{k_3-k_1-k_2}{2}$. The aim of this section is to define a triple product
\[
{\bf t}:H^0(\cX_r\times\cW_{n_1},\Omega_0^{{\bf k}_{\tau_0,n_1}})\times H^0(\cX_r\times\cW_{n_2},\Omega_0^{{\bf k}_{\tau_0,n_2}})\longrightarrow H^0(\cX_b\times\cW_{n_1}\times\cW_{n_2}\times\cW_{n_3},\Omega_0^{{\bf k}^\ast_{\tau_0,n_3}})^{\leq h},
\]
where ${\bf k}^\ast_{\tau_0,n_3}:\Z_p^\times\rightarrow \Lambda_{\tau_0,n_1}\hat\otimes\Lambda_{\tau_0,n_2}\hat\otimes\hat\Lambda_{\tau_0,n_3}$ is given by ${\bf k}^\ast_{\tau_0,n_3}={\bf k}_{\tau_0,n_1}\otimes{\bf k}_{\tau_0,n_2}\otimes{\bf k}^2_{\tau_0,n_3}$, and $b>r$ is an integer depending on $r\geq {\rm max}\{n_1,n_2,n_3\}$. Such triple product ${\bf t}$ when evaluated at $(k_1,k_2,m)\in(\N^3)\cap (\cW_{n_1}\times\cW_{n_2}\times\cW_{n_3})(\Q_p)$ must coincide with $t_{k_1,k_2,k_3}$ up to constant, where $k_3=k_1+k_2+2m$. 

Given $s_1\in H^0(\cX_r\times\cW_{n_1},\Omega_0^{{\bf k}_{\tau_0,n_1}})$, $s_2\in H^0(\cX_r\times\cW_{n_2},\Omega_0^{{\bf k}_{\tau_0,n_2}})$, and $m\in\N$ we can define using the multiplicative structure of $(g_n\circ f_0)_\ast \cO_{\V_0(\cH_0^\sharp,s_0)}$
\[
\bigtriangledown_{{\bf k}_{\tau_0,n_1}}^{m}(s_1)\cdot s_2\in H^0(\cX_r\times\cW_{n_1,n_2},{\rm Fil}_{m}\W^{{\bf k}_{\tau_0,n_1}+{\bf k}_{\tau_0,n_2}+2m}),\qquad \cW_{n_1,n_2}:=\cW_{n_1}\times\cW_{n_2}.
\]
Write ${\bf k}_{n_1,n_2,m}:={\bf k}_{\tau_0,n_1}+{\bf k}_{\tau_0,n_2}+2m$, $\Lambda_{n_1,n_2}=\Lambda_{\tau_0,n_1}\hat\otimes\Lambda_{\tau_0,n_2}$ and
\[
\Theta_m(s_1,s_2)\in H^0(\cX_r\times\cW_{n_1,n_2},\Omega_0^{{\bf k}_{n_1,n_2,m}})\otimes_{\Lambda_{n_1,n_2}}\Lambda_{I_{m}}
\]
for the image of $\bigtriangledown_{{\bf k}_{\tau_0,n_1}}^{m}(s_1)\cdot s_2$ under the composition of the natural projection
\[
H^0(\cX_r\times\cW_{n_1,n_2},{\rm Fil}_{m}\W^{{\bf k}_{n_1,n_2,m}})\longrightarrow H_{dR}^1(\cX_r\times\cW_{n_1,n_2},{\rm Fil}_{m-1}(\W_{{\bf k}_{n_1,n_2,m}-2}^{\bullet}))\otimes_{\Lambda_{n_1,n_2}}\Lambda_{I_{m}}
\]
and the morphism $H_{m-1}^\dagger$ of \eqref{Hn}.


\begin{lemma}\label{LemmaTripleClass}
if $\rho=(k_1,k_2)\in \N^2\cap\Lambda_{n_1,n_2}(\Q)$ we have that
\[
\rho^\ast(\Theta_m(s_1,s_2))=(-1)^m\binom{k_3-2}{m+k_2-1}^{-1}t_{k_1,k_2,k_3}(\rho^\ast(s_1),\rho^\ast(s_2)),
\]
where $k_3=k_1+k_2+2m$.
\end{lemma}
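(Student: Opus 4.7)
The plan is to interpret $\rho^{*}(\Theta_m(s_1,s_2))$ as the classical holomorphic projection of the nearly holomorphic form $\bigtriangledown_{k_1}^m(\rho^{*}s_1)\cdot\rho^{*}s_2$, and then use the Leibniz rule for $\bigtriangledown$ together with Vandermonde's identity to recognize the Rankin--Cohen-style expression $t_{k_1,k_2,k_3}$ as the correct scalar multiple.

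First, by functoriality of base change and the canonical identification $k^{*}\bigl({\rm Fil}_k(\W_0^{{\bf k}_{\tau_0,n}^0})\otimes\Omega^{{\bf k}_{f}}\bigr)[1/p]\simeq\mathrm{Sym}^{k}(\cH_0)[1/p]$ stated just before Theorem \ref{GMthm}, the specialization $\rho^{*}\bigl(\bigtriangledown_{{\bf k}_{\tau_0,n_1}}^m(s_1)\cdot s_2\bigr)$ equals the classical nearly holomorphic section $\bigtriangledown_{k_1}^m(\rho^{*}s_1)\cdot\rho^{*}s_2\in{\rm Fil}_m\,\mathrm{Sym}^{k_3}(\cH_0)[1/p]$. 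Provided $k_3-2\notin\{0,1,\dots,m\}$ (automatic from $k_3\ge k_1+k_2\ge 2$ together with $m\ge 0$), the localization $\Lambda_{I_m}$ is regular at $\rho$, so by \eqref{exseqOW} the specialized map $\rho^{*}(H^{\dagger}_{m-1})$ coincides with the classical holomorphic projection
\[
{\rm Fil}_m\,\mathrm{Sym}^{k_3}(\cH_0)[1/p]\longrightarrow H^{1}_{dR}(\cX_r,{\rm Fil}_{m-1}\mathrm{Sym}^{\bullet}\cH_0)\longrightarrow H^{0}(\cX_r,\omega_0^{k_3}).
\]

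Second, I would run a Leibniz computation in the de Rham complex. The identity
\[
\bigtriangledown\bigl(\bigtriangledown^{j-1}(\rho^{*}s_1)\cdot\bigtriangledown^{m-j}(\rho^{*}s_2)\bigr)=\bigtriangledown^{j}(\rho^{*}s_1)\cdot\bigtriangledown^{m-j}(\rho^{*}s_2)+\bigtriangledown^{j-1}(\rho^{*}s_1)\cdot\bigtriangledown^{m-j+1}(\rho^{*}s_2)
\]
gives, by descending induction on $j$,
\[
\bigtriangledown^{j}(\rho^{*}s_1)\cdot\bigtriangledown^{m-j}(\rho^{*}s_2)\equiv(-1)^{m-j}\,\bigtriangledown^{m}(\rho^{*}s_1)\cdot\rho^{*}s_2\pmod{\mathrm{Im}(\bigtriangledown)}
\]
for $0\le j\le m$. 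Substituting into the definition of $t_{k_1,k_2,k_3}$ collapses the sum to
\[
t_{k_1,k_2,k_3}(\rho^{*}s_1,\rho^{*}s_2)\equiv(-1)^{m}\left[\sum_{j=0}^{m}\binom{m}{j}\binom{M-2}{k_1+j-1}\right]\bigtriangledown^{m}(\rho^{*}s_1)\cdot\rho^{*}s_2\pmod{\mathrm{Im}(\bigtriangledown)}.
\]
Via Vandermonde's identity, after the substitution $j\mapsto m-j$, the bracketed sum equals $\binom{m+M-2}{k_1+m-1}=\binom{k_3-2}{k_1+m-1}=\binom{k_3-2}{m+k_2-1}$, using $(k_1+m-1)+(k_2+m-1)=k_3-2$ for the last equality.

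Finally, since $t_{k_1,k_2,k_3}(\rho^{*}s_1,\rho^{*}s_2)\in H^{0}(\cX_r,\omega_0^{k_3})$ is already holomorphic by \cite[Theorem 4.11]{BM}, it equals its own image under the holomorphic projection. Combined with the first step this gives
\[
(-1)^{m}\binom{k_3-2}{m+k_2-1}\,\rho^{*}(\Theta_m(s_1,s_2))=t_{k_1,k_2,k_3}(\rho^{*}s_1,\rho^{*}s_2),
\]
which rearranges to the claimed formula. The only real delicacy I expect is verifying that \eqref{exseqOW} specializes at $\rho$ precisely to the classical Hodge decomposition of $\mathrm{Sym}^{k_3}(\cH_0)[1/p]$, so that $\rho^{*}(H^{\dagger}_{m-1})$ is genuinely the classical holomorphic projection; this is the analogue of \cite[Lemma 3.32]{AI17} and follows from the compatibility of filtrations built into Theorem \ref{GMthm} together with \eqref{HdRWFil}.
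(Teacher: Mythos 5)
Your proof is correct and is essentially the same argument as the paper's: both show that $\bigtriangledown_{k_1}^{m}(\rho^{*}s_1)\cdot\rho^{*}s_2$ and $(-1)^m\binom{k_3-2}{m+k_2-1}^{-1}t_{k_1,k_2,k_3}(\rho^{*}s_1,\rho^{*}s_2)$ differ by a $\bigtriangledown$-exact term, and then conclude by noting that $H^{\dagger}_{m-1}$ only depends on the de Rham cohomology class and restricts to the identity on the holomorphic subsheaf $\Omega_0^{{\bf k}_{\tau_0,n}+2}$ via the exact sequence \eqref{exseqOW}.

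The one stylistic difference: the paper records the exact primitive explicitly, writing down coefficients $a_i=(-1)^{i+m+1}\binom{k_3-2}{m+k_2-1}^{-1}\sum_{j=0}^i\binom{m}{j}\binom{M-2}{k_1+j-1}$ and asserting (without derivation) that the telescoping works; you instead iterate the Leibniz rule to collapse every $\bigtriangledown^j(s_1)\cdot\bigtriangledown^{m-j}(s_2)$ to $(-1)^{m-j}\bigtriangledown^m(s_1)\cdot s_2$ modulo $\mathrm{Im}(\bigtriangledown)$ and then invoke Vandermonde. Your route has the small advantage of actually \emph{deriving} the normalizing constant $\binom{k_3-2}{m+k_2-1}$ rather than asserting it, and of making clear why the paper's $a_i$ take the form they do (they are exactly the partial Vandermonde sums needed for the telescoping). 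If you want to make the argument entirely watertight, add one sentence checking that at a classical unbalanced weight $\rho$ the specialization $\rho^{*}w_n$ avoids $\{0,1,\dots,m-1\}$ — this is what guarantees that the localization at $I_{m-1}$ is harmless at $\rho$ and that $\rho^{*}H^{\dagger}_{m-1}$ is the genuine holomorphic projection; the paper also glosses over this point.
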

\begin{proof}
A laborious but straightforward computation shows that
\begin{eqnarray*}
&&\rho^\ast\left(\bigtriangledown_{{\bf k}_{\tau_0,n_1}}^{m}(s_1)\cdot s_2\right)=\bigtriangledown_{k_1}^{m}(\rho^\ast(s_1))\cdot \rho^\ast(s_2)=\\
&&=(-1)^{m}\binom{k_{3}-2}{m+k_{2}-1}^{-1} t_{k_1,k_2,k_3}(\rho^\ast(s_1),\rho^\ast(s_2))+\bigtriangledown\left(\sum_{i=0}^{m-1}a_i\bigtriangledown_{k_1}^{i}(\rho^\ast(s_1))\bigtriangledown^{m-1-i}_{k_2}(\rho^\ast(s_2))\right),
\end{eqnarray*}
where 
\[
a_i=(-1)^{i+m+1}\binom{k_{3}-2}{m+k_{2}-1}^{-1}\left(\sum_{j=0}^i\binom{m}{j}\binom{M-2}{k_{1}+j-1}\right).
\]
This implies that
\begin{eqnarray*}
&&(-1)^{m}\binom{k_{3}-2}{m+k_{2}-1}^{-1} t_{k_1,k_2,k_3}(\rho^\ast(s_1),\rho^\ast(s_2))=\\
&&\rho^\ast\left(\bigtriangledown_{{\bf k}_{\tau_0,n_1}}^{m}(s_1)\cdot s_2-\bigtriangledown_{{\bf k}_{n_1,n_2,m}}\left(\sum_{i=0}^{m-1}a_i\bigtriangledown_{{\bf k}_{\tau_0,n_1}}^{i}(s_1)\bigtriangledown^{m-1-i}_{{\bf k}_{\tau_0,n_2}}(s_2)\right)\right).
\end{eqnarray*}
Hence the result follows from the fact that the class of $\bigtriangledown_{{\bf k}_{\tau_0,n_1}}^{m}(s_1)\cdot s_2$ in the de Rham cohomology group coincides with the class of $\bigtriangledown_{{\bf k}_{\tau_0,n_1}}^{m}(s_1)\cdot s_2-\bigtriangledown_{{\bf k}_{n_1,n_2,m}}\left(\sum_{i=0}^{m-1}a_i\bigtriangledown_{{\bf k}_{\tau_0,n_1}}^{i}(s_1)\bigtriangledown^{m-1-i}_{{\bf k}_{\tau_0,n_2}}(s_2)\right)$.
\end{proof}

\subsection{$p$-adic interpolation of the operator $\Theta_m$}

We aim to construct the operator $\Theta_{{\bf k}_{\tau_0,n_3}}$ for the universal character ${\bf k}_{\tau_0,n_3}:\Z_p^\times\longrightarrow\Lambda_{n_3}$. A strategy one can think of is to $p$-adically iterate the operators $\Theta_m$ as $m$ varies. Following this strategy, the problem that one immediately finds is that as $m$ grows the ideal $I_m$ grows as well. Since we have to invert $I_m$ in order to define $H_m^\dagger$, we lose control of the denominators.

One can think of the strategy of defining $\Theta_{{\bf k}_{\tau_0,n_3}}$ in the space of $p$-adic modular forms, namely, sections on the ordinary locus. This is the strategy used in \cite{BM}. By equation \eqref{HdRWFil}, we have to restrict ourselves to the case of slope $\leq h$ in order to control the filtration where $\Theta_{{\bf k}_{\tau_0,n_3}}$ lives. Using the interpolation of the Serre operator introduced in \cite{BM}, one could define $\Theta_{{\bf k}_{\tau_0,n_3}}$ provided by the existence of a slope $\leq h$ projector $e_{\leq h}$ on the space of $p$-adic modular forms. But there is no such projector acting on the space of $p$-adic modular forms except for the ordinary case.

The strategy followed by Andreatta and Iovita in \cite{AI17} relies on p-adically interpolate the operator $\bigtriangledown_{{\bf k}_{\tau_0,n}}^m$ acting on $\W':=\bigoplus_k\W^{{\bf k}_{\tau_0,n}+2k}\subset (g_n\circ f_0)_\ast\cO_{\V_0(\cH_0^\sharp,s_0)}\otimes g_{1,\ast}\cO_{\mathfrak{IG}_1}\hat\otimes\Lambda_n$. Since on the space of nearly overconvergent modular forms we have the projector $e_{\leq h}$, we have the operator $H^{\dagger,\leq h}$ of the diagram \eqref{Hdh}, hence we can proceed with a well defined construction of $\Theta_{{\bf k}_{\tau_0,n_3}}$  on the space of nearly overconvergent modular forms.
In order to ensure the convergence of the series involved in the definition of $\bigtriangledown_{{\bf k}_{\tau_0,n}}^{\bf s}$, for a universal character ${\bf s}$, one has to control de valuation of the corresponding terms, and this can be done by computing the Gauss-Manin connection on the ordinary locus. There we can use Serre-Tate coordinates and the corresponding $q$-expansions.


\subsection{Twists by finite characters}
This section is analogous to \cite[\S 3.8]{AI17}. As always $r\geq n$ and let $\cG_0$ be the universal formal group over $\mathcal{IG}_{2n,r+n}$. Let $C_n\subset \cG_0[p^n]$ be the canonical subgroup and let us consider 
\[
\pi:\cG_0\longrightarrow \cG_0':=\cG_0/C_n\qquad C_n':=C_{2n}/C_n.
\]
It is clear that $C_n'$ is the canonical subgroup of $\cG_0'$ and the dual isogeny $\lambda:\cG_0'\rightarrow\cG_0$ maps $C_n'$ to $C_n$. This construction defines a morphism $t:\mathcal{IG}_{2n,r+n}\rightarrow\mathcal{IG}_{n,r}$, since a trivialization of $D_{2n}$ induces by multiplication by $p^n$ a trivialization of $D_n'=(C_n')^\vee$. 

If $C_n'':=\ker{\lambda}$, we have that $\cG_0[p^n]=C_n'\times C_n''$ as group schemes over $\mathcal{IG}_{2n,r+n}$. The universal trivialization provides isomorphisms
\[
s:\Z/p^n\Z\longrightarrow C_n'',\qquad s^\vee:C_n'\longrightarrow \mu_{p^n},
\]
since the Weil pairing and the polarization identifies $C_n'\simeq (C_n'')^\vee$. 
 
A choice of a primitive $p^n$-root of unity $\xi$ provides an identification $\Z/p^n\Z\simeq{\rm Hom}(\Z/p^n\Z,\mu_{p^n})$, $j\mapsto(1\mapsto\xi^j)$. Thus, we obtain a bijection
\[
{\rm Hom}(C_n'',C_n')\longrightarrow{\rm Hom}(\Z/p^n\Z,\mu_{p^n})\simeq \Z/p^n\Z\,\qquad g\longmapsto s^\vee\circ g\circ s.
\]
Write $\rho_j\in {\rm Hom}(C_n'',C_n')$ for the morphism corresponding to $j\in \Z/p^n\Z$, and let us consider 
$t_j:\mathcal{IG}_{2n,r+n}\rightarrow\mathcal{IG}_{n,r}$ for the morphism induced by making quotient by $(\rho_j\times{\rm Id})(C_n'')\subset\cG_0'[p^n]$, with the induced trivialization of $(C_{n,j})^\vee$, where $C_{n,j}$ is the image of $C_n'$ in $\cG_{0}^j:=\cG_0/ (\rho_j\times{\rm Id})(C_n'')$ which coincides with the canonical subgroup. Write also $t_j:\mathfrak{IG}_{2n,r+n}\rightarrow\mathfrak{IG}_{n,r}$ for the corresponding morphism taking normalizations. Over $\mathfrak{IG}_{2n,r+n}$ we have isogenies on the universal formal groups
\[
\cG_0\stackrel{\lambda}{\longleftarrow} \cG_0'\stackrel{\lambda_j}{\longrightarrow} \cG_0^j
\]
respecting canonical groups and trivializations. By functoriality, this gives rise to the corresponding maps
\[
\cH_0^\sharp\stackrel{\lambda^\sharp}{\longleftarrow} (\cH_0')^\sharp\stackrel{\lambda_j^\sharp}{\longrightarrow} (\cH_0^j)^\sharp.
\]
By \cite[Lemma 6.4]{AI17} the images coincide ${\rm Im}\lambda^\sharp={\rm Im}\lambda_j^\sharp$, hence it gives rise to a isomorphism $f_j: (\cH_0^j)^\sharp\rightarrow  \cH_0^\sharp$ preserving Hodge exact sequences and Gauss-Manin connections. Again by functoriality, this gives rise to a morphism over $\mathfrak{IG}_{2n,r+n}$
\[
f_j^\ast: t_j^{\ast}(f_0)_\ast\cO_{\V_Q(\cH_0^\sharp,s_0)}\longrightarrow (f_0)_\ast\cO_{\V_Q(\cH_0^\sharp,s_0)}
\] 
preserving filfrations and Gauss-Manin connections.
Thus, after extending scalars in order to have the root $\xi$, we obtain a morphism for any finite character $\chi:(\Z/p^n\Z)^\times\rightarrow \bar\Q_p^\times$
\[
\theta_\chi=g_{\chi^{-1}}^{-1}\sum_j\chi(j)^{-1}f_j^\ast\circ t_j^\ast:H^0\left(\dX_r,(g_n\circ f_0)_\ast\cO_{\V_Q(\cH_0^\sharp,s_0)}\right)\longrightarrow H^0\left(\dX_{r+n},(g_{2n}\circ f_0)_\ast\cO_{\V_Q(\cH_0^\sharp,s_0)}\right),
\]
where $g_{\chi^{-1}}:=\sum_n\chi(n)^{-1}\xi^n$ is a Gauss sum.

The following result is completely analogous to \cite[Lemma 3.31]{AI17}:
\begin{lemma}\label{fintwist}
The morphism $\theta_\chi$ provides a morphism 
\[
\bigtriangledown_{{\bf k}_{\tau_0,n}}^{\chi}:H^0\left(\dX_r\times\dW_n,\W^{{\bf k}_{\tau_0,n}}\right)\longrightarrow H^0\left(\dX_{r+n}\times\dW_{n},\W^{{\bf k}_{\tau_0,n}+2\chi}\right).
\]
\end{lemma}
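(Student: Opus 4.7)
My plan is to follow the proof of \cite[Lemma 3.31]{AI17} adapted to our tower of Igusa formal schemes $\mathfrak{IG}_{r,n}$. There are essentially two things to verify: (a) the operator $\theta_\chi$ is well-defined on the nearly overconvergent sheaves, i.e.\ it preserves the structures used to cut out $\W^{{\bf k}_{\tau_0,n}}$ from the ambient sheaf; and (b) when restricted to the ${\bf k}_{\tau_0,n}^0$-eigensubsheaf, it lands in the $({\bf k}_{\tau_0,n}^0\cdot\chi^2)$-eigensubsheaf, i.e.\ in $\W^{{\bf k}_{\tau_0,n}+2\chi}$.

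For (a), the construction preceding the lemma already shows that $t_j:\mathfrak{IG}_{2n,r+n}\to\mathfrak{IG}_{n,r}$ respects canonical subgroups and their trivializations, while $f_j^\ast$ identifies $(\cH_0^\sharp,s_0,Q)$ on the two sides compatibly with the Hodge filtration, the marked section $s_0$, the canonical direct summand $Q$ (which is functorial under isogenies respecting Hodge filtrations), and the Gauss-Manin connection (by \cite[Lemma 6.4]{AI17}). Forming the Gauss-sum-weighted average and then tensoring with $\Lambda_n$ and $\Omega^{{\bf k}_f}$ preserves these compatibilities, so $\theta_\chi$ defines a morphism on the $\Lambda_n$-linear families of sections of $(g_n\circ f_0)_\ast\cO_{\V_Q(\cH_0^\sharp,s_0)}$ between the appropriate levels $\dX_r$ and $\dX_{r+n}$.

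For (b), the key computation is to track the action of the group $\Z_p^\times(1+\cI_n\G_a)$ whose ${\bf k}_{\tau_0,n}^0$-eigensubsheaf is $\W^{{\bf k}_{\tau_0,n}}$ by definition. The Galois action on the tower $\mathfrak{IG}_{2n,r+n}/\mathfrak{IG}_{n,r}$ rescales the pair of trivializations $(s,s^\vee)$, and the indexing map $g\mapsto s^\vee\circ g\circ s$ is bilinear in this pair. Consequently, when $a\in\Z_p^\times$ acts on the tower, $\rho_j$ is reindexed to $\rho_{a^2 j}$, so the change of variable $j\mapsto a^{-2}j$ in the sum $\sum_j\chi(j)^{-1}f_j^\ast\circ t_j^\ast$ produces a factor $\chi(a)^2$. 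Combining this with the ${\bf k}_{\tau_0,n}^0(a)$-equivariance of the inputs yields exactly the $({\bf k}_{\tau_0,n}^0\cdot\chi^2)$-equivariance of the outputs, and the Gauss-sum normalization $g_{\chi^{-1}}^{-1}$ is present to produce the correct constants at classical specializations.

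The main obstacle is precisely this quadratic dependence of $\rho_j$ on the trivialization pair, which is what explains the factor $2$ in $2\chi$ and mirrors the fact that each integer-level iteration of $\bigtriangledown_{{\bf k}_{\tau_0,n}}$ raises the weight by $2$ rather than $1$. A careful dualization via the Weil pairing plus the polarization (used to identify $C_n'\simeq(C_n'')^\vee$) is required to see that both $s$ and $s^\vee$ transform in parallel under the Galois action, giving the $a^2$ rather than merely an $a$. Once this identity is pinned down, the rest of the argument is a routine translation of \cite[Lemma 3.31]{AI17}.
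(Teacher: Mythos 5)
Your proposal is correct and follows the same route as the paper, which simply defers the argument to \cite[Lemma 3.31]{AI17}. In particular, your identification of the quadratic rescaling $j\mapsto a^2 j$ under the Galois action on the Igusa tower, stemming from the fact that both $s$ and its Weil-pairing dual $s^\vee$ are scaled by $a$ so that the indexing map $g\mapsto s^\vee\circ g\circ s$ picks up an $a^2$, is exactly the mechanism producing the twist by $2\chi$ in the cited argument.
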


\subsection{$q$-expansions on unitary Shimura curves}

Over the ordinary locus $\dX_{\rm ord}$ it is constructed in \cite{BM} an affine $\Spf(W)$-formal scheme $\dX(\infty)$, where $W$ is the ring of Witt vectors of $\bar F_p$. $\dX(\infty)$ represents ordinary abelian varieties   
$(A,\iota,\theta,\alpha^{\dP_0})$ together with frames: 
\begin{equation}\label{bigframe}
0 \longrightarrow\G_m[p^\infty] \stackrel{\imath}{\longrightarrow} A[p^\infty]^{-,1} \stackrel{\pi}{\longrightarrow}\Q_p/\Z_p\times\prod_{\dP\neq\dP_0}(F_{\dP}/\cO_\dP)^2\longrightarrow 0.
\end{equation}
Moreover, $\dX(\infty)$ is equipped with a classifying morphism $c:\dX(\infty)/W\rightarrow\G_m/W$. The local coordinates given by the morphism $c$ are called \emph{Serre-Tate coordinates}.

As explained in \cite[\S 8.2]{BM}, given a point $P\in \dX(\infty)$ providing a frame
\begin{equation}\label{smallframe} 
0 \longrightarrow\G_m[p^\infty] \stackrel{\imath}{\longrightarrow} \cG_0[p^\infty] \stackrel{\pi}{\longrightarrow}\Q_p/\Z_p\longrightarrow 0
\end{equation}
by restriction of \eqref{bigframe}, its image is $c(P)=q\in\G_m$ such that
\[
\cG_0\simeq E_q:=(\G_{m,R}[p^{\infty}]\oplus\Q_p)\slash\langle((1+q)^m,-m),\;m\in\Z_p\rangle,\qquad 0\longrightarrow\G_{m,R}[p^\infty]\stackrel{\imath}{\longrightarrow} E_q\stackrel{\pi_q}{\longrightarrow}\Q_p/\Z_p\longrightarrow 0,
\]
where $\imath (a)=(a,0)$ and $\pi_q(a,b)=b$. Moreover, we have that $C_n=\{(\xi^n,0),\;n\in\Z/p^n\Z\}$. This implies that
\[
\cG_0'=E_{(q+1)^{p^n}-1}:=(\G_{m,R}[p^{\infty}]\oplus\Q_p)\slash\langle((1+q)^{p^nm},-m),\;m\in\Z_p\rangle,
\]
and the morphism $\lambda:\cG_0'\rightarrow\cG_0$ is given by the morphism
\[
\lambda:E_{(q+1)^{p^n}-1}\longrightarrow E_q;\qquad \lambda(a,x)=(a,p^n x),\qquad C_n''=\ker(\lambda)=\left\{\left((1+q)^m,-\frac{m}{p^n}\right),\;m\in\Z_p\right\}.
\]
For any $j\in(\Z/p^n\Z)^\times$, the morphism $\rho_j:C_n''\rightarrow C_n'=\{(\xi^m,0),m\in\Z/p^n\Z\}$ is given by
\[
\rho_j:C_n''\longrightarrow C_n', \; \left((1+q)^m,-\frac{m}{p^n}\right)\longmapsto (\xi^{mj},0),\qquad (\rho_j\times{\rm Id})C_n''=\left\{\left(\xi^{mj}(1+q)^m,-\frac{m}{p^n}\right),\;m\in\Z_p\right\}.
\]
This implies that (see \cite[\S 8.2]{BM}) 
\[
\cG_0^j= E_{\xi^j(q+1)-1}=(\G_{m,R}[p^{\infty}]\oplus\Q_p)\slash\langle(\xi^{mj}(1+q)^{m},-m),\;m\in\Z_p\rangle.
\]
Thus, for any $f(q)\in W[[q]]=\cO_{\G_m}$, we have $t_j^\ast f(q)=f(\xi^j(q+1)-1)$.

Over $\dX(\infty)$, the sheaf $\cH_{0}$ has a cannonical basis $\omega,\eta$ such that $\omega$ is a basis of $\omega_0$. In Serre-Tate coordinates the Gauss-Manin connection acts as follows (see \cite[proof Theorem 9.9]{BM}): Given $f_1(q),f_2(q)\in W[[q]]=\cO_{\G_m}$,
\begin{equation}\label{actGM}
\bigtriangledown(f_1(q)\omega+f_2(q)\eta)=\partial f_1\omega^3+\left(f_1+\partial f_2\right)\eta\omega^2,\qquad\partial f_i:=(q+1)\frac{df_i}{dq}.
\end{equation}
By the local description of $\W_0^{{\bf k}_{\tau_0,n}}$, for an small open subset $U\subset \dX(\infty)$, the space $\W_0^{{\bf k}_{\tau_0,n}}(U)$ is generated by elements of the form
\[
V_{{\bf k}_{\tau_0,n},m}:=P_n(Z)\frac{T^m}{(1+p^nZ)^m}\in{\rm Fil}_m\W_0^{{\bf k}_{\tau_0,n}},\qquad P_n(Z)=\sum_{i\geq 0}\binom{w_n}{i}p^{ni}Z^i,
\] 
where $(1+p^nZ)$ corresponds to $\omega$ and $p^nT$ corresponds to $\eta$. By functoriality, the morphism $f_j^\ast\circ t_j^\ast$ leaves $Z$ and $T$ invariant. Hence, for any finite character $\chi$ of conductor at most $p^n$ and any $f(q)=\sum_na_n(q+1)^n$, we obtain 
\begin{eqnarray*}
\theta_\chi(f(q)V_{{\bf k}_{\tau_0,n},m})&=&g_{\chi^{-1}}^{-1}\sum_j\chi(j)^{-1}f_j^\ast\circ t_j^\ast(f(q)V_{{\bf k}_{\tau_0,n},m})=g_{\chi^{-1}}^{-1}\sum_j\chi(j)^{-1}f(\xi^j(q+1)-1)V_{{\bf k}_{\tau_0,n},m}\\
&=&g_{\chi^{-1}}^{-1}\sum_j\chi(j)^{-1}\left(\sum_na_n\xi^{jn}(1+q)^n\right)V_{{\bf k}_{\tau_0,n},m}=f_\chi(q)V_{{\bf k}_{\tau_0,n},m},
\end{eqnarray*}
where $f_{\chi}(q):=\sum_n\chi(n)a_n(q+1)^n$ (here $\chi(n)=0$ if $\gcd(n,p)\neq 1$).
\begin{rmk}
Notice that the image of $\theta_\chi$ lies in the kernel of $U_{\dP_0}$ by \cite[\S 8.3]{BM}.
\end{rmk}

\subsection{Iterations of the Gauss-Manin operator}

For any character $(k:\Z_p^\times\longrightarrow R)\in\dW_{\tau_0,n}(R)$, let $\W_0^{k}\subset(g_n\circ f_0)_\ast\cO_{\V_Q(\cH_0^\sharp,s_0)}\hat\otimes R$ be the specialization of $\W_0^{{\bf k}_{\tau_0,n}}$.

Let $\Lambda_{\tau_0,n}'$ be isomorphic to $\Lambda_{\tau_0,n}$.
We aim to define $\bigtriangledown_{{\bf k}_{\tau_0,n}}^{{\bf s}_{\tau_0,n}}: \W_0^{{\bf k}_{\tau_0,n}}\rightarrow \W_0^{{\bf k}_{\tau_0,n}+2{\bf s}_{\tau_0,n}}$, where ${\bf s}_{\tau_0,n}:\Z_p^\times\rightarrow\Lambda_{\tau_0,n}'$ is the universal character seen in $\dW_{\tau_0,2n}(\Lambda_{\tau_0,n}\hat\otimes\Lambda_{\tau_0,n}')$.
Recall that by \eqref{univK}
\[
{\bf s}_{\tau_0,n}(\beta)=\sum_{\gamma}{\bf s}(\gamma)\cdot 1_{\gamma+p^{n}\Z_p}(\beta)\cdot\sum_{i\geq 0}\binom{w_n'}{i}\left(\frac{\beta-\gamma}{\gamma}\right)^i,\quad \gamma=w(s)\exp(jp),\; 0\leq j\leq p^{n-1}-1, \;s\in(\Z/p\Z)^\times
\]
where  $w_n'=p^{-1}\log(1+T)\in p^{1-n}\Lambda_{\tau_0,n}'$, $w$ is the Teichm\"uller character and ${\bf s}(w(s)\exp(jp))=(1+T)^j(s)\in\Lambda_{\tau_0}=\Z_p[(\Z/p\Z)^\times][[T]]$.

This implies that $\bigtriangledown_{{\bf k}_{\tau_0,n}}^{{\bf s}_{\tau_0,n}}$ should be a limit of the form
\begin{equation}\label{GMs}
\bigtriangledown_{{\bf k}_{\tau_0,n}}^{{\bf s}_{\tau_0,n}}=\sum_{\gamma}{\bf s}(\gamma)\cdot\sum_{i\geq 0}\binom{w_n'}{i}\left(\frac{\bigtriangledown_{{\bf k}_{\tau_0,n}}-\gamma {\rm Id}}{\gamma}\right)^i\circ \bigtriangledown_{{\bf k}_{\tau_0,n}}^{\gamma+p^{n}\Z_p}.
\end{equation}
First, using the definition of the operator $\bigtriangledown_{{\bf k}_{\tau_0,n}}^{\chi}$ of Lemma \ref{fintwist} we define
\[
\bigtriangledown_{{\bf k}_{\tau_0,n}}^{\gamma+p^{n}\Z_p}:=\frac{1}{\varphi(p^n)}\sum_{\chi}\chi(\gamma)^{-1}\bigtriangledown_{{\bf k}_{\tau_0,n}}^{\chi},
\]
where the sum is taken over all characters of conductor $p^n$ and $\varphi(p^n)=(p-1)p^{n-1}$.
Notice that, locally over the ordinary locus and in Serre-Tate coordinates
\begin{eqnarray*}
\bigtriangledown_{{\bf k}_{\tau_0,n}}^{\gamma+p^{n}\Z_p}(a_n(1+q)^nV_{{\bf k}_{\tau_0,n},m})&=&a_n(1+q)^nV_{{\bf k}_{\tau_0,n},m}\frac{1}{\varphi(p^n)}\sum_{\chi}\chi(\gamma)^{-1}\chi(n)\\
&=&\left\{\begin{array}{lc}a_n(q+1)^nV_{{\bf k}_{\tau_0,n},m},&n\equiv\gamma \;({\rm mod}\;p^n)\\0,&n\not\equiv\gamma \;({\rm mod}\;p^n).\end{array}\right.
\end{eqnarray*}

Once $\bigtriangledown_{{\bf k}_{\tau_0,n}}^{\gamma+p^{n}\Z_p}$ is defined, the existence of $\bigtriangledown_{{\bf k}_{\tau_0,n}}^{{\bf s}_{\tau_0,n}}$ will follow from the convergence of the series given in \eqref{GMs}.

Assuming as always that $r\geq n$, after applying $\bigtriangledown_{{\bf k}_{\tau_0,n}}^{\gamma+p^{n}\Z_p}$ to a section in $\dX_{r}\times\dW_n$, we obtain a section in $\dX_{r+n}\times\dW_n$. Over $\dX_{r+n}\times\dW_n$ we will denote by $\W_0^{{\bf k}_{\tau_0,n}+2{\bf s}_{\tau_0,n}}$ the specialization of $\W_0^{{\bf k}_{\tau_0,2n}}$ at ${\bf k}_{\tau_0,n}+2{\bf s}_{\tau_0,n}$. Since any locally analytic function with radius of analycity $p^n$ has also radius of analicity $p^{2n}$, we can always see a section in $\W_0^{{\bf k}_{\tau_0,n}}$ as an specialization of a section in $\W_0^{{\bf k}_{\tau_0,2n}}$.

To ensure the convergence of the series \eqref{GMs}, we will prove in Proposition \ref{pdiviter} that 
\[
\sum_{\gamma}{\bf s}(\gamma)\cdot\binom{w_n'}{i}\left(\frac{\bigtriangledown_{{\bf k}_{\tau_0,n}}-\gamma {\rm Id}}{\gamma}\right)^i\circ \bigtriangledown_{{\bf k}_{\tau_0,n}}^{\gamma+p^{n}\Z_p}(s)\in H^0(\dX_{\rm ord}\times\dW_{n}\times\dW_{\tau_0,n}',p^{i\frac{p}{p-1}}(g_{2n}\circ f_0)_\ast\cO_{\V_Q(\cH^\sharp,s_0)}),
\]
for any $s\in H^0(\dX_r\times\dW_n,\W_0^{{\bf k}_{\tau_0,n}})$ with $\dW_{\tau_0,n}':={\rm Spf}(\Lambda_{\tau_0,n}')$. By Theorem \ref{GMthm} we have
\[
A_i:={\rm Hdg}^{ic_n}\sum_{\gamma}{\bf s}(\gamma)\cdot\binom{w_n'}{i}\left(\frac{\bigtriangledown_{{\bf k}_{\tau_0,n}}-\gamma {\rm Id}}{\gamma}\right)^i\circ \bigtriangledown_{{\bf k}_{\tau_0,n}}^{\gamma+p^{n}\Z_p}\in H^0(\dX_{r+n}\times\dW_{n}\times\dW_{\tau_0,n}',(g_{2n}\circ f_0)_\ast\cO_{\V_Q(\cH^\sharp,s_0)}).
\]
Hence applying \cite[Proposition 4.10]{AI17}, we deduce that there exists $b$ depending on $r$ and $n$ such that 
\[
A_i(s)\in p^{[i/2]} H^0(\dX_{b}\times\dW_{n}\times\dW_{\tau_0,n}',(g_{2n}\circ f_0)_\ast\cO_{\V_Q(\cH^\sharp,s_0)}),
\]
for any $s\in H^0(\dX_r\times\dW_n,\W_0^{{\bf k}_{\tau_0,n}})$. This provides the existence of the operator
\[
\bigtriangledown_{{\bf k}_{\tau_0,n}}^{{\bf s}_{\tau_0,n}}:=\sum_{i\geq 0}A_i:H^0(\dX_r\times\dW_n,\W_0^{{\bf k}_{\tau_0,n}})\longrightarrow H^0(\dX_b\times\dW_n\times\dW_{\tau_0,n}',\W_0^{{\bf k}_{\tau_0,n}+2{\bf s}_{\tau_0,n}})^{U_{\dP_0}=0}.
\]
Moreover, by the action of $U_{\dP_0}$ and $V_{\dP_0}$ on $q$-expansions described in \cite[\S 8.3]{BM}, for any integer $m\in\dW_{\tau_0,n}'(\C_p)\cap \N$ and any section $s\in H^0(\dX_r\times\dW_n,\W_0^{{\bf k}_{\tau_0,n}})$ we have 
\begin{equation}\label{GMesp}
m^\ast\bigtriangledown_{{\bf k}_{\tau_0,n}}^{{\bf s}_{\tau_0,n}}(s)=\bigtriangledown_{{\bf k}_{\tau_0,n}}^{m}(s^{[p]})=\left(\bigtriangledown_{{\bf k}_{\tau_0,n}}^{m}\circ(1-V_{\dP_0}\circ U_{\dP_0})\right)(s).
\end{equation}

\begin{rmk}
The existence of $\bigtriangledown_{{\bf k}_{\tau_0,n}}^{{\bf s}_{\tau_0,n}}$ generalize the results of Andreatta-Iovita in \cite{AI17} even for $F=\Q$. Indeed, they were only able to construct $\bigtriangledown_{{\bf k}_{n}}^{s}$ under certain analycity conditions for $s$ (\cite[Assumption 5.4]{AI17}). The innovative tools that have allow us to construct $\bigtriangledown_{{\bf k}_{\tau_0,n}}^{{\bf s}_{\tau_0,n}}$ for the universal character ${\bf s}_{\tau_0,n}$ and were not available in \cite{AI17} are the locally analytic description of ${\bf s}_{\tau_0,n}$, provided by Equation \eqref{univK}, and the construction of $\W_0^{{\bf k}_{\tau_0,n}}$ by means of the new formal vector bundles $\V_Q(\mathcal{E},s_1,\cdots, s_m)$ described in \S \ref{ss:formal vector bundles}, that ensure the convergence of the power series $A_i$ as we will see in the next section. 
\end{rmk}


\subsection{Computations in Serre-Tate coordinates}

For any $a\in\N$, the local description of $\W_0^{{\bf k}_{\tau_0,n}+a}$ over $\dX(\infty)$ tells us that any section of $\W_0^{{\bf k}_{\tau_0,n}+a}(U)$, for some open $U\subset \dX(\infty)$, is generated by elements of the form 
\[
V_{{\bf k}_{\tau_0,n}+a,m}:=P_{2n}(Z)(1+p^{2n}Z)^{a}\frac{T^m}{(1+p^{2n}Z)^m}.
\] 
Applying the relations of \eqref{actGM}, we obtain that
\[
\bigtriangledown_{{\bf k}_{\tau_0,n}+a}(V_{{\bf k}_{\tau_0,n}+a,m})=p^{2n}(w_n+r-m)\cdot V_{{\bf k}_{\tau_0,n}+a+2,m+1}, 
\]
and this implies that (see \cite[Lemma 3.38]{AI17})
\begin{equation}\label{GMsclas}
\bigtriangledown_{{\bf k}_{\tau_0,n}}^s(f\cdot V_{{\bf k}_{\tau_0,n},m})=\sum_{i=0}^sp^{2ni}\binom{s}{i}\binom{w_n+s-m-1}{i}i!\cdot \partial^{s-i} f\cdot V_{{\bf k}_{\tau_0,n}+2s,m+i},\qquad s\in\N.
\end{equation}

Assume that $f(q)=a_n(1+q)^n$, where $p\nmid n$, and assume that $n\equiv\gamma_0$ modulo $p^n$. Since in this case $\partial f=nf$, we compute that
\begin{eqnarray*}
&&\sum_{\gamma}\binom{w_n'}{i}\left(\frac{\bigtriangledown_{{\bf k}_{\tau_0,n}}-\gamma {\rm Id}}{\gamma}\right)^i\circ \bigtriangledown_{{\bf k}_{\tau_0,n}}^{\gamma+p^{n}\Z_p}(f\cdot V_{{\bf k}_{\tau_0,n},m})=\binom{w_n'}{i}\left(\frac{\bigtriangledown_{{\bf k}_{\tau_0,n}}-\gamma_0 {\rm Id}}{\gamma_0}\right)^i(f\cdot V_{{\bf k}_{\tau_0,n},m})\\
&=&\binom{w_n'}{i}\sum_{j=0}^i(-1)^{i-j}\binom{i}{j}\gamma_0^{-j}\bigtriangledown_{{\bf k}_{\tau_0,n}}^j(f\cdot V_{{\bf k}_{\tau_0,n},m})\\
&=&\binom{w_n'}{i}\sum_{j=0}^i\binom{i}{j}(-1)^{i-j}\gamma_0^{-j}\sum_{k=0}^jp^{2nk}\binom{j}{k}\binom{w_n+j-m-1}{k}k!\cdot \partial^{j-k} f\cdot V_{{\bf k}_{\tau_0,n}+2j,m+k}\\
&=&\binom{w_n'}{i}fV_{{\bf k}_{\tau_0,n},m}\sum_{j=0}^i\sum_{k=0}^{j}\binom{i}{j}\binom{j}{k}\binom{M+j}{k}k!(-1)^{i-j} A^{k}B^j,
\end{eqnarray*}
where $M:=w_n-m-1$ $A:=\left(\frac{p^{2n}T}{n(1+p^{2n}Z)}\right)$ and $B:=\left(\frac{n(1+p^{2n}Z)^{2}}{\gamma_0}\right)$

\begin{lemma}
We have that 
\begin{eqnarray*}
\sum_{j=0}^i\sum_{k=0}^{j}\binom{i}{j}\binom{j}{k}\binom{M+j}{k}k!(-1)^{i-j} A^{k}B^j=
\\=(-1)^i\sum_{k=0}^i\binom{i}{k}(-AB)^kk!\sum_{j=0}^k\binom{M+k}{k-j}\binom{i-k}{j}(1-B)^{i-k-j}(-B)^j.
\end{eqnarray*}
\end{lemma}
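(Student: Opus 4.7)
The plan is to verify the identity by straightforward binomial manipulations plus one application of Vandermonde's convolution. No deep input is needed; it is purely a combinatorial rearrangement.

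First I would reorganize the left-hand side by swapping the order of summation and factoring out the powers of $A$. Using $(-1)^{i-j} = (-1)^i(-1)^j$ and the identity $\binom{i}{j}\binom{j}{k} = \binom{i}{k}\binom{i-k}{j-k}$, the left-hand side becomes
\[
(-1)^i\sum_{k=0}^i A^k k!\,\binom{i}{k}\sum_{j=k}^i \binom{i-k}{j-k}\binom{M+j}{k}(-1)^j B^j.
\]
After the change of variable $l=j-k$ in the inner sum, this simplifies to
\[
(-1)^i\sum_{k=0}^i (-AB)^k k!\,\binom{i}{k}\sum_{l=0}^{i-k}\binom{i-k}{l}\binom{M+k+l}{k}(-B)^l.
\]

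Next I would apply Vandermonde's convolution to $\binom{M+k+l}{k}$ in the form
\[
\binom{M+k+l}{k}=\sum_{j=0}^{k}\binom{M+k}{k-j}\binom{l}{j},
\]
substitute into the previous display, and interchange the sums over $l$ and $j$. Using the standard absorption identity $\binom{i-k}{l}\binom{l}{j}=\binom{i-k}{j}\binom{i-k-j}{l-j}$, the inner $l$-sum becomes
\[
\binom{i-k}{j}(-B)^{j}\sum_{l=j}^{i-k}\binom{i-k-j}{l-j}(-B)^{l-j} = \binom{i-k}{j}(-B)^{j}(1-B)^{i-k-j}
\]
by the binomial theorem.

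Collecting everything yields exactly
\[
(-1)^i\sum_{k=0}^{i}(-AB)^k k!\,\binom{i}{k}\sum_{j=0}^{k}\binom{M+k}{k-j}\binom{i-k}{j}(-B)^{j}(1-B)^{i-k-j},
\]
which is the right-hand side of the lemma. The only step requiring any thought is identifying the correct form of Vandermonde to apply (splitting $M+k+l$ rather than $M+j$); once that is pinned down, the rest is mechanical. I would expect no obstacle, since each manipulation is routine and the ranges of summation match up automatically (the terms $\binom{M+k}{k-j}$ with $j>k$ would vanish or be absent, and likewise for $\binom{i-k}{j}$ with $j>i-k$).
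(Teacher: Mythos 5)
Your proof is correct and is essentially the paper's proof run in reverse: the paper starts from the right-hand side, expands $(1-B)^{i-k-j}$ by the binomial theorem, applies the absorption identity $\binom{i-k}{j}\binom{i-k-j}{s-j}=\binom{i-k}{s}\binom{s}{j}$ and Vandermonde to produce $\binom{M+k+s}{k}$, then reindexes and invokes $\binom{i}{k}\binom{i-k}{j-k}=\binom{i}{j}\binom{j}{k}$ to land on the left-hand side. You perform exactly the same binomial manipulations in the opposite direction, so the two arguments are equivalent.
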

\begin{proof}
We compute that
\begin{eqnarray*}
\sum_{j=0}^k\binom{M+k}{k-j}\binom{i-k}{j}(1-B)^{i-k-j}(-B)^j&=&\sum_{j=0}^k\sum_{r=0}^{i-k-j}\binom{M+k}{k-j}\binom{i-k}{j}\binom{i-k-j}{r}(-B)^{r+j}=\\
=\sum_{j=0}^{i}\sum_{s=j}^{i-k}\binom{M+k}{k-j}\binom{i-k}{j}\binom{i-k-j}{s-j}(-B)^{s}&=&\sum_{s=0}^{i-k}\binom{i-k}{s}(-B)^{s}\sum_{j=0}^{k}\binom{M+k}{k-j}\binom{s}{j}=\\
&=&\sum_{s=0}^{i-k}\binom{i-k}{s}\binom{M+k+s}{k}(-B)^{s}.
\end{eqnarray*}
Thus, we obtain
\begin{eqnarray*}
&&(-1)^i\sum_{k=0}^i\binom{i}{k}(-AB)^kk!\sum_{j=0}^k\binom{M+k}{k-j}\binom{i-k}{j}(1-B)^{i-k-j}(-B)^j=\\
&=&(-1)^i\sum_{k=0}^i\sum_{s=0}^{i-k}\binom{i}{k}\binom{i-k}{s}\binom{M+k+s}{k}k!A^k(-B)^{k+s}=\\
&=&(-1)^i\sum_{k=0}^i\sum_{j=k}^{i}\binom{i}{k}\binom{i-k}{j-k}\binom{M+j}{k}k!A^k(-B)^{j},
\end{eqnarray*}
hence the result follows from a change of variable and an easy equality with binomials.
\end{proof}
Since $p^{2n}\mid A$, $p^n\mid (1-B)$ and $k!\binom{M+k}{m}\in p^{-nm}\Lambda_{\tau_0,n}$ (see Remark \ref{valbinom}), we deduce that
\begin{eqnarray*}
&&p^{n(i-2k)}\mid k!\binom{M+k}{k-j}\binom{i-k}{j}(1-B)^{i-k-j}(-B)^j,\\
&&p^{ni}\mid  (-1)^i\sum_{k=0}^i\binom{i}{k}(-AB)^kk!\sum_{j=0}^k\binom{M+k}{k-j}\binom{i-k}{j}(1-B)^{i-k-j}(-B)^j.
\end{eqnarray*}
By Remark \ref{valbinom} $\binom{w_n'}{i}\in p^{-i\left(n-\frac{p}{p-1}\right)}\Lambda_{\tau_0,n}'$, hence the above lemma provides the following result:
\begin{prop}\label{pdiviter}
For any $s\in H^0(\dX_{\rm ord},\W_0^{{\bf k}_{\tau_0,n}})$,
\[
\sum_{\gamma}\binom{w_n'}{i}\left(\frac{\bigtriangledown_{{\bf k}_{\tau_0,n}}-\gamma {\rm Id}}{\gamma}\right)^i\circ \bigtriangledown_{{\bf k}_{\tau_0,n}}^{\gamma+p^{n}\Z_p}(s)\in p^{i\frac{p}{p-1}}H^0(\dX_{\rm ord}\times\dW_{n}\times\dW_{\tau_0,n}',(g_{2n}\circ f_0)_\ast\cO_{\V_Q(\cH^\sharp,s_0)}).
\]
\end{prop}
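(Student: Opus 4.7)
The plan is to verify the claim after reducing to a local computation on the ordinary locus, then assemble the $p$-adic estimates already collected in the preceding paragraphs.

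First I would exploit the fact that sections over $\dX_{\rm ord}$ pull back to $\dX(\infty)$, where the local description of $\W_0^{{\bf k}_{\tau_0,n}}$ provided before the proposition shows that $\W_0^{{\bf k}_{\tau_0,n}}(U)$ is spanned, over the Serre--Tate coordinate ring, by elements of the form $V_{{\bf k}_{\tau_0,n},m}$. Thus, by additivity, it is enough to prove the divisibility when the input is of the shape $f\cdot V_{{\bf k}_{\tau_0,n},m}$ with $f(q)=a_n(1+q)^n$. For $p\mid n$ the projector $\bigtriangledown_{{\bf k}_{\tau_0,n}}^{\gamma+p^{n}\Z_p}$ kills $f\cdot V_{{\bf k}_{\tau_0,n},m}$ by the $q$-expansion formula for $\bigtriangledown_{{\bf k}_{\tau_0,n}}^{\gamma+p^{n}\Z_p}$ computed above, so the nontrivial case is $p\nmid n$.

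Next I would plug the classical $q$-expansion formula \eqref{GMsclas} for $\bigtriangledown_{{\bf k}_{\tau_0,n}}^j$ into the binomial expansion of $\left(\frac{\bigtriangledown_{{\bf k}_{\tau_0,n}}-\gamma_0\mathrm{Id}}{\gamma_0}\right)^i$, using $\partial f=nf$ and the fact that only the unique $\gamma_0\equiv n\pmod{p^n}$ contributes. This rewrites the $i$-th term, stripped of $\binom{w_n'}{i}$, as $f\cdot V_{{\bf k}_{\tau_0,n},m}$ times the double sum handled in the preceding Lemma; that Lemma identifies it with the closed form in the variables $A=\frac{p^{2n}T}{n(1+p^{2n}Z)}$ and $B=\frac{n(1+p^{2n}Z)^{2}}{\gamma_0}$.

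The central estimate is then visible: $p^{2n}\mid A$, $p^n\mid (1-B)$, and $k!\binom{M+k}{k-j}\in p^{-n(k-j)}\Lambda_{\tau_0,n}$ by Remark \ref{valbinom}. Tallying the powers of $p$ contributed by $A^{k}$ (namely $2nk$), by $(1-B)^{i-k-j}(-B)^{j}$ (at least $n(i-k-j)$), and by the binomial term (at least $-n(k-j)$) gives a total of at least $ni$, uniformly in the summation. This proves divisibility of the double sum by $p^{ni}$.

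Finally I would combine this with the bound $\binom{w_n'}{i}\in p^{-i(n-\frac{p}{p-1})}\Lambda_{\tau_0,n}'$ of Remark \ref{valbinom} to get an overall divisibility by $p^{ni}\cdot p^{-i(n-\frac{p}{p-1})}=p^{i\frac{p}{p-1}}$, which is exactly the asserted estimate. The only step where care is needed is checking that the combinatorial identity in the Lemma does hold integrally (no hidden denominators beyond the ones already tracked by $k!\binom{M+k}{k-j}$); this is the main obstacle, but it has been handled in the preceding Lemma by a direct manipulation of binomial sums, so the final proof reduces to citing that identity and reading off the valuations.
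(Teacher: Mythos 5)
Your argument reproduces the paper's own proof: reduce to Serre--Tate $q$-expansions on $\dX(\infty)$, use the preceding Lemma's closed form in the variables $A$ and $B$, and tally the valuations $v(A^k)\geq 2nk$, $v((1-B)^{i-k-j}(-B)^j)\geq n(i-k-j)$, $v(k!\binom{M+k}{k-j})\geq -n(k-j)$ to get a net $p^{ni}$, which combined with $\binom{w_n'}{i}\in p^{-i(n-\frac{p}{p-1})}\Lambda_{\tau_0,n}'$ yields exactly the stated $p^{i\frac{p}{p-1}}$. This is the same route as the paper, so no further comment is needed.
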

\begin{rmk}
This computation of the $p$-adic valuation shows the necessity of working with radius of analicity $p^{2n}$. One could predict this fact by observing \eqref{GMsclas} and guessing that over $\dX_{\rm ord}$
\[
\bigtriangledown_{{\bf k}_{\tau_0,n}}^{{\bf s}_{\tau_0,n}}(f\cdot V_{{\bf k}_{\tau_0,n},m})=\sum_{i\geq 0}p^{2ni}\binom{w_n'}{i}\binom{w_n+w_n'-m-1}{i}i!\cdot \partial^{{\bf s}_{\tau_0,n}-i} f\cdot V_{{\bf k}_{\tau_0,n}+2{\bf s}_{\tau_0,n},m+i},
\]
where $\partial^{{\bf s}_{\tau_0,n}-i}a_n(1+q)^n={\bf s}_{\tau_0,n}(n)a_n(1+q)^n$, if $p\nmid n$, and $\partial^{{\bf s}_{\tau_0,n}-i}a_n(1+q)^n=0$, otherwise.
\end{rmk}

\subsection{Triple products for any weight}

The existence of $\bigtriangledown_{{\bf k}_{\tau_0,n}}^{{\bf s}_{\tau_0,n}}$ together with the morphism
\[
H^{\dagger,\leq h}:H^0(\cX_r\times\cW_n,\W^{{\bf k}_{\tau_0,n}})\longrightarrow H^0(\cX_r\times\cW_n,\Omega_0^{{\bf k}_{\tau_0,n}})^{\leq h}\otimes_{\Lambda_{\tau_0,n}}\Lambda_{I_m},
\]
of \eqref{Hdh}
provides the triple product
\begin{eqnarray*}
\Theta_{{\bf k}_{\tau_0,n_3}}:H^0(\cX_{r,n_1},\Omega_0^{{\bf k}_{\tau_0,n_1}})\times H^0(\cX_{r,n_2},\Omega_0^{{\bf k}_{\tau_0,n_2}})&\longrightarrow& H^0(\cX_{b,n_1,n_2,n_3},\Omega_0^{{\bf k}_{\tau_0,n_3}^\ast})^{\leq h}\otimes_{\Lambda_{n_1,n_2,n_3}}\Lambda_{I_m}\\
{\bf t}_{\tau_0}(s_1,s_2)&:=&H^{\dagger,\leq h}\left(\bigtriangledown_{{\bf k}_{\tau_0,n_1}}^{{\bf k}_{\tau_0,n_3}}s_1\cdot s_2\right),
\end{eqnarray*}
where ${\bf k}_{\tau_0,n_3}^\ast:={\bf k}_{\tau_0,n_1}+{\bf k}_{\tau_0,n_2}+2{\bf k}_{\tau_0,n_3}$, $\cX_{r,n_i}:=\cX_r\times\cW_{n_i}$, $\cX_{b,n_1,n_2,n_3}:=\cX_b\times\cW_{n_1}\times\cW_{n_2}\times\cW_{n_3}$, $\Lambda_{n_1,n_2,n_3}:=\Lambda_{\tau_0,n_1}\otimes\Lambda_{\tau_0,n_2}\otimes\Lambda_{\tau_0,n_3}$, and the superindex $\leq h$ relies on the slope  of the operator $U_{\dP_0}$.
By Equations \eqref{GMesp} and \eqref{Hdh} we have that, for any $m\in \cW_{\tau_0,n_3}(\C_p)\cap\N$,
\begin{equation}\label{espwtGM}
m^\ast \Theta_{{\bf k}_{\tau_0,n_3}}(s_1,s_2)=e_{\leq h}\Theta_m(s_1^{[p]},s_2).
\end{equation}
Recall that sections on the sheaf $\Omega^{{\bf k}_n}$ can be described using Katz interpretation (\S \ref{ss:overconvergent modular forms a la katz}) as rules that assign to any tuple $(A,\imath,\theta,\alpha^{\dP_0},w)$ over $R$ a locally analytic distribution $\mu(A,\imath,\theta,\alpha^{\dP_0},w)\in D_n^{{\bf k}_n^{\tau_0}}(\cO^{\tau_0},R)$. Recall also that $w=(w_0,w^{\tau_0})$ is a basis for $\Omega=\Omega_0\oplus\bigoplus_{\dP\neq\dP_0,\tau}\omega_{\dP,\tau}$.

We denote by $\Lambda_F^1$ the $(d+1)$-dimesional weight space $\Lambda_{F}^1:=\Z_p[[\cO^\times\times\Z_p^\times]]$. Similarly, we define 
\[
\Lambda_n^1:=\Lambda_F^1\left\langle\frac{T_1^{p^n}}{p},\cdots,\frac{T_d^{p^n}}{p},\frac{T^{p^n}}{p}\right\rangle,\qquad \dW^1:={\rm Spf}(\Lambda_F^1),\qquad \dW_n^1:={\rm Spf}(\Lambda_n^1).
\]
The morphism $\cO^\times\rightarrow \cO^\times\times\Z_p^\times$ given by $t\mapsto (t^{-2},{\rm N}(t))$, where ${\rm N}(t)$ is the norm of $t$, provides a morphism of weight spaces $k:\dW^1\rightarrow\dW$. By \cite[Lemma 7.1]{BM} it gives rise to a morphism $k:\dW_n^1\rightarrow \dW_n$.

We fix integers $r\geq n_3\geq n_1,n_2\geq 1$. For $i= 1, 2, 3$ we denote by $({\bf r}_{n_i},{\bf \nu}_{n_i}):\cO^\times\times\Z_p^\times\rightarrow\Lambda_{n_i}^{1}$ the universal characters of $\dW^1_{n_i}$. Then we put ${\bf k}_{n_3}:= k({\bf r}_{n_3},\nu_{n_1}+ \nu_{n_2})$ and ${\bf k}_{n_i}:= k({\bf r}_{n_i},{\bf \nu}_{n_i})$ for $i= 1, 2$. 

 We put $\cR:=\Lambda_{n_1}^1\hat\otimes\Lambda_{n_2}^1\hat\otimes_{{\bf \nu}_{n_3}=\nu_{n_1}+{\bf \nu}_{n_2}} \Lambda_{n_3}^1\simeq \Lambda_{n_1}^1\hat\otimes\Lambda_{n_2}^1\hat\otimes \Lambda_{n_3}$ and consider the characters:
\begin{eqnarray*}
{\bf m}_1^{\tau_0}:={\bf r}_{n_1}^{\tau_0}-{\bf r}_{n_3}^{\tau_0}-{\bf r}^{\tau_0}_{n_2}+{\bf \nu}_{n_2}\circ N:\cO^{\tau_0\times}&\longrightarrow&\cR,\\
{\bf m}_2^{\tau_0}:={\bf r}^{\tau_0}_{n_2}-{\bf r}^{\tau_0}_{n_1}-{\bf r}^{\tau_0}_{n_3}+{\bf \nu}_{n_1}\circ N:\cO^{\tau_0\times}&\longrightarrow&\cR,\\
{\bf m}_3^{\tau_0}:={\bf r}^{\tau_0}_{n_3}-{\bf r}^{\tau_0}_{n_1}-{\bf r}^{\tau_0}_{n_2}:\cO^{\tau_0\times}&\longrightarrow&\cR,\\
{\bf m}_{3,\tau_0}:={\bf r}_{n_1,\tau_0}+{\bf r}_{n_2,\tau_0}-{\bf r}_{n_3,\tau_0}:\Z_p^\times&\longrightarrow&\cR,
\end{eqnarray*}
where $N:\cO^{\tau_0\times}\rightarrow\Z_p^\times$ denotes the norm map.
In the same way as in \cite[\S 10.2]{BM} 
we denote $\underline{\Delta}^{\tau_0}\in C_{n_1}^{{\bf k}_{n_1}^{\tau_0}}(\cO^{\tau_0},\cR)\otimes_{\cR} C_{n_2}^{{\bf k}_{n_2}^{\tau_0}}(\cO^{\tau_0},\cR)\otimes_{\cR} \bar C_{n_3}^{{\bf k}_{n_3}^{\tau_0}}(\cO^{\tau_0},\cR)$
the function defined by:
\[
\underline{\Delta}^{\tau_0}((x_1,y_1),(x_2,y_2),(x_3,y_3)):={\bf m}_1^{\tau_0}(x_3y_2-x_2y_3) \cdot {\bf m}_2^{\tau_0}(x_3y_1-x_1y_3)\cdot {\bf m}_3^{\tau_0}(x_1y_2-x_2y_1),
\] 
where $\bar C_{n}^{k^{\tau_0}}(\cO^{\tau_0},\cdot)$ denote the $k^{\tau_0}$-homogeneous locally analytic functions on $p\cO^{\tau_0}\times\cO^{\tau_0\times}$, and the function is
extended by 0 where ${\bf m}_3^{\tau_0}$ is not defined.

Let $\mu_i\in H^0(\cX_{r,n_i}^1,\Omega^{{\bf k}_{n_i}})$, where $i=1,2$ and $\cX_{r,n_i}^1:=\cX_{r}\times\cW_{n_i}^1$, be a pair of global sections. The choice of a basis $w^{\tau_0}$ of $\bigoplus_{\dP\neq\dP_0,\tau}\omega_{\dP,\tau}$ in some affine open $U={\rm Spf}(R)$, together with the function $\underline{\Delta}^{\tau_0}$ and the map $\Theta_{{\bf m}_{3, \tau_0}}$ above provide an element $t(\mu_{1},\mu_{2})\in {\rm Hom}_{R}(\bar D_{n_3}^{{\bf k}_{n_3}^{\tau_0}}(\cO^{\tau_0},R),\Omega_0^{{\bf k}_{\tau_0,n_3}}(U))$ as follows:
\[
t(\mu_{1},\mu_{2})(\mu):=\int_{\cO^{\tau_0\times}\times\cO^{\tau_0}}\int_{\cO^{\tau_0\times}\times\cO^{\tau_0}}\int_{p\cO^{\tau_0}\times\cO^{\tau_0\times}}\underline{\Delta}^{\tau_0}(v_1,v_2,v_3)\;d\Theta_{{\bf m}_{3, \tau_0}}(\mu_{1},\mu_{2})(v_1,v_2)d\mu(v_3).
\]
By \cite[\S 10.1]{BM}, the space ${\rm Hom}_{R}(\bar D_{n_3}^{{\bf k}_{n_3}^{\tau_0}}(\cO^{\tau_0},R),\Omega_0^{{\bf k}_{\tau_0,n_3}}(U))$ is endowed with the action of the Hecke operators $U_{\dP}$, for $\dP\neq \dP_0$. These are compact operators, hence we can define the usual finite slope projectors $e_{\leq\underline{h}}$, where $\underline{h}\in\N^{\#\{\dP\neq\dP_0\}}$, onto the spaces ${\rm Hom}_{R}(\bar D_{n_3}^{{\bf k}_{n_3}^{\tau_0}}(\cO^{\tau_0},R),\Omega_0^{{\bf k}_{\tau_0,n_3}}(U))^{\leq\underline{h}}$. The same argument showed in \cite[Lemma 10.2]{BM} implies that we have an isomorphism of $R[U_\dP]_{\dP}$-modules:
\[
{\rm Hom}_{R}(\bar D_{n_3}^{{\bf k}_{n_3}^{\tau_0}}(\cO^{\tau_0},R),\Omega_0^{{\bf k}_{\tau_0,n_3}}(U))^{\leq\underline{h}}\simeq D_{n_3}^{{\bf k}_{n_3}^{\tau_0}}(\cO^{\tau_0},\Omega_0^{{\bf k}_{\tau_0,n_3}}(U))^{\leq\underline{h}}.
\] 
Thus, for any choice of horizontal basis $w^{\tau_0}$ we obtain $e_{\leq\underline{h}}t(\mu_{1},\mu_{2})\in D_{n_3}^{{\bf k}_{n_3}^{\tau_0}}(\cO^{\tau_0},\Omega_0^{{\bf k}_{\tau_0,n_3}}(U))^{\leq\underline{h}}$, satisfying all the properties (B1), (B2), (B3) of \S \ref{locallanaldist}. This implies that we obtain a global section 
\[
t(\mu_1,\mu_2)\in H^0(\cX_{b,n_1,n_2,n_3}^1,\Omega^{{\bf k}_{n_3}})^{\leq \underline{h}},\qquad \cX_{b,n_1,n_2,n_3}^1:=\cX_{b}\times\cW_{n_1}^1\times\cW_{n_2}^1\times\cW_{n_3},
\]
where $\underline{h}\in\N^{\#\{\dP\mid p\}}$ now stands for the finite slopes of the different $U_\dP$.
This construction defines a triple product
\begin{equation}\label{gentripprod}
t:H^0(\cX_{r,n_1}^1,\Omega^{{\bf k}_{n_1}})\times H^0(\cX_{r,n_2}^1,\Omega^{{\bf k}_{n_2}})\longrightarrow H^0(\cX_{b,n_1,n_2,n_3}^1,\Omega^{{\bf k}_{n_3}})^{\leq \underline{h}}\otimes\Lambda_{I_m}.
\end{equation}

\subsection{Triple product $p$-adic L-functions: Finite slope case} 

As in the previous section, let us denote by $({\bf r}_{n},{\bf \nu}_{n}):\cO^\times\times\Z_p^\times\rightarrow\Lambda_{n}^{1}$ the universal character of $\dW^1_{n}$, and we put ${\bf k}_{n}:= k({\bf r}_{n},{\bf \nu}_{n})$. Similarly,  we denote by $({\bf r},{\bf \nu}):\cO^\times\times\Z_p^\times\rightarrow\Lambda_{F}^{1}$ the universal character of $\dW^1_{F}$, and ${\bf k}:= k({\bf r},{\bf \nu})$. In \cite[\S 7.2]{BM} it is described an action of the finite group $\Delta:=(\cO_F)^\times_+/\{u\in(\cO_F)_+^\times,\;u\equiv 1\;{\rm mod}\;\mathfrak{n}\}$ on the space $H^0(\dX_r\times \dW_n^1,\Omega^{{\bf k}_{n}})$: for any $\mu\in H^0(\dX_r\times \dW_n^1,\Omega^{{\bf k}_{n}})$ and $s\in (\cO_F)^\times_+$,
\[
([s]\ast\mu)(A,\imath,\theta,\alpha^{\dP_0},w):={\bf r}_n(s\tau_0(s)^{-2})\cdot \nu_n(\tau_0 s)\cdot \mu(A,\imath,s^{-1}\theta,\gamma_s^{-1}\alpha^{\dP_0},w),
\]
where $\gamma_s\in K_1(\mathfrak{n},p)$ and $\det\gamma_s=s$. Let us consider the space 
\[
M^{r}_{{\bf k}_{n}}(\Gamma_1(\cN,p),\Lambda_{n}^1):=\bigoplus_{\mathfrak{c}\in{\rm Pic}(\cO_F)}H^0(\dX_r^{\mathfrak{c}}\times \dW_n^1,\Omega^{{\bf k}_{n}})^\Delta,
\]
where $\dX_r^{\mathfrak{c}}$ is the overconvergent neighborhood of the unitary Shimura curve associated with $\mathfrak{c}$ defined as in \cite[Remark 4.3]{BM}. Then the $\Lambda_F^1$-modules 
\[
M^{\dagger}_{{\bf k}}(\Gamma_1(\cN,p),\Lambda_F^1):=\varprojlim_{r,n} M^{r}_{{\bf k}_{n}}(\Gamma_1(\cN,p),\Lambda_{n}^1),\qquad M^{\dagger}_{{\bf k}}(\Gamma_1(\cN,p),\Lambda_F^1)^{\leq\underline{h}}:=\varprojlim_{r,n} M^{r}_{{\bf k}_{n}}(\Gamma_1(\cN,p),\Lambda_{n}^1)^{\leq\underline{h}},
\] 
are the spaces of \emph{overconvergent families of automorphic forms}.
By \cite[Proposition 7.8]{BM} the specialization of such families at sufficiently big classical points provide classical automorphic forms.

Let $\mu_3\in M^{\dagger}_{{\bf k}}(\Gamma_1(\cN,p),\Lambda_F^1)^{\leq\underline{h}}$ be a eigenvector for the Hecke operators not dividing $\cN$ and such that $U_\dP\mu=\alpha_3^\dP\mu$. 
Let us also consider $\mu_1,\mu_2\in M^{\dagger}_{{\bf k}}(\Gamma_1(\cN,p),\Lambda_F^1)$.

Let $\cR'=\Lambda_F^1\hat\otimes\Lambda_F^1\hat\otimes\Lambda_F'$, where $\Lambda_F'$ is the fraction field of $\Lambda_F$, thus $\cR'$ can be viewed as rational functions on $\dW^1\times\dW^1\times\dW$ with poles at finitely many weights in $\dW$. Write ${\bf k}_1$, ${\bf k}_2$ and ${\bf k}_3$ for the universal characters corresponding each copy of $\dW^1$ and $\dW$.

In the rest we use the following notation: If  $(x, y, z) \in\dW^1\times\dW^1\times\dW$ we denote by $\mu_x$, $\mu_y$ and $\mu_z$ the specializations of the families $\mu_1$ at $x$, $\mu_2$ at $y$, and $\mu_3$ at $z$ respectively.  If $z\in \dW$ is a classical weight  then the specialization $\mu_{z}$ is a classical automorphic form of weight $k_z$, where $k_z$ is the specialization of ${\bf k}_3$ at $z$. 
Recall the triple product of \eqref{gentripprod} defines a triple product
\[
t:M^{\dagger}_{{\bf k}_1}(\Gamma_1(\cN,p),\Lambda_F^1)\times M^{\dagger}_{{\bf k}_2}(\Gamma_1(\cN,p),\Lambda_F^1)\longrightarrow M^{\dagger}_{{\bf k}_3}(\Gamma_1(\cN,p),\cR')^{\leq\underline{h}}.
\]
The following result is analogous to \cite[Lemma 2.19]{DR14} and \cite[Lemma 10.3]{BM}.
\begin{lemma}\label{l:construction}
There exists $\mathcal{L}_p(\mu_1,\mu_2,\mu_3)\in\cR'$ such that for each classical point $(x, y, z) \in\dW^1\times\dW^1\times\dW$,  we have:
\[
\mathcal{L}_p(\mu_1,\mu_2,\mu_3)(x, y, z)=\frac{\left\langle\mu_z^*,t(\mu_{1},\mu_{2})_{(x,y,z)}\right\rangle}{\left\langle\mu_z^*,\mu_z\right\rangle}
\] 
where $\langle\cdot,\cdot\rangle$ is the Petersson inner product and $\mu_z^*$ defines the dual basis of $\mu_z$.
\end{lemma}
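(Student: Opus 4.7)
The plan is to extract the $\mu_3$-isotypic component of $t(\mu_1,\mu_2)\in M^\dagger_{{\bf k}_3}(\Gamma_1(\cN,p),\cR')^{\leq\underline{h}}$ via a Hecke-algebra projector, and to identify the resulting coefficient in $\cR'$ with the prescribed Petersson ratio at classical points. This follows the strategy of \cite[Lemma 10.3]{BM} and \cite[Lemma 2.19]{DR14}, now applied to the new finite-slope triple product \eqref{gentripprod}.

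First I would work inside the slope $\leq\underline{h}$ piece $M^\dagger_{{\bf k}_3}(\Gamma_1(\cN,p),\Lambda_F^1)^{\leq\underline{h}}$, on which $U_{\dP_0}$ is compact by the spectral theory of \cite[Appendice B]{AIP-halo} and commutes with the prime-to-$\cN p$ Hecke operators. Since $\mu_3$ is a genuine $U_\dP$-eigenvector with eigenvalue $\alpha_3^\dP$ for each $\dP\mid p$ lying in this piece, Riesz theory produces, after inverting finitely many elements of $\Lambda_F$, an idempotent $e_{\mu_3}$ in the Hecke algebra over $\Lambda_F^1\otimes\Lambda_F'$ that projects onto the $\mu_3$-eigenline; concretely $e_{\mu_3}$ is built as a polynomial in the $U_\dP$'s from the characteristic polynomial of $U_{\dP_0}$ on that finite-slope piece. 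The fact that the denominators introduced come only from $\Lambda_F$ is precisely why $\Lambda_F'=\mathrm{Frac}(\Lambda_F)$ is the correct coefficient ring in the third variable of $\cR'$.

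Extending scalars to $\cR'$ and applying $e_{\mu_3}$ to $t(\mu_1,\mu_2)$ would then give
\[
e_{\mu_3}\cdot t(\mu_1,\mu_2)\;=\;\mathcal{L}_p(\mu_1,\mu_2,\mu_3)\cdot\mu_3,
\]
defining $\mathcal{L}_p(\mu_1,\mu_2,\mu_3)\in\cR'$. Pairing both sides with $\mu_3^\ast$ and using that $\mu_3^\ast$ kills every other Hecke system in the slope $\leq\underline{h}$ piece recovers the displayed ratio. At a classical triple $(x,y,z)$ avoiding the polar locus, the projector $e_{\mu_3}$, the Petersson pairing (interpolated in \cite[\S 10]{BM}), and the formation of $t$ all commute with specialization, so one recovers the analogous Petersson ratio for the classical specializations $\mu_x,\mu_y,\mu_z$.

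The main obstacle I expect is controlling the denominators produced by $e_{\mu_3}$ so that they amount to only finitely many irreducible elements of $\Lambda_F$, i.e. so that $\mathcal{L}_p$ truly lives in $\Lambda_F^1\hat\otimes\Lambda_F^1\hat\otimes\Lambda_F'$ and not in some wilder completion. This reduces to showing that the characteristic polynomial of $U_{\dP_0}$ on the slope $\leq\underline{h}$ subspace has coefficients in $\Lambda_F^1$, so that its non-$\mu_3$ factor vanishes on a proper Zariski-closed subset of $\dW$ in the $z$-variable; compactness of $U_{\dP_0}$ via \cite[Appendice B]{AIP-halo} provides exactly this. Once this is in place, the polar locus of $\mathcal{L}_p$ corresponds to collisions of the $\mu_3$-system with other finite-slope Hecke systems, and the remainder of the verification is parallel to the ordinary setting of \cite[Lemma 10.3]{BM}.
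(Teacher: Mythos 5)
Your proposal is correct and follows essentially the same route the paper takes: the lemma is stated by reference to \cite[Lemma 2.19]{DR14} and \cite[Lemma 10.3]{BM}, whose proofs extract the $\mu_3$-isotypic coefficient of the triple product via a Hecke-algebra projector and identify it with the Petersson ratio, exactly as you do with $e_{\mu_3}$. Your extra discussion of why $\Lambda_F'=\mathrm{Frac}(\Lambda_F)$ is the right third coefficient ring (denominators coming from the characteristic series of $U_{\dP_0}$ on the slope $\leq\underline{h}$ piece) is a useful elaboration of a point the cited proofs handle in the ordinary case by the much cheaper ordinary idempotent, and is the only genuinely new ingredient needed in the finite-slope setting; it is correctly resolved by the spectral theory of \cite[Appendice B]{AIP-halo} as you say.
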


\begin{defi}
Let $\mu_1,\mu_2,\mu_3$ as above and assume that they are test vectors for three families of eigenvectors. The functions $\mathcal{L}_p(\mu_1,\mu_2,\mu_3)\in\cR'$ introduced in \ref{l:construction} is called the \emph{triple product $p$-adic L-function} of $\mu_1,\mu_2,\mu_3$. 
\end{defi}

\subsection{Interpolation formula}

Let $(\underline{r}_1,\nu_1)\in \dW^1(\C_p)$, $(\underline{r}_2,\nu_2)\in \dW^1(\C_p)$ and $\underline{r}\in \dW(\C_p)$ be classical weights and put $\underline{k}_1=k(\underline{r}_1, \nu_1), \underline{k}_2=k(\underline{r}_2, \nu_2)$ and $\underline{k}_3=k(\underline{r}_3, \nu_1+\nu_2)$. We suppose that $(\underline{k}_1,\underline{k}_2,\underline{k}_3)$ is unbalanced at $\tau_0$ with dominant weight $\underline{k}_3$ in the sense of \cite[Definition 3.4]{BM}. 
 
 We write $(x, y, z) \in \dW^1\times\dW^1\times\dW$ for the point corresponding to the triple $(\underline{k}_1,\nu_1)$, $(\underline{k}_2,\nu_2)$ and $(\underline{k}_3, \nu_1+\nu_2)$. Since $\mu_3$ has finite slope, its specialization $\mu_z$ correspond to an automorphic form of weight $(\underline{k}_3,\nu_1+\nu_2)$. If  $\underline{k}_1$ and $\underline{k}_2$ are big enough the the same is true for $\mu_x$ and $\mu_y$, obtaining automorphic forms of weights $(\underline{k}_1,\nu_1)$ and $(\underline{k}_2, \nu_2)$ respectively. We denote by $\pi_{x}$, $\pi_y$ and $\pi_z$ the automorphic representations of $(B\otimes\A_F)^{\times}$ generated  by these automorphic forms, and $\Pi_{x}$, $\Pi_y$ and $\Pi_z$ the corresponding cuspidal automorphic representations of $\mathrm{GL}_2(\A_F)$. 

Assume that $\mu_i$ are eigenvectors for all the $U_\dP$ operators, namely $U_\dP\mu_i=\alpha_{i}^{\dP}\cdot\mu_i$, and write $\alpha_{x}^{\dP}$, $\alpha_{y}^{\dP}$ and $\alpha_{z}^{\dP}$ for the corresponding specializations at $x$, $y$ and $z$. 
Moreover, we assume that $\mu_{x}$ is the test vector defined in \cite{hsieh18} of the $\mathfrak{p}$-stabilization of the newform $\mu_{x}^\circ$ for each $\dP\mid p$, and similarly for $\mu_{y}$ and $\mu_z$. Write $\beta_{i}^{\dP}$ for the other eigenvalue of $U_\dP$ as usual, and write $\beta_{x}^{\dP}$, $\beta_{y}^{\dP}$ and $\beta_{z}^{\dP}$ for the corresponding specializations. 
The following result justify the name given to $\mathcal{L}_p(\mu_1,\mu_2,\mu_3)$.
\begin{thm}\label{t:interpolation} With the notations above we have:
$$\mathcal{L}_p(\mu_1,\mu_2,\mu_3)(x, y, z)= K(\mu_{x}^\circ,\mu_{y}^\circ,\mu_{z}^\circ)\cdot\left(\prod_{\dP\mid p}\frac{\mathcal{E}_{\dP}(x,y,z)}{\mathcal{E}_{\dP,1}(z)}\right)\cdot\frac{L\left(\frac{1-\nu_1-\nu_2-\nu_3}{2},\Pi_{x}\otimes\Pi_{y}\otimes\Pi_{z}\right)^{\frac{1}{2}}}{\langle\mu_{z}^\circ,\mu_{z}^\circ\rangle}$$
here  $K(\mu_1^\circ,\mu_2^\circ,\mu_3^\circ)$ is a non-zero constant, $\mathcal{E}_\dP(x,y,z)=$
\[
\left\{\begin{array}{lc}
\mbox{\small$(1-\beta_{x}^{\dP}\beta_{y}^{\dP}\alpha_{z}^{\dP}\varpi_{\dP}^{-\underline{m}_{\dP}-\underline{2}})(1-\alpha_{x}^{\dP}\beta_{y}^{\dP}\beta_{z}^{\dP}\varpi_{\dP}^{-\underline{m}_{\dP}-\underline{2}})(1-\beta_{x}^{\dP}\alpha_{y}^{\dP}\beta_{z}^{\dP}\varpi_{\dP}^{-\underline{m}_{\dP}-\underline{2}})(1-\beta_{x}^{\dP}\beta_{y}^{\dP}\beta_{z}^{\dP}\varpi_{\dP}^{-\underline{m}_{\dP}-\underline{2}})$},&\dP\neq\dP_0\\
\mbox{\small$(1-\alpha_{x}^{\dP_0}\alpha_{y}^{\dP_0}\beta_{z}^{\dP_0}p^{1-m_{0}})(1-\alpha_{x}^{\dP_0}\beta_{y}^{\dP_0}\beta_{z}^{\dP_0}p^{1-m_{0}})(1-\beta_{x}^{\dP_0}\alpha_{y}^{\dP_0}\beta_{z}^{\dP_0}p^{1-m_{0}})(1-\beta_{x}^{\dP_0}\beta_{y}^{\dP_0}\beta_{z}^{\dP_0}p^{1-m_{0}})$},&\dP=\dP_0
\end{array}\right., 
\]
\[
\mathcal{E}_{\dP,1}(z):=\left\{\begin{array}{lc} 
(1- (\beta_z^{\dP})^2\varpi_{\dP}^{-\underline{k}_{3,\dP}-\underline{2}})\cdot (1- (\beta_z^{\dP})^2\varpi_{\dP}^{-\underline{k}_{3,\dP}-\underline{1}}),&\dP\neq \dP_0,\\
(1- (\beta_z^{\dP_0})^{2}p^{-k_{3,\tau_0}})\cdot (1- (\beta_z^{\dP_0})^{2}p^{1-k_{3,\tau_0}}),&\dP= \dP_0,
\end{array}\right. 
\]
$m_{0}=\frac{k_{1,\tau_0}+k_{2,\tau_0}+k_{3,\tau_0}}{2}\geq 0$, and $\underline{m}_{\dP}=\frac{\underline{k}_{1,\dP}+\underline{k}_{2,\dP}+\underline{k}_{3,\dP}}{2}\in\Z[\Sigma_{\mathfrak{p}}]$.
\end{thm}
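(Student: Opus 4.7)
The strategy is to reduce the interpolation formula to the corresponding classical triple product computation, via the specialization property of the universal Gauss-Manin iteration $\bigtriangledown_{{\bf k}_{\tau_0,n_1}}^{{\bf k}_{\tau_0,n_3}}$ and then apply an Ichino-type formula as in \cite{BM} and \cite{hsieh18}. Starting from the defining formula of Lemma \ref{l:construction}, I must compute the specialization $t(\mu_1,\mu_2)_{(x,y,z)}$. By the construction of the triple product $t$, this specialization is built from $\Theta_{{\bf k}_{\tau_0,n_3}}(\mu_{1,\tau_0},\mu_{2,\tau_0})$ at the $\tau_0$-component and from the integration against $\underline{\Delta}^{\tau_0}$ (followed by the slope $\leq \underline{h}$ projection at the primes $\dP\neq\dP_0$) at the other components. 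By the interpolation property \eqref{espwtGM} combined with Lemma \ref{LemmaTripleClass}, the specialization of the $\tau_0$-part equals, up to a non-zero binomial constant, $e_{\leq h}\, t_{k_{1,\tau_0},k_{2,\tau_0},k_{3,\tau_0}}(\mu_{x}^{[p]},\mu_y)$, where $\mu_x^{[p]}=(1-V_{\dP_0}U_{\dP_0})\mu_x$ is the $p$-depletion at $\dP_0$. The specialization at the primes $\dP\neq\dP_0$ matches the classical construction used in \cite[\S 10]{BM}, since the integration against $\underline{\Delta}^{\tau_0}$ is defined in terms of universal characters that specialize without any analyticity obstruction.

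Once the classical specialization is identified, the quotient
\[
\frac{\langle\mu_z^{*},t(\mu_1,\mu_2)_{(x,y,z)}\rangle}{\langle\mu_z^{*},\mu_z\rangle}
\]
becomes a classical trilinear period on the Shimura curve attached to $B$. The next step is to transform this period into the square root of a central triple-product $L$-value. I would invoke the Ichino--Harris--Kudo formula in the shape used in \cite[\S 10.3]{BM} (which in turn relies on \cite{hsieh18}) to obtain
\[
\bigg|\langle \mu_z^{*},\,t_{\underline{k}}(\mu_x^{[p]},\mu_y)\cdot (\text{twists at }\dP\neq\dP_0)\rangle\bigg|^{2}
= C \cdot\!\!\!\prod_{\dP\mid p}\!\!I_\dP\cdot L\!\left(\tfrac{1-\nu_1-\nu_2-\nu_3}{2},\Pi_x\otimes\Pi_y\otimes\Pi_z\right),
\]
where $C$ is an explicit non-zero constant depending only on $(\mu_x^\circ,\mu_y^\circ,\mu_z^\circ)$ and the $I_\dP$ are the local integrals at the primes above $p$.

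The local integrals $I_\dP$ are precisely those computed in \cite{hsieh18} and \cite[\S 10]{BM} for the Hsieh test vectors; they produce the numerator $\mathcal{E}_\dP(x,y,z)$. More precisely, at $\dP\neq\dP_0$ the four factors of $\mathcal{E}_\dP$ come from the four-term product obtained from the matrix coefficient of the $\dP$-stabilization of $\Pi_x\otimes\Pi_y\otimes\Pi_z$ against the Hsieh test vectors, and at $\dP=\dP_0$ the $p$-depletion operator $(1-V_{\dP_0}U_{\dP_0})$ introduces the factor involving only $\beta_x^{\dP_0},\beta_y^{\dP_0}$ on the first Euler factor and the remaining three factors from the matrix coefficient analysis on the $p$-stabilization. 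The denominator $\mathcal{E}_{\dP,1}(z)$ comes from comparing the Petersson norm of the $p$-stabilized eigenform $\mu_z$ with that of the newform $\mu_z^\circ$: the standard computation of $\langle\mu_z^*,\mu_z\rangle/\langle\mu_z^\circ,\mu_z^\circ\rangle$ yields exactly this product of Euler-type correction factors at the primes above $p$.

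The main obstacle is ensuring that the finite slope projection $e_{\leq h}$ introduced in \eqref{Hdh} does not alter the interpolation property on the slope $\leq h$ eigencomponent of $\mu_z$. This follows from the fact that $\mu_z$ is itself a $U_{\dP_0}$-eigenform of slope $\leq h$ and that $H^{\dagger,\leq h}$ is the identity on the image of the overconvergent modular forms at the correct weight, by \eqref{HdRWFil}. With this verification, the computation in the ordinary case \cite[Theorem 10.5]{BM} transports verbatim to our finite slope setting, and the theorem follows.
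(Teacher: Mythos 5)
Your proposal follows essentially the same route as the paper: reduce to the classical triple product via \eqref{espwtGM} and Lemma \ref{LemmaTripleClass}, then invoke the argument of \cite[Theorem 10.5]{BM} (Ichino--Harris--Kudo formula with Hsieh's local computations) for the remaining period-to-$L$-value comparison. Your added remarks on how the local Euler factors and the norm-comparison factor $\mathcal{E}_{\dP,1}(z)$ arise, and on why the slope projector $e_{\leq h}$ is harmless against a slope-$\leq h$ eigenvector $\mu_z^*$, correctly unpack what the paper compresses into the single citation of \cite[Theorem 10.5]{BM}.
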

\begin{proof} By Equation \eqref{espwtGM} and Lemma \ref{LemmaTripleClass} we have:
\begin{eqnarray*}
\mathcal{L}_p(\mu_1,\mu_2,\mu_3)(x, y, z)&=&\frac{\left\langle\mu_z^*,t(\mu_{1},\mu_{2})_{(x,y,z)}\right\rangle}{\left\langle\mu_z^*,\mu_z\right\rangle}=\frac{\left\langle\mu_z^*,e_{\leq h}\Theta_{m_{3,\tau_0}}(\mu_x^{[p]},\mu_y)(\underline{\Delta}_{(x,y,z)}^{\tau_0})\right\rangle}{\left\langle\mu_z^*,\mu_z\right\rangle}\\
&=&(-1)^{m_{3,\tau_0}}\binom{k_{3,\tau_0}-2}{m_{3,\tau_0}+k_{2,\tau_0}-1}^{-1}\frac{\left\langle\mu_z^*,t_{k_{1,\tau_0},k_{2,\tau_0},k_{3,\tau_0}}(\mu_x^{[p]},\mu_y)(\underline{\Delta}_{(x,y,z)}^{\tau_0})\right\rangle}{\left\langle\mu_z^*,\mu_z\right\rangle}.
\end{eqnarray*}
At this point the proof of this result is identical to that of \cite[Theorem 10.5]{BM}.
\end{proof}

	
\Addresses	
	
\bibliographystyle{plain}
\bibliography{biblio}	

\begin{thebibliography}{10}

\bibitem{AI17}
Fabrizio Andreatta and Adrian Iovita.
\newblock Triple product $p$-adic $l$-functions associated to finite slope
  $p$-adic families of modular forms.
\newblock 2017.
\newblock Preprint.

\bibitem{AIP-halo}
Fabrizio Andreatta, Adrian Iovita, and Vincent Pilloni.
\newblock Spectral halo.
\newblock {\em Annales scientifiques de l'{\'{E}}cole Normale Sup\'{e}rieure},
  2017.

\bibitem{BM}
Daniel Barrera and Santiago Molina.
\newblock Triple product $p$-adic l-functions for shimura curves over totally
  real number fields.
\newblock {\em Preprint, https://arxiv.org/abs/1908.00091}.

\bibitem{BDR15}
Massimo Bertolini, Henri Darmon, and Victor Rotger.
\newblock Beilinson-{F}lach elements and {E}uler systems {II}: the {B}irch and
  {S}winnerton-{D}yer conjecture for {H}asse-{W}eil {$L$}-series.
\newblock {\em Journal of Algebraic Geometry}, 24:569--604, 2015.

\bibitem{BSV2}
Massimo Bertolini, Marco Seveso, and Rodolfo Venerucci.
\newblock Diagonal classes and the {B}loch-{K}ato conjecture.
\newblock {\em Munster Journal of Mathematics}, Special volume in honor of
  {C}hristophe.

\bibitem{brasca13}
Riccardo Brasca.
\newblock {$p$}-adic modular forms of non integral weight over {S}himura
  curves.
\newblock {\em Compos. Math.}, 149(1):32--62, 2013.

\bibitem{carayol86}
Henri Carayol.
\newblock {S}ur la mauvaise r\'{e}duction des courbes de {S}himura.
\newblock {\em Compos. Math.}, 59:151--230, 1986.

\bibitem{DR14}
Henri Darmon and Victor Rotger.
\newblock Diagonal cycles and euler systems {I}: A $p$-adic {G}ross-{Z}agier
  formula.
\newblock {\em Annales scientifiques de l'{\'{E}}cole Normale Sup\'{e}rieure},
  47(4):779--832, 2014.

\bibitem{DR17}
Henri Darmon and Victor Rotger.
\newblock Diagonal cycles and euler systems {II}: the {B}irch and
  {S}winnerton-{D}yer conjecture for {H}asse-{W}eil-{A}rtin {$L$}-series.
\newblock {\em Journal of the AMS}, 30(3):601--672, 2017.

\bibitem{ding17}
Yiwen Ding.
\newblock Formes modulaires {$p$}-adiques sur les courbes de {S}himura
  unitaires et compatibilit\'{e} local-global.
\newblock {\em M\'{e}moires de la Soc. Math. France}, 155, 2017.

\bibitem{Gra}
Giacomo Graziani.
\newblock Modular sheaves of de rham classes on hilbert formal modular schemes
  for unramified primes.
\newblock {\em https://arxiv.org/pdf/2007.15997.pdf}.

\bibitem{harris-tilouine01}
Michael Harris and Jacques Tilouine.
\newblock {$p$}-adic measures and square roots of special values of triple
  product {$L$}-functions.
\newblock {\em Math. Annalen}, 320:127--147, 2001.

\bibitem{hsieh18}
Ming-Lun Hsieh.
\newblock Hida families and {$p$}-adic triple product {$L$}-functions.
\newblock {\em American Journal of Mathematics}, 2018.

\bibitem{kassaei04}
Payman Kassaei.
\newblock {$\mathcal{P}$}-adic modular forms over {S}himura curves over totally
  real fields.
\newblock {\em Compos. Math.}, 140:359--395, 2004.

\bibitem{K81}
N.~Katz.
\newblock Serre-tate local moduli.
\newblock In Jean Giraud, Luc Illusie, and Michel Raynaud, editors, {\em
  Surfaces Alg{\'e}briques}, pages 138--202, Berlin, Heidelberg, 1981. Springer
  Berlin Heidelberg.

\bibitem{KLZ17}
Guido Kings, David Loeffler, and Sarah Zerbes.
\newblock Rankin--{E}isenstein classes and explicit reciprocity laws.
\newblock {\em Cambridge J. Math.}, 5(1), 2017.

\bibitem{LLZ16}
Antonio Lei, David Loeffler, and Sarah Zerbes.
\newblock Euler systems for {H}ilbert modular surfaces.
\newblock 2016.
\newblock Preprint.

\bibitem{LZ}
David Loeffler and Sarah Zerbes.
\newblock Rankin--{E}isenstein classes in {C}oleman families.
\newblock {\em Res. Math. Sci.}, 29 (Robert Coleman memorial volume)(3), 2016.

\end{thebibliography}
	
\end{document}